\newtheorem{theorem}{Theorem}[section]
\newtheorem{lemma}[theorem]{Lemma}
\newtheorem{cor}[theorem]{Corollary}
{\theoremstyle{definition}
\newtheorem{defin}[theorem]{Definition}

\newtheorem{rem}[theorem]{Remark}

{\theoremstyle{definition}
\newtheorem{exam}[theorem]{Example}
\newcommand{\mycomment}[1]{}

\usepackage{amsmath,amssymb, amsfonts}
\usepackage[margin=1.5in]{geometry}
\usepackage{mathrsfs}
\usepackage{enumerate}
\usepackage{graphicx}
\graphicspath{ {./images/} }
\usepackage{nicefrac}
\usepackage{setspace}
\usepackage{relsize}
\usepackage{hyperref}
\usepackage[numbers,sort&compress]{natbib}

\DeclareMathOperator{\interior}{int}

\title{Convergence of pushforward measures for certain countably piecewise linear Markov maps}
\author{Zoltán Kalocsai \thanks{Supported by the Eköp-24 University Excellence Scholarship Program of the Ministry for Culture and Innovation from the source of the National Research, Development and Innovation Fund. Supported by ELTE Eötvös Loránd University, Budapest, Hungary Faculty of Science}}
\date{ }

\begin{document}

\maketitle

\begin{abstract}
We study piecewise linear Markov maps, with countable Markov partitions, inspired by a problem of the Miklós Schweitzer competition in 2022. We introduce $\ell$-Markov partitions and apply ideas of symbolic dynamics to our systems, relating them to Markov shifts. We survey how the Frobenius--Perron operators of these systems can be represented by matrices, and adapt results to countable alphabets. We apply these statements to prove a convergence theorem on the pushforwards of absolutely continuous measures. This enables us to prove a variety of useful ergodic properties of our maps and study even non-$\sigma$-finite absolutely continuous invariant measures. We explain how our results are not implied by previous ones and apply the convergence theorem to solve the original problem in the competition. 
\end{abstract}

\renewcommand{\thefootnote}{}
\footnotetext{\textit{2020 Mathematics Subject Classification:} Primary: \textit{37A50} Secondary: \textit{37A05, 37A25, 37A30, 37A40, 37B10.}} 
\footnotetext{\textit{Key words and phrases:} Symbolic dynamics, Markov partition, Invariant measure, Transfer operator, Limit theorem.}

\tableofcontents

\section{Introduction}

Piecewise linear interval maps are one of the simplest discrete time dynamical systems. Thanks to their simple definition they are widely used for modelling phenomena, and by their research we can learn techniques widely applicable for more complex systems. Despite their apparent simplicity these systems can have extremely complex and unpredictable orbits. The study of the long-term stochastic behaviour of such constructions is thus relevant.\medskip

The absolutely continuous invariant measures, the convergence of pushforward measures and transfer operators for both linear and non-linear Markov interval maps are widely studied concepts. They are discussed in both textbooks and research papers such as \cite{Boyarsky, Lasota} and \cite{Bowen1979, sarigPF, Bugiel1985, Bugiel1991, Bugiel1998, Bugiel2021}. We introduce the necessary ideas for the discussion of these problems and expand on some of the known results in multiple ways.\medskip

In this work we consider dynamical properties of a class of real-to-real maps which have a countable Markov partition and are linear on the elements of the partition. These we call systems with an $\ell$-Markov partition. In Section \ref{sec1} we review the relevant theory of symbolic dynamics in our special case based on the notations and results of \cite{mineOfficial2}. We introduce the notion of expansivity, which is a weaker form of the 'expanding condition' appearing widely in literature, for example in \cite{Bugiel1998}. Statements on the topological conjugacy of the studied systems with shifts if expansivity is satisfied are listed.\medskip

In Section \ref{sec2} we describe how systems with $\ell$-Markov partitions relate to Markov chains with the help of the Frobenius--Perron operator. The core ideas in this section are generally known, the purpose of the section is to collect the relevant properties and show that they apply in our exact case as well. At the end of the section some relevant theorems on Markov chains are stated. With Corollary \ref{eloszlas} we lay the groundwork for our main results in the following section.\medskip

Most of our main, new results can be found in Section \ref{sec3}. Here we describe the absolutely continuous invariant measures of the studied systems. In Subsection \ref{convergence_theo} we state the main theorem, Theorem \ref{nagyhatar}. This result is unique in that it describes the convergence of the pushforward measures of all finite absolutely continuous measures for our systems. In Subsection \ref{prev_results} we describe the connection of this theorem to other known results with particular attention to Theorem 3.1 of \cite{Bugiel1998}. We show that Example \ref{savior} is a system for which our results apply, but previous known results do not.\medskip

In Subsection \ref{pos_recurrent} we prove some ergodic properties for systems associated with positively recurrent Markov chains. We describe all absolutely continuous invariant measures for these systems, considering infinite and non $\sigma$-finite measures as well, which are generally not considered in the literature.\medskip

The main inspiration behind this paper is the dynamical system presented in the third problem of the Miklós Schweitzer Competition 2022 organized by the János Bolyai Mathematical Society, proposed by my advisor Zoltán Buczolich. The website of the competition can be found at \cite{schweitz}. The problem was the following, with a bit modified notation:\medskip

\textit{
The transformation $T:\: [ 0,\infty ) \rightarrow [0, \infty)$ is linear on every positive integer interval, and its integer values are defined as follows:
\begin{equation}
T(n)=
	\begin{cases}
		0 & \text{if } 2|n\\
		4^{\ell}+1 & \text{if } 2 \nmid n,\, 4^{\ell - 1}\leq n < 4^{\ell} \: (\ell \in \mathbb{Z}^+). 
	\end{cases}
\end{equation}
Let $T^0(x)=x$ and $T^{n}(x) = T(T^{n-1}(x))$ for all $n>0$. Find the $\liminf_{k\rightarrow \infty}(T^k(x))$ and the $\limsup_{k\rightarrow \infty}(T^k(x))$ for Lebesgue almost every $x \in [0, \infty)$!\medskip
}

The proven Theorem \ref{nagyhatar} is a much stronger statement about wider class of piecewise linear transformations. This will imply that for almost every $x \in [0,\infty)$ the orbit is dense in $\mathbb{R}_{\geq 0}$, thus $\liminf_{k\rightarrow \infty}(T^k(x))=0$ and $\limsup_{k\rightarrow \infty}(T^k(x))=\infty$. We give our proof of these statements in Corollary \ref{solution}. In the last section the invariant measure of this specific system is also calculated. We list some preliminary definitions.\medskip

\begin{figure}[h]
\includegraphics[width= 10cm]{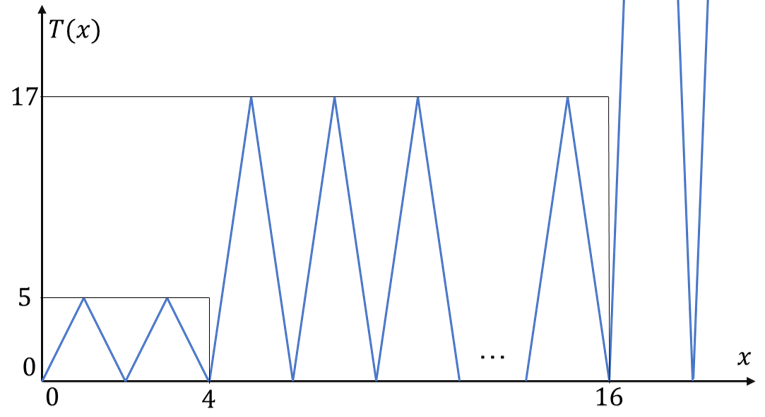}
\centering
\caption{The transformation $T$.}
\label{Fuggveny}
\end{figure}

\section{Preliminaries}
\begin{defin}
The function $\ell:\: \mathbb{R}_{\geq 0} \rightarrow \mathbb{N}$ is defined as follows:\[
\ell(x) = \inf\{n\in \mathbb{Z}^+:\: x \leq 4^n \}.
\]
\end{defin}

For a finite or infinite sequence $\underline{i} = (i_0, i_1, ...)$ we denote by $\underline{i}|n$ the finite sequence $(i_0, ..., i_n)$. For a Lebesgue-measurable subset $X$ of $\mathbb{R}^n$ we will denote the Lebesgue $\sigma$-algebra on $X$ with $\mathscr{L}(X)$. If a function $f$ is linear on the interior of an interval $H \subset \mathbb{R}$, then $f'(H)$ is $f'(x)$ for any $x\in \interior (H)$. For some sequence of reals $x_n$ and for some integers $a>b$ we define $\sum_{n=a}^b x_n = 0$ and $\prod_{n=a}^b x_n = 1$ as it is standard for the empty sum and product. Throughout this paper by almost every we mean Lebesgue almost every, and by measurable we mean Lebesgue-measurable. We denote the Lebesgue measure by $\lambda$. We say that a measure $\mu$ is absolutely continuous if $\lambda \gg \mu$. If $\mu$ is an absolutely continuous measure then by the density $\gamma$ of it we mean the function for which $\mu(H) = \int_H \gamma\, d\lambda$ for all $H \in \mathscr{L}(X)$, so $\gamma$ is its Radon-Nikodym derivative. The domain of the function $\gamma$ consists of the points of the measure space and its range is $\mathbb{R}_{\geq 0} \cup \{\infty \} = \overline{\mathbb{R}}_{\geq 0}$. By null set we mean sets of Lebesgue measure zero. We define $0\cdot \infty$ to be 0, so the measure of a null set is zero, even if the density is infinite everywhere.

\section{Preliminaries on symbolic dynamics} \label{sec1}
In this section we list some preliminary definitions and propositions explained in \cite{mineOfficial2}. Where appropriate we also cite relevant literature.
\subsection{$\ell$ - Markov partitions}
We are going to consider maps which possess $\ell$-Markov partitions defined in \cite{mineOfficial2}. Similar definitions also are present in \cite{Bowen1979, Bugiel1985, Bugiel1998}, and class $\mathcal{L}_M$ in section 9.1 of \cite{Boyarsky}. We use the notations of \cite{mineOfficial2}.\medskip

\begin{defin}[$\ell$-Markov partition, Definition 3.1. of \cite{mineOfficial2}] \label{Markovpart}
Given a measurable subset $X$ of the reals, which is a countable union of closed subintervals with non-empty interiors, and a measurable $f:\: X\rightarrow X$ the dynamical system $(X, f)$ has an $\ell$-Markov partition if there exists a countable set $\mathscr{H}$ of closed subintervals of $\mathbb{R}$ with non-empty interior such that the following are true:
\begin{enumerate}
\item For all $A, B \in \mathscr{H}$ if $A \neq B$ then $\interior (A) \cap \interior (B) = \emptyset$.
\item $X = \cup \mathscr{H}$.
\item For all $A \in \mathscr{H}$ there exists $\emptyset \neq \mathscr{A} \subseteq \mathscr{H}$ such that $\interior(f(A)) = \interior (\cup \mathscr{A})$.
\item The function $f$ is linear on the interior of all $H \in \mathscr{H}$.
\end{enumerate}
\end{defin}

\begin{defin}[Definition 3.2. of \cite{mineOfficial2}]
Let $(X,f)$ be a dynamical system with an $\ell$-Markov partition $\mathscr{H}$ as in Definition \ref{Markovpart}. Let the elements of $\mathscr{H}$ be $H_i$ where $i$ is a positive integer. Then for a sequence of positive integers of not neccessarily finite length $(i_0, i_1, ...)$ define:\[
H_{i_0i_1...} = \bigcap_{n=0}^{\infty} f^{-n}(H_{i_n}).
\]
If the sequence is finite then we only take finite intersections.
\end{defin}

\begin{defin}[Definition 3.3. of \cite{mineOfficial2}]
Let us define the set $D_0 = \{x \in X: \exists i \in \mathbb{Z}^+: x \in \partial H_i \}$. Let $D$ be the set $\bigcup_{k=0}^\infty f^{-k}(D_0)$. Let for all $A \subseteq X$ the set $\widetilde{A}$ be $A\setminus D$.
\end{defin}

\begin{cor}[Corollary 3.4. of \cite{mineOfficial2}]
The set $D$ is countable, thus $\lambda(A \Delta \widetilde{A})=0$ for all $A \in \mathscr{L}(X)$. On all $\widetilde{H_i}$ the function $f$ is a homeomorphism between $\widetilde{H_i}$ and $f(\widetilde{H_i})$.
\end{cor}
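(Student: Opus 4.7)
The three assertions chain together. I would first establish countability of $D$, from which the measure-theoretic statement follows immediately: $A \Delta \widetilde{A} = A \cap D \subseteq D$ is countable, hence a Lebesgue null set. The homeomorphism claim would then be handled separately, using the fact that $\widetilde{H_i}$ avoids all the bad points where $f$ could misbehave.

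For countability of $D$: the partition $\mathscr{H}$ is countable by hypothesis, and each $H_i$ is a closed interval with at most two boundary points, so $D_0 = \bigcup_i \partial H_i$ is countable. I would then prove by induction on $k$ that $f^{-k}(D_0)$ is countable; the inductive step reduces to showing that the preimage of a single point is countable. This is where the structure of the $\ell$-Markov partition enters: on each $\text{int}(H_i)$, $f$ is linear by condition~(4) of Definition~\ref{Markovpart}, and condition~(3) forces $\text{int}(f(H_i))$ to be nonempty (being the interior of a nonempty union of partition elements). Hence $f$ restricted to $\text{int}(H_i)$ is non-constant linear, therefore injective, so each $H_i$ contributes at most three preimage points of a given $y$ (one from the interior and at most two boundary points). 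Summing over the countably many $H_i$ gives countability of $f^{-1}(y)$, and $D = \bigcup_{k \geq 0} f^{-k}(D_0)$ is then a countable union of countable sets.

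For the homeomorphism claim: since $\partial H_i \subseteq D_0 \subseteq D$, we have $\widetilde{H_i} \subseteq \text{int}(H_i)$. By the injectivity observation above, $f$ restricted to $\text{int}(H_i)$ is an affine bijection with nonzero slope onto $f(\text{int}(H_i))$, and its inverse is affine as well, so this restriction is a homeomorphism. Restricting a homeomorphism to any subset of its domain yields a homeomorphism onto the corresponding image, so $f \colon \widetilde{H_i} \to f(\widetilde{H_i})$ is a homeomorphism.

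The substantive point is the injectivity argument underlying the preimage count, which must combine conditions~(3) and~(4) of Definition~\ref{Markovpart} to exclude constant linear pieces; the rest is bookkeeping, so I would not expect a serious obstacle.
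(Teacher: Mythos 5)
Your argument is correct and complete: countability of $D$ via the nonzero-slope injectivity on each $\operatorname{int}(H_i)$ (forced by conditions (3) and (4) of Definition \ref{Markovpart}), the null-set claim from $A \Delta \widetilde{A} \subseteq D$, and the homeomorphism claim from $\widetilde{H_i} \subseteq \operatorname{int}(H_i)$ are exactly the right steps. The paper itself imports this corollary from the cited reference without reproducing a proof, so there is nothing to compare against, but your proof is the natural one and I see no gap.
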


\begin{lemma}[Lemma 3.5. of \cite{mineOfficial2}] \label{Rendszer2}
Given a finite sequence of positive integers $(i_0,...,i_n)$ the following are true for $H_{i_0...i_n}$:
\begin{enumerate}
\item For all $H_i \in \mathscr{H}$ we have that $f(\interior (H_i)) = \interior (f(H_i))$.

\item $\interior (H_{i_0...i_n})$ is an interval.

\item If $\interior (H_i) \cap f^{-1}(H_j) \neq \emptyset$ then $\interior(H_j) \subseteq \interior (f(H_i))$.

\item If $H_{i_0...i_n}$ has a non-empty interior then $f^k(\interior (H_{i_0...i_n})) = \interior (H_{i_k...i_n})$ for all integers $1 \leq k\leq n$.

\item For all $(j_0...j_n) \neq (i_0...i_n)$ sequences of positive integers $\interior (H_{i_0...i_n}) \cap \interior (H_{j_0...j_n}) = \emptyset$.

\item $\interior (H_{i}) = \interior ( \bigcup \{ H_{ik}: \interior (H_k) \subseteq f(H_{i}) \})$.

\item $\interior (H_{i_0...i_n}) = \interior ( \bigcup \{ H_{i_0...i_nk}: \interior (H_k) \subseteq f(H_{i_n}) \})$.

\item For all $k\in \mathbb{Z}^+$ the intersection $\interior (H_{i_0...i_nk}) \cap  \interior (H_{i_0...i_n})$ is non-empty if and only if 
$k \in \{m:\interior(H_m) \subseteq f(H_{i_n}) \}$.

\item If $( \interior (H_{i_0...i_nk}) \cap \interior (H_{i_0...i_n}) ) \neq \emptyset$ then $\lambda(H_{i_0...i_nk}) = \frac{\lambda(H_{k})}{|f'(H_{i_n})|} \lambda(H_{i_0...i_n})$.
\end{enumerate}
\end{lemma}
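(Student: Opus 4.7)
The plan is to prove the nine parts in order, since each builds on its predecessors. Part 1 follows from noting that $\interior(f(H_i))$ is non-empty (being $\interior(\bigcup \mathscr{A})$ for a non-empty $\mathscr{A} \subseteq \mathscr{H}$ whose members have non-empty interior), so the linear function $f|_{\interior(H_i)}$ has non-zero slope and hence is an open map carrying the open interval $\interior(H_i)$ onto an open interval $I$. Since $f(H_i)$ differs from $I$ only by at most the two endpoint values $f$ assigns at $\partial H_i$, which cannot change the interior, we obtain $f(\interior(H_i)) = I = \interior(f(H_i))$. Part 2 is a straightforward induction on $n$ via $H_{i_0...i_n} = H_{i_0} \cap f^{-1}(H_{i_1...i_n})$: the preimage of an interval under an affine function restricted to an interval is an interval.

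Part 3 is the topological heart of the lemma. Given $x \in \interior(H_i) \cap f^{-1}(H_j)$, take an open neighborhood $U$ of $x$ in $\interior(H_i)$; by Part 1, $f(U)$ is an open interval contained in $\interior(f(H_i))$ and containing $f(x) \in H_j$, so $f(U) \cap \interior(H_j)$ is a non-empty open interval sitting inside $\interior(f(H_i)) = \interior(\bigcup \mathscr{A})$. Such an open interval cannot be covered by the countable union of the two-point boundary sets $\partial H_l$ for $H_l \in \mathscr{A}$, so it must meet $\interior(H_l)$ for some $H_l \in \mathscr{A}$. Interior disjointness between distinct partition elements then forces $l = j$, so $H_j \in \mathscr{A}$ and $\interior(H_j) \subseteq \interior(f(H_i))$.

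Parts 4--7 are essentially mechanical. For Part 4, I verify by induction on $k$ that $\interior(H_{i_0...i_n}) = \interior(H_{i_0}) \cap f^{-1}(\interior(H_{i_1...i_n}))$, using Part 3 to ensure $\interior(H_{i_1...i_n}) \subseteq \interior(f(H_{i_0}))$, and then iterate the affine bijection $f|_{\interior(H_{i_0})}$. Part 5 follows: if the interiors of $H_{i_0...i_n}$ and $H_{j_0...j_n}$ intersect, apply $f^k$ at the first differing index (using Part 4) and contradict interior disjointness of $H_{i_k}$ and $H_{j_k}$. For Part 6, any $x \in \interior(H_i)$ has a neighborhood $U$ in $\interior(H_i)$ with $f(U) \subseteq \interior(f(H_i)) \subseteq \bigcup\{H_k : \interior(H_k) \subseteq f(H_i)\}$, whence $U$ lies in the union of $H_{ik}$ over such $k$; the reverse inclusion is trivial. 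Part 7 is the same argument, transported along $f^n$ via Part 4.

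Part 8 combines the preceding work. Since $\interior(H_{i_0...i_nk}) \subseteq \interior(H_{i_0...i_n})$ is automatic, the stated intersection equals $\interior(H_{i_0...i_nk})$; if non-empty, Part 4 pushes it forward by $f^n$ to yield $\interior(H_{i_nk}) \neq \emptyset$, so Part 3 gives $\interior(H_k) \subseteq f(H_{i_n})$. Conversely, if $\interior(H_k) \subseteq f(H_{i_n})$, the linear bijection $f|_{\interior(H_{i_n})}$ onto $\interior(f(H_{i_n}))$ pulls $\interior(H_k)$ back to a non-empty open subset of $H_{i_nk}$, which Part 4 lifts through $f^n$ to give $\interior(H_{i_0...i_nk}) \neq \emptyset$. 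Finally, Part 9 is a length computation: iterating Part 4, the map $f^n$ restricted to $\interior(H_{i_0...i_n})$ is affine onto $\interior(H_{i_n})$ with absolute slope $\prod_{j=0}^{n-1} |f'(H_{i_j})|$, and likewise $f^{n+1}$ restricted to $\interior(H_{i_0...i_nk})$ is affine onto $\interior(H_k)$ with absolute slope $\prod_{j=0}^{n} |f'(H_{i_j})|$; comparing the resulting length identities produces the claimed scaling factor. The main obstacle throughout is bookkeeping, since ``$f$ is linear on $\interior(H_i)$'' says nothing about endpoint values and all arguments must stay in interiors; the one genuinely non-trivial ingredient is Part 3, where I rely on the countable/uncountable dichotomy between partition boundaries and open intervals to absorb $H_j$ into the partition $\mathscr{A}$ describing $\interior(f(H_i))$.
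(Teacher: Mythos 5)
The paper does not actually prove this lemma --- it imports it verbatim from \cite{mineOfficial2} --- so there is no in-paper argument to compare you against; I can only judge the outline on its own merits. On the whole it is sound and isolates the right load-bearing ideas: the countable-versus-uncountable argument in Part 3 (a nonempty open subinterval of $\interior(f(H_i))=\interior(\cup\mathscr{A})$ cannot be covered by $\bigcup_{H_l\in\mathscr{A}}\partial H_l$, so it meets some $\interior(H_l)$, and interior-disjointness forces $H_j\in\mathscr{A}$), and the identity $\interior(H_{i_0...i_n})=\interior(H_{i_0})\cap f^{-1}(\interior(H_{i_1...i_n}))$ driving Parts 4--8. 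One repair is needed in Part 2: your induction treats $H_{i_1...i_n}$ as an interval, but the inductive hypothesis only gives that its \emph{interior} is an interval; since $f$ is unconstrained at the countably many partition endpoints, $H_{i_0...i_n}$ is in general only (interval)$\,\cup\,$(countable set). Strengthen the induction to exactly that shape, and then note that $\interior(J\cup C)=\interior(J)$ for $J$ an interval and $C$ countable (an open interval inside $J\cup C$ cannot leave $\overline{J}$, since its part outside $\overline{J}$ would be a nonempty open subset of the countable set $C$).

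The more substantive issue is Part 9: your two length identities, $\lambda(H_{i_0...i_n})=\lambda(H_{i_n})/|s|$ and $\lambda(H_{i_0...i_nk})=\lambda(H_k)/\bigl(|s|\,|f'(H_{i_n})|\bigr)$ with $|s|=\prod_{j=0}^{n-1}|f'(H_{i_j})|$, combine to give $\lambda(H_{i_0...i_nk})=\frac{\lambda(H_k)}{\lambda(H_{i_n})\,|f'(H_{i_n})|}\lambda(H_{i_0...i_n})$, which carries an extra factor $\lambda(H_{i_n})^{-1}$ relative to the displayed formula. Your normalization is the one consistent with the transition probabilities \eqref{transprob} of Definition \ref{Transmatrix} (it reads $\lambda(H_{i_0...i_nk})=p_{i_nk}\,\lambda(H_{i_0...i_n})$), and the two expressions agree only when $\lambda(H_{i_n})=1$, as happens for the integer-interval partition of $(\mathbb{R}_{\geq 0},T)$. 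So the discrepancy is almost certainly a typo in the quoted statement rather than a flaw in your method, but you cannot claim that the identities ``produce the claimed scaling factor'' as written; either prove the corrected formula and say so, or point out the missing $\lambda(H_{i_n})$.
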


\subsection{Expansivity}
For the most important properties, we need some stronger assumptions. We refer to \cite{mineOfficial2}.\medskip

For $A,B \subseteq X$ we use the notation $A \Delta B = (A \setminus B) \cup (B \setminus A)$ for the symmetric difference.

\begin{defin}[Expansivity according to Definition 3.6. in \cite{mineOfficial2}] \label{expansdef}
The dynamical system $(X,f)$ is said to have an expansive $\ell$-Markov partition, if for all $\delta > 0$ and $I \subseteq X$ bounded interval there is a finite set of positive integer sequences of finite length $(i^{(1)}_0,..., i^{(1)}_{n_1}), ... , (i^{(N)}_0,..., i^{(N)}_{n_N})$ for which: \[
\lambda \Big( I \Delta \bigcup_{k=1}^N H_{i^{(k)}_0...i^{(k)}_{n_k}} \Big) < \delta.
\]
\end{defin}

\begin{theorem}[Equivalent definition of expansive $\ell$-Markov partition according to Theorem 3.10. in \cite{mineOfficial2}] \label{expans2}
The $\ell$-Markov partition $\mathscr{H}$ of the dynamical system $(X,f)$ is expansive if and only if the following criterion holds. For all $\{ i_n \}_{n=0}^\infty \in (\mathbb{Z}_{> 0} )^\mathbb{N}$ such that $\tilde{H}_{i_0 i_1 ...} \neq \emptyset$ we have the following:
\begin{equation} \label{shrinking}
0 = \lim_{n\to \infty} \lambda (H_{i_0 i_1 ... i_n})
\end{equation}
\end{theorem}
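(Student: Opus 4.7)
The plan is to prove the two directions separately, both relying on part 5 of Lemma \ref{Rendszer2} (disjointness of the interiors of distinct same-level cylinders) together with continuity of $f$ on cylinder interiors. These give every $x \in X \setminus D$ a unique itinerary $(i_k(x))_{k\geq 0}$ with $x \in \interior(H_{i_0(x) \ldots i_n(x)})$ for all $n$.

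For the forward direction I would argue by contradiction. Suppose a sequence $\{i_n\}$ has $\tilde H_{i_0 i_1 \ldots} \neq \emptyset$ but $\lambda(H_{i_0 \ldots i_n}) \not\to 0$; the limit $c \in (0, \infty]$ exists by monotonicity, and non-emptiness of $\tilde H$ forces $J := \bigcap_n H_{i_0 \ldots i_n}$ to be a non-degenerate interval of measure $c$. Fix a bounded subinterval $I' \subseteq \interior(J)$ of length $L$ with $0 < L < c$. The key observation is that any cylinder $H_{j_0 \ldots j_m}$ whose interior meets $I'$ must satisfy $(j_0, \ldots, j_m) = (i_0, \ldots, i_m)$ (by part 5 of Lemma \ref{Rendszer2}, applied using $I' \subseteq \interior(H_{i_0 \ldots i_m})$), and therefore contains $J \supseteq I'$. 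Consequently, for any finite cylinder union $U$ approximating $I'$, either no cylinder of $U$ meets $I'$ in positive measure---giving $\lambda(I' \Delta U) \geq L$---or some cylinder of $U$ contains $I'$ and hence has measure at least $c$, giving $\lambda(U \setminus I') \geq c - L$. Taking $\delta < \min(L, c-L)$ contradicts expansivity.

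For the reverse direction, fix $\delta > 0$ and a bounded interval $I \subseteq X$; I would apply a stopping-time argument. For almost every $x \in I$ (excluding the null set $\partial I \cup D \cup \partial X$), the shrinking hypothesis applied to the itinerary $(i_k(x))$---whose tilde-cylinder contains $x$ and is therefore non-empty---gives $\lambda(H_{i_0(x) \ldots i_n(x)}) \to 0$; since these nested closed intervals contain $x$ in their interiors, they shrink to $\{x\}$ and eventually lie inside $\interior(I)$. Hence the stopping time $n(x) := \min\{n : H_{i_0(x) \ldots i_n(x)} \subseteq I\}$ is well defined almost everywhere on $I$, and the cylinders $C(x) := H_{i_0(x) \ldots i_{n(x)}(x)}$ are all subsets of $I$. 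A short argument using part 5 of Lemma \ref{Rendszer2} together with the minimality of $n$ shows that distinct $C(x)$ have disjoint interiors, so the stopping cylinders form a countable family $\{D_k\}$ whose union covers $I$ up to measure zero; truncating to a finite subfamily with total measure exceeding $\lambda(I) - \delta$ yields the required approximation.

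The main obstacle I expect is the disjointness of stopping cylinders in the reverse direction: one must exclude not only same-level overlap (handled directly by part 5 of Lemma \ref{Rendszer2}) but also strict nesting between stopping cylinders of different depths, which is precisely where the minimality of $n(\cdot)$ is used, via a short itinerary-matching argument showing that any interior overlap forces the two depths to coincide and the itineraries to agree up to that depth.
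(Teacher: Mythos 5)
The paper itself gives no proof of this statement---it is imported verbatim as Theorem 3.10 of \cite{mineOfficial2}---so there is no in-paper argument to measure yours against. Judged on its own, your proof is sound and follows the natural route: a ``stuck cylinder'' contradiction for the forward direction and a Vitali-type stopping-time cover for the converse, with the itinerary-matching step correctly identified as the place where minimality of $n(\cdot)$ rules out strict nesting of stopping cylinders. Two details deserve explicit mention when writing this up. First, $H_{i_0\ldots i_n}$ is not itself an interval; only its interior is, and the cylinder may contain stray points of $D$ far from that interior (e.g.\ endpoints of $H_{i_0}$ whose images happen to land in the right partition elements). So statements like ``$J$ is a non-degenerate interval,'' ``the nested closed intervals shrink to $\{x\}$,'' and ``$C(x)\subseteq I$'' should be phrased for $\interior(H_{i_0\ldots i_n})$, with the observation that $\lambda\bigl(H_{i_0\ldots i_n}\setminus \interior(H_{i_0\ldots i_n})\bigr)=0$ because the difference lies in the countable set $D$; this costs nothing since expansivity is a statement about measure. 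Second, in the forward direction the limit $c$ may be $+\infty$ when partition elements are unbounded, and continuity from above then fails; here you genuinely need the hypothesis $\tilde H_{i_0i_1\ldots}\neq\emptyset$ to anchor the nested interiors $\interior(H_{i_0\ldots i_n})=(a_n,b_n)$ around a common point, after which $(\sup_n a_n,\inf_n b_n)$ is an interval of length at least $c$ inside $J$ and your choice of $I'$ proceeds as written (with $\lambda(U\setminus I')=\infty$ in the infinite case). Neither point is a gap in the idea, only in the bookkeeping.
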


We remark that if the measure of the sets of the partition is bounded and there is some $\varepsilon > 0$ that $|f'(H_i)|>1+\varepsilon$ for all $H_i \in \mathscr{H}$, then $\mathscr{H}$ is expansive.
 
\begin{defin}[Definition 3.7. of \cite{mineOfficial2}]
Let the symbolic topology $\tau_\sigma$ on $\widetilde{X}$ be the one generated by the empty set, and the elements of $\{\widetilde{H}_{i_0...i_n}: n \in \mathbb{Z}^+, (i_0,...,i_n)\in (\mathbb{Z}^+)^n\}$.
\end{defin}

\begin{theorem}[Theorem 3.8. of \cite{mineOfficial2}] \label{topologia}
If the $\ell$-Markov partition is expansive, then the symbolic topology is the standard subspace topology $\tau$ on $\widetilde{X}$ inherited from $\mathbb{R}$.
\end{theorem}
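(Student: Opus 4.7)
The plan is to prove both inclusions $\tau_\sigma \subseteq \tau$ and $\tau \subseteq \tau_\sigma$ separately, and only the second should require expansivity.

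For the inclusion $\tau_\sigma \subseteq \tau$, it suffices to exhibit each generator $\widetilde{H}_{i_0...i_n}$ as an open set in the subspace topology. I would argue that
\[
\widetilde{H}_{i_0...i_n} \;=\; \widetilde{X} \cap \interior(H_{i_0...i_n}).
\]
The inclusion ``$\subseteq$'' follows because the boundary points of $H_{i_0...i_n}$ are preimages under iterates of $f$ of partition endpoints, hence contained in $D$, so any point of $H_{i_0...i_n}\setminus D$ lies in $\interior(H_{i_0...i_n})$. The reverse inclusion is immediate. By Lemma~\ref{Rendszer2}(2), $\interior(H_{i_0...i_n})$ is an open interval in $\mathbb{R}$, so this set is open in the subspace topology $\tau$. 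This direction uses no expansivity.

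For the harder inclusion $\tau \subseteq \tau_\sigma$, I would fix $x \in \widetilde{X}$ and a standard neighborhood $U = (x-\varepsilon, x+\varepsilon)\cap \widetilde{X}$, and produce a cylinder set in $\tau_\sigma$ that contains $x$ and lies in $U$. Since $x \notin D$, in particular $x$ is in the interior of a unique $H_{i_0}$; by induction using parts (5) and (8) of Lemma~\ref{Rendszer2}, there is a \emph{unique} infinite sequence $(i_0, i_1, \ldots)$ with $x \in \widetilde{H}_{i_0...i_n}$ for every $n$. Because all endpoints of $H_{i_0...i_n}$ belong to $D$ while $x \notin D$, the point $x$ lies strictly in the interior of each interval $H_{i_0...i_n}$, and these intervals are nested as $n$ grows.

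Now I would invoke the expansivity characterization of Theorem~\ref{expans2}: since $\widetilde{H}_{i_0 i_1 \ldots} \ni x$ is non-empty, $\lambda(H_{i_0...i_n}) \to 0$. As $H_{i_0...i_n}$ is an interval, its length equals its Lebesgue measure, so for $n$ large enough $H_{i_0...i_n} \subseteq (x-\varepsilon, x+\varepsilon)$, giving $\widetilde{H}_{i_0...i_n} \subseteq U$, as required. The main obstacle to this argument is ensuring that the shrinking happens \emph{around} $x$ and not just that some measure tends to zero; this is handled cleanly by the fact that $x$ lies interior to each nested interval, so once the diameters tend to zero the intervals necessarily form a neighborhood basis at $x$. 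Everything else is essentially bookkeeping using the properties of $D$ and the parts of Lemma~\ref{Rendszer2} already at hand.
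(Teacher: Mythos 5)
Your argument is correct; note that this paper does not reprove Theorem~\ref{topologia} but only cites it from \cite{mineOfficial2}, so there is no in-paper proof to compare against, and I assess the proposal on its own terms. Both halves are sound: the identity $\widetilde{H}_{i_0...i_n} = \widetilde{X} \cap \interior(H_{i_0...i_n})$ follows exactly as you say, because a point of $H_{i_0...i_n}$ whose entire partial orbit $f^0(y),\dots,f^n(y)$ stays in the interiors of the corresponding $H_{i_k}$ is automatically an interior point of $H_{i_0...i_n}$ (linearity gives local continuity at each step), so every non-interior point lands in some $f^{-k}(\partial H_{i_k}) \subseteq D$; and the converse inclusion correctly combines the unique itinerary of $x \notin D$ with Theorem~\ref{expans2}. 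One small imprecision worth fixing: Lemma~\ref{Rendszer2}(2) only guarantees that $\interior(H_{i_0...i_n})$ is an interval, not that $H_{i_0...i_n}$ itself is one, so the phrase ``as $H_{i_0...i_n}$ is an interval, its length equals its Lebesgue measure'' should be replaced by the (equally immediate) observation that the open interval $\interior(H_{i_0...i_n})$ containing $x$ has length at most $\lambda(H_{i_0...i_n}) \to 0$, which already forces $\interior(H_{i_0...i_n}) \subseteq (x-\varepsilon,x+\varepsilon)$ for large $n$. With that cosmetic repair the proof is complete.
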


\begin{defin}[Definition 3.12. of \cite{mineOfficial2}]
Let the function $\underline{i}: \widetilde{X} \rightarrow \mathbb{Z}^\mathbb{N}$ be the itinerary of a point. For a point $x\in \widetilde{X}$ it gives the integer sequence $(i_0, ...)$ for which $x \in \widetilde{H}_{i_0...}$. 
\end{defin}

\begin{defin}[Definition 3.13. of \cite{mineOfficial2}] \label{trajspace}
We denote by $\Omega_f = \{\underline{i}(x):\: x \in \widetilde{X}\}$ the space of allowed symbolic trajectories. We define the metric $d: \Omega_f \times \Omega_f \rightarrow \widetilde{X}$ such, that $d((i_0,...), (j_0,...)) = 2^{-n}$ where $n$ is the smallest non-negative integer for which $i_n \neq j_n$.
\end{defin}

For all the remaining statements in this subsection it is assumed that we have an expansive $\ell$-Markov partition.

\begin{theorem}[Theorem 3.14. of \cite{mineOfficial2}] \label{Homeo}
The function $\underline{i}$ is a homeomorphism between $\widetilde{X}$ and $\underline{i}(\widetilde{X}) = \Omega_f$.
\end{theorem}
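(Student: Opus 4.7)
The plan is to verify that $\underline{i}$ is a continuous bijection from $\widetilde{X}$, endowed with the subspace topology inherited from $\mathbb{R}$, onto $\Omega_f$ carrying the metric topology of Definition \ref{trajspace}, and that its inverse is also continuous. Surjectivity is built into the definition $\Omega_f = \underline{i}(\widetilde{X})$, and well-definedness of $\underline{i}$ is immediate: since $x \in \widetilde{X}$ means that every iterate $f^n(x)$ avoids the countable set $D$ of boundaries and their preimages, each $f^n(x)$ lies in the interior of a unique $H_{i_n}$, so the sequence $(i_0, i_1, \ldots)$ is uniquely determined.

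The injectivity step is where expansivity does the real work. Suppose $\underline{i}(x) = \underline{i}(y) = (i_0, i_1, \ldots)$; then $x, y \in H_{i_0 \ldots i_n}$ for every $n$. By Lemma \ref{Rendszer2}(2) each $H_{i_0 \ldots i_n}$ is an interval, so its Lebesgue measure coincides with its diameter. The equivalent formulation of expansivity in Theorem \ref{expans2} gives $\lambda(H_{i_0 \ldots i_n}) \to 0$, whence $|x-y| \to 0$ and therefore $x = y$.

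For continuity of $\underline{i}$, the metric of Definition \ref{trajspace} makes the cylinders $C_n = \{\underline{j} \in \Omega_f : j_k = i_k(x) \text{ for all } k \leq n\}$ a neighbourhood base at $\underline{i}(x)$. Each preimage $\underline{i}^{-1}(C_n)$ equals the basic set $\widetilde{H}_{i_0(x) \ldots i_n(x)}$ of the symbolic topology $\tau_\sigma$, which by Theorem \ref{topologia} is open in the standard topology on $\widetilde{X}$. For continuity of $\underline{i}^{-1}$ it suffices to show that $\underline{i}$ is open; applying Theorem \ref{topologia} once more, the sets $\widetilde{H}_{i_0 \ldots i_n}$ form a basis for the standard topology on $\widetilde{X}$, and each of them is mapped by $\underline{i}$ to the cylinder $\{\underline{j} \in \Omega_f : j_k = i_k \text{ for } k \leq n\}$, which is open in $\Omega_f$.

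I do not expect any individual step to pose a serious difficulty: the genuinely analytic content, namely the shrinking of the intervals $H_{i_0 \ldots i_n}$ to points, is already isolated in Theorem \ref{expans2}, and the topological matching between the cylinder base on the sequence side and the interval base on the point side is already packaged in Theorem \ref{topologia}. The proof is essentially a bookkeeping exercise assembling these two facts together with Lemma \ref{Rendszer2}(2).
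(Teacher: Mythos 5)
The paper does not prove this theorem; it is imported verbatim as Theorem 3.14 of \cite{mineOfficial2}, so there is no in-paper argument to compare against. Your proof is the standard and correct one: injectivity from the shrinking of cylinder intervals via Theorem \ref{expans2}, and bicontinuity from the identification of the cylinder base with the sets $\widetilde{H}_{i_0\ldots i_n}$ via Theorem \ref{topologia}. The only point worth tightening is the injectivity step: Lemma \ref{Rendszer2}(2) says $\interior(H_{i_0\ldots i_n})$ is an interval, not $H_{i_0\ldots i_n}$ itself, so to convert ``$\lambda(H_{i_0\ldots i_n})\to 0$'' into ``$|x-y|\to 0$'' you should observe that $x,y\in\widetilde{H}_{i_0\ldots i_n}$ forces $f^k(x),f^k(y)\in\interior(H_{i_k})$ for all $k\le n$, whence $x,y\in\interior(H_{i_0\ldots i_n})$ (each branch of $f^k$ is a homeomorphism near $x$), and for that interval measure does equal diameter; this is a one-line repair, not a gap in the approach.
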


\begin{cor}[Corollary 3.15. of \cite{mineOfficial2}] \label{conju}
The dynamical systems $(\widetilde{X}, f)$ and $(\Omega_f, \sigma)$ are topologically conjugate.
\end{cor}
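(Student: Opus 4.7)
The plan is to reduce topological conjugacy to its two ingredients: a homeomorphism and an intertwining with the shift. Theorem \ref{Homeo} already supplies the homeomorphism $\underline{i}: \widetilde{X} \to \Omega_f$, so the only work left is to verify (i) that $f$ restricts to a well-defined self-map of $\widetilde{X}$ (and, correspondingly, that $\sigma$ restricts to a self-map of $\Omega_f$), and (ii) that $\underline{i} \circ f = \sigma \circ \underline{i}$ on $\widetilde{X}$.

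For (i), I would observe that $D = \bigcup_{k\geq 0} f^{-k}(D_0)$ satisfies $f^{-1}(D) = \bigcup_{k\geq 0} f^{-(k+1)}(D_0) \subseteq D$. Hence if $x \in \widetilde{X} = X \setminus D$ and $f(x)$ lay in $D$, then $x$ would lie in $f^{-1}(D) \subseteq D$, a contradiction. Thus $f|_{\widetilde{X}}: \widetilde{X} \to \widetilde{X}$ is well defined, and iterating, $f^n(\widetilde{X}) \subseteq \widetilde{X}$ for every $n \geq 0$.

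For (ii), I would simply unpack the itinerary pointwise. If $\underline{i}(x) = (i_0, i_1, i_2, \ldots)$, then by Definition \ref{trajspace} together with the formula for $H_{i_0 i_1 \ldots}$, we have $f^n(x) \in H_{i_n}$ for every $n \geq 0$; combined with $f^n(x) \in \widetilde{X}$ from (i), this upgrades to $f^n(x) \in \widetilde{H}_{i_n}$ for every $n \geq 0$. Reindexing, $f^n(f(x)) = f^{n+1}(x) \in \widetilde{H}_{i_{n+1}}$ for every $n \geq 0$, so $f(x) \in \widetilde{H}_{i_1 i_2 \ldots}$ and therefore $\underline{i}(f(x)) = (i_1, i_2, \ldots) = \sigma(\underline{i}(x))$. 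As a byproduct, this also confirms that $\sigma$ maps $\Omega_f$ into itself, closing the one loose end in (i).

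Combining (i), (ii) and Theorem \ref{Homeo} yields that $\underline{i}$ conjugates $f|_{\widetilde{X}}$ to $\sigma|_{\Omega_f}$, proving the corollary. I anticipate no substantive obstacle: all of the heavy lifting (expansivity, injectivity and bicontinuity of $\underline{i}$) is already packaged into Theorem \ref{Homeo}, and the remaining content is a syntactic unpacking of the definitions together with the forward invariance of $\widetilde{X}$ under $f$.
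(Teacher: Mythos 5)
Your proposal is correct and follows the route the paper clearly intends: the corollary is stated (citing the companion paper) as an immediate consequence of Theorem \ref{Homeo}, with the only remaining content being the forward invariance of $\widetilde{X}$ under $f$ and the intertwining identity $\underline{i}\circ f=\sigma\circ\underline{i}$, both of which you verify correctly from $f^{-1}(D)\subseteq D$ and the definition of $H_{i_0i_1\ldots}$. No gaps.
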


\begin{theorem}[Theorem 3.16. of \cite{mineOfficial2}]
If for some $x \in \widetilde{X}$ the orbit of $\underline{i}(x)$ is periodic for the shift operator then the orbit of $x$ is periodic as well.
\end{theorem}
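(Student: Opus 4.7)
The plan is to exploit the conjugacy already established in Corollary \ref{conju}: once one knows that $\underline{i}\circ f = \sigma\circ \underline{i}$ on $\widetilde{X}$ and that $\underline{i}$ is injective (Theorem \ref{Homeo}), periodicity under $\sigma$ transfers to periodicity under $f$ essentially for free. The only real verification needed is that the iterates $f^n(x)$ remain in $\widetilde{X}$, so that $\underline{i}$ may be applied to them and its injectivity invoked.

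First I would verify forward-invariance of $\widetilde{X}$ under $f$, i.e.\ $f(\widetilde{X})\subseteq \widetilde{X}$. This follows directly from the definition $D=\bigcup_{k\geq 0} f^{-k}(D_0)$: if $f(x)\in f^{-k}(D_0)$ for some $k\geq 0$ then $x\in f^{-(k+1)}(D_0)\subseteq D$, contradicting $x\in \widetilde{X}=X\setminus D$. Hence the whole forward orbit of any $x\in \widetilde{X}$ lies in $\widetilde{X}$ and the itinerary $\underline{i}(f^n(x))$ is defined for every $n\geq 0$.

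Next I would assume that $\underline{i}(x)$ is periodic under $\sigma$, i.e.\ there exists $n\geq 1$ with $\sigma^n(\underline{i}(x))=\underline{i}(x)$. Iterating the conjugacy relation $\underline{i}\circ f=\sigma\circ \underline{i}$ from Corollary \ref{conju} gives
\[
\underline{i}(f^n(x)) \;=\; \sigma^n(\underline{i}(x)) \;=\; \underline{i}(x).
\]
Since $\underline{i}$ is a homeomorphism between $\widetilde{X}$ and $\Omega_f$ by Theorem \ref{Homeo}, and in particular injective, this forces $f^n(x)=x$, which is exactly periodicity of the $f$-orbit of $x$.

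I do not foresee a real obstacle: the theorem is really a corollary of the conjugacy together with invariance of $\widetilde{X}$. The only point at which one must be a bit careful is the invariance check, because without it the expression $\underline{i}(f^n(x))$ would not a priori make sense and the injectivity argument would not apply. Everything else is a one-line consequence of Corollary \ref{conju} and Theorem \ref{Homeo}.
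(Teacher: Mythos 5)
Your proof is correct and is the natural argument: the paper does not reproduce a proof of this statement (it is imported from \cite{mineOfficial2}), but the route you take --- forward-invariance of $\widetilde{X}$, the relation $\underline{i}\circ f=\sigma\circ\underline{i}$ from Corollary \ref{conju}, and injectivity of $\underline{i}$ from Theorem \ref{Homeo} --- is exactly the intended one. The only point worth making explicit is that injectivity of $\underline{i}$ genuinely requires the standing expansivity assumption of that subsection, which you do invoke via Theorem \ref{Homeo}.
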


\subsection{Application to $(\mathbb{R}_{\geq 0}, T)$}

We list the statements in \cite{mineOfficial2} on $(\mathbb{R}_{\geq 0}, T)$. We call closed integer intervals the sets $I_n = [n-1, n]$ for any $n\in \mathbb{Z}^+$.

\begin{defin}[Definition 3.18. of \cite{mineOfficial2}] \label{symbolinterval}
Given a finite sequence of positive integers $(i_0,...,i_N)$ we denote $I_{i_0...i_N} = \bigcap_{n=0}^{N} T^{-n}(I_{i_n})$. Given an infinite sequence of positive integers $(i_0,...)$ we denote 
$I_{i_0...} = \bigcap_{n=0}^{\infty} T^{-n}(I_{i_n})$.
\end{defin}

\begin{lemma}[Lemma 3.19. of \cite{mineOfficial2}]\label{Rendszer}
Given a finite sequence of positive integers $(i_0,...i_n)$ the following are true for $I_{i_0...i_n}$:
\begin{enumerate}
\item The sets $I_{i_0...i_nk}$ where $k \in \mathbb{Z} \cap [1,4^{\ell(i_n)}+1]$ are placed in a row in increasing or decreasing order. (See Figure \ref{rendabra}.)

\item The above sets are in increasing order if and only if there is an even number of even numbers among $i_0 ... i_n$. (See Figure \ref{rendabra}.)
\end{enumerate}
\end{lemma}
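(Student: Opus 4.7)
The plan is to reduce both claims to a statement about the sign of the slope of $T^{n+1}$ on the interval $I_{i_0\ldots i_n}$. First I would verify the monotonicity of $T$ on a single integer interval $I_m=[m-1,m]$: from the defining formula for $T$ at integers one sees that if $m$ is odd then $T(m-1)=0$ and $T(m)=4^{\ell(m)}+1$, so $T|_{I_m}$ is strictly increasing, while if $m$ is even then $T(m-1)=4^{\ell(m-1)}+1$ and $T(m)=0$, so $T|_{I_m}$ is strictly decreasing. In both cases $T(I_m)=[0,4^{\ell(m)}+1]=\bigcup_{k=1}^{4^{\ell(m)}+1} I_k$, so the image is exactly the union of the first $4^{\ell(m)}+1$ closed integer intervals.

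Next I would invoke Lemma \ref{Rendszer2}: part (2) says $\interior(I_{i_0\ldots i_n})$ is an interval, and part (4) says $T^k(\interior(I_{i_0\ldots i_n}))=\interior(I_{i_k\ldots i_n})$ for $1\le k\le n$. In particular, by induction $T^{n+1}$ is linear on $\interior(I_{i_0\ldots i_n})$ with derivative equal to the product
\[
(T^{n+1})'=\prod_{j=0}^{n} T'(I_{i_j}).
\]
By the first paragraph, the sign of each factor is $+$ when $i_j$ is odd and $-$ when $i_j$ is even, so the overall sign is $(-1)^{\#\{j\le n:\, i_j \text{ even}\}}$. Moreover $T^{n+1}$ maps $\interior(I_{i_0\ldots i_n})$ homeomorphically onto $\interior([0,4^{\ell(i_n)}+1])$.

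Finally, since each $I_{i_0\ldots i_n k}$ is by definition $I_{i_0\ldots i_n}\cap T^{-(n+1)}(I_k)$, and since on $I_{i_0\ldots i_n}$ the map $T^{n+1}$ is an affine homeomorphism onto $[0,4^{\ell(i_n)}+1]$, the sets $I_{i_0\ldots i_n k}$ are precisely the preimages of the consecutively arranged intervals $I_1,I_2,\ldots,I_{4^{\ell(i_n)}+1}$ under this affine map. If the slope is positive (i.e.\ an even number of the $i_j$ are even) these preimages appear in $I_{i_0\ldots i_n}$ in the same left-to-right order, giving claim (1) in the increasing case and claim (2); if the slope is negative they appear in reversed order, giving the decreasing case of (1).

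The only real care needed is to ensure at each step that we are working on the interior of the cylinder so that the preimage description is unambiguous and the affine homeomorphism statement is valid; boundary points lie in the countable set $D$ of Corollary 3.4 and do not affect the ordering. I expect no substantive obstacle beyond this bookkeeping, since the parity computation of the sign of $(T^{n+1})'$ is the heart of the matter and follows immediately from the monotonicity of $T$ on each $I_m$.
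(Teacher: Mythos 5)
Your argument is correct: reducing both claims to the sign of $(T^{n+1})'$ on the cylinder, computed as the product of the slopes of $T$ on $I_{i_0},\dots,I_{i_n}$, is exactly the natural route, and the paper itself only imports this lemma from the cited reference without reproducing a proof, so there is nothing to contrast it with here. The only detail worth making explicit is the step where you pass from $T(m-1)=4^{\ell(m-1)}+1$ (for $m$ even) to the image being $[0,4^{\ell(m)}+1]$: this needs $\ell(m-1)=\ell(m)$, which holds because $\ell$ could only jump at $m-1=4^j$, forcing $m=4^j+1$ to be odd — a one-line check, not a gap.
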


\begin{figure}[h]
\includegraphics[width= 10cm]{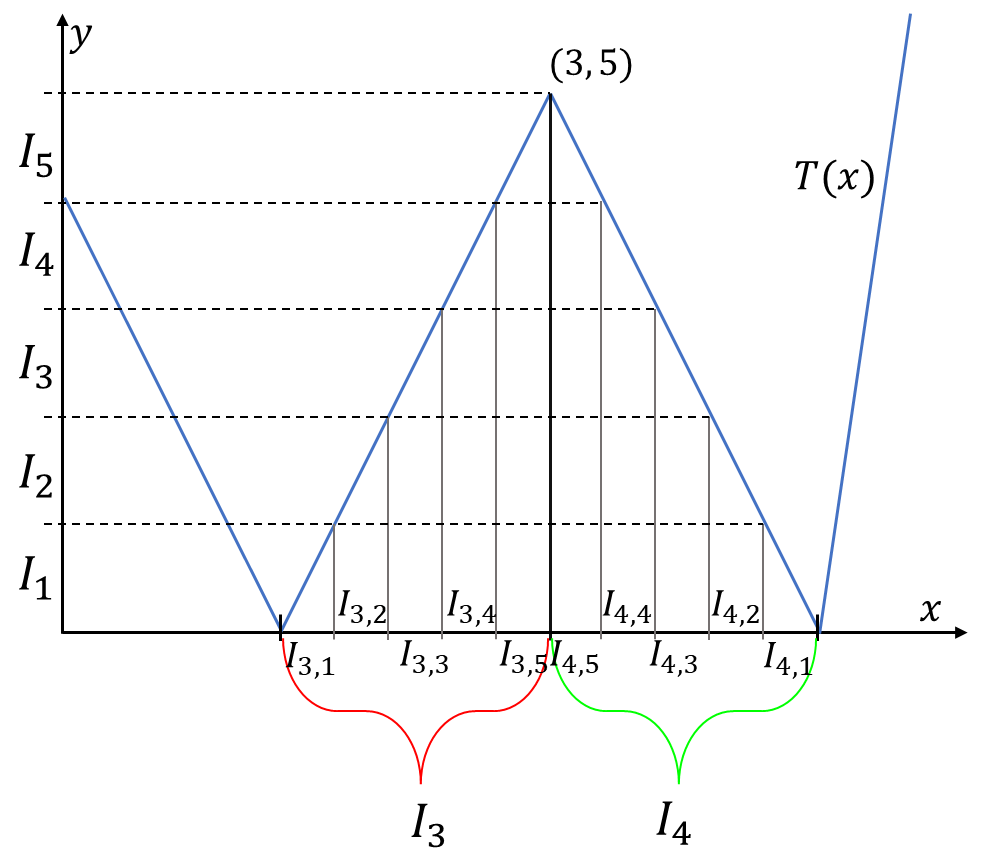}
\centering
\caption{Some labelled intervals of $\mathbb{R}_{\geq 0}$.}
\label{rendabra}
\end{figure}

\begin{theorem} [Theorem 3.20. of \cite{mineOfficial2}]
The set of integer intervals forms an expansive $\ell$-Markov partition of $(\mathbb{R}_{\geq 0}, T)$.
\end{theorem}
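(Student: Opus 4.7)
My plan is to verify directly the four axioms of Definition \ref{Markovpart} and then to obtain expansivity via the remark immediately following Theorem \ref{expans2}.

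Three of the four $\ell$-Markov axioms are essentially immediate: the intervals $I_n = [n-1,n]$ for $n \in \mathbb{Z}^+$ have pairwise disjoint interiors, their union is $\mathbb{R}_{\geq 0}$, and $T$ is linear on $\interior(I_n)$ by definition of $T$. The real work is in axiom (3), the Markov condition. My approach there is to use that $T$ is linear on $I_n$, so $T(I_n)$ is the closed interval between $T(n-1)$ and $T(n)$, and then to split on the parity of $n$: for odd $n$ the point $n-1$ is even so $T(n-1) = 0$ and $T(n) = 4^{\ell}+1$ for the unique $\ell \geq 1$ with $4^{\ell-1} \leq n < 4^{\ell}$, while for even $n$ we have $T(n) = 0$ and $T(n-1) = 4^{\ell'}+1$ for the corresponding $\ell'$. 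In both cases $T(I_n) = [0, 4^L + 1]$ for a positive integer $L$, which is exactly the union $I_1 \cup \cdots \cup I_{4^L + 1}$; so (3) holds with $\mathscr{A} = \{I_1, \ldots, I_{4^L + 1}\}$.

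For expansivity, the plan is to apply the remark after Theorem \ref{expans2}, which says it suffices to have $\sup_n \lambda(I_n) < \infty$ together with $\inf_n |T'(I_n)| > 1 + \varepsilon$ for some $\varepsilon > 0$. The former is trivial since $\lambda(I_n) = 1$ for every $n$. For the latter, since $T$ is linear on $I_n$ and carries it onto $[0, 4^L + 1]$ with $L \geq 1$, the absolute slope is $|T'(I_n)| = 4^L + 1 \geq 5$, which gives the required uniform lower bound with $\varepsilon = 4$.

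I do not expect a genuine obstacle. The only routine care needed is confirming $L \geq 1$ in both parity cases, in particular in the boundary instances $n = 1$ and $n = 2$, which is immediate from the definition of $\ell$. The whole argument is essentially a direct verification of the four partition axioms followed by an invocation of the uniform-expansion remark.
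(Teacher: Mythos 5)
Your proposal is correct and is essentially the intended argument: a direct verification of the four axioms of Definition \ref{Markovpart} (with axiom (3) handled by the parity case split showing $T(I_n)=[0,4^L+1]=I_1\cup\dots\cup I_{4^L+1}$), followed by expansivity via the remark after Theorem \ref{expans2}, using $\lambda(I_n)=1$ and $|T'(I_n)|=4^L+1\geq 5$. The paper itself merely cites the companion paper (and, in Section \ref{sec4}, points to Lemma \ref{Rendszer}), so your self-contained check is exactly what that deferral amounts to.
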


\begin{theorem} [Theorem 3.21. of \cite{mineOfficial2}]
The space $\Omega_T$ is composed of exactly those sequences $(i_0, i_1, ...) \in (\mathbb{Z}_{>0})^\mathbb{N}$, which satisfy the following conditions:
\begin{enumerate}
\item For all $n > 0$ we have that $i_{n+1} \in \{1, 4^{\ell(i_n)}+1\}$
\item There exists no $n \in \mathbb{N}$ such that $\sigma^n((i_0, i_1, ...)) = (1,1,1,...)$ or $\sigma^n((i_0, i_1, ...)) = (4^{\ell(i_{n-1})}+1, 4^{\ell(i_{n-1})+1}+1, 4^{\ell(i_{n-1})+2}+1, ...)$ 
\end{enumerate}
\end{theorem}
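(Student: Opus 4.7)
The plan is to prove the characterization in both directions, exploiting the explicit formula for $T$, the $\ell$-Markov structure from Lemma~\ref{Rendszer} and Lemma~\ref{Rendszer2}, and the expansivity of the integer partition established in the preceding theorem.

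For the forward implication, assume $(i_0, i_1, \ldots) \in \Omega_T$, so some $x \in \widetilde{\mathbb{R}_{\geq 0}}$ has $T^n(x) \in I_{i_n}$ for every $n$. Condition~1 is immediate from the direct computation $T(I_{i_n}) = [0,\,4^{\ell(i_n)}+1] = I_1 \cup \cdots \cup I_{4^{\ell(i_n)}+1}$, which forces $i_{n+1}$ into the admissible index range described in Lemma~\ref{Rendszer}(1). For condition~2, since $\Omega_T$ is $\sigma$-invariant (because $\widetilde{\mathbb{R}_{\geq 0}}$ is $T$-invariant), it suffices to exhibit that none of the forbidden candidate sequences themselves belongs to $\Omega_T$. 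For $(1,1,1,\ldots)$ this is a one-line computation: since $T|_{I_1}(x) = 5x$, we see $I_{1,1,\ldots} = \{0\} \subseteq D_0$. For each of the sequences $(4^a+1,\, 4^{a+1}+1,\ldots)$ with $a \geq 1$, a parallel calculation on the linear branches $T|_{I_{4^a+1}}$, $T|_{I_{4^{a+1}+1}},\ldots$ shows the nested intervals contract to the single boundary point $\{4^a+1\} \subseteq D_0$.

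For the reverse implication, assume $(i_0, i_1, \ldots)$ satisfies both conditions. Condition~1 together with Lemma~\ref{Rendszer2}(8) makes $I_{i_0 \ldots i_n}$ a nonempty closed interval of positive Lebesgue measure for every $n$, so the nested intersection is nonempty; by expansivity and Theorem~\ref{expans2} we have $\lambda(I_{i_0 \ldots i_n}) \to 0$, and the intersection is therefore a singleton $\{x\}$. The remaining task is to show $x \notin D$. Suppose for contradiction that $k$ is minimal with $m := T^k(x) \in \mathbb{Z}_{\geq 0}$. If $m = 0$, then $T^{k+j}(x) = 0$ for all $j \geq 0$, and since $0 \in I_1$ is the only interval of $\mathscr{H}$ containing $0$, this forces $\sigma^k(i_0, i_1, \ldots) = (1,1,\ldots)$. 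If $m \geq 2$ is even, the orbit reaches $0$ one step later, yielding $\sigma^{k+1}(i_0, i_1, \ldots) = (1,1,\ldots)$. If $m \geq 1$ is odd, the orbit of $m$ is $m,\, 4^{\ell(m)}+1,\, 4^{\ell(m)+1}+1,\ldots$; an induction using condition~1 to discard the larger of the two candidate indices at each step (relying on the identity $\ell(m+1) = \ell(m)$ for odd $m \geq 1$) shows $i_{k+j} = 4^{\ell(m)+j-1}+1$ for every $j \geq 1$, regardless of whether $i_k = m$ or $i_k = m+1$. Therefore $\sigma^{k+1}(i_0, i_1, \ldots) = (4^{\ell(i_k)}+1,\, 4^{\ell(i_k)+1}+1,\ldots)$. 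All three cases contradict condition~2.

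The main obstacle is precisely this last, odd-$m$ case. Because the integer $T^{k+j}(x)$ lies in the intersection of two adjacent intervals of $\mathscr{H}$, each symbol $i_{k+j}$ has two a priori legitimate values, and one must use condition~1 to discard the larger option at every step. The identity $\ell(m+1) = \ell(m)$ for odd $m \geq 1$ is what makes the argument uniform, ensuring that the tail $(i_{k+1}, i_{k+2}, \ldots)$ takes the forbidden form irrespective of the two possible choices for $i_k$.
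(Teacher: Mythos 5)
This theorem is imported from the companion paper \cite{mineOfficial2} and is stated here without proof, so there is no in-text argument to compare against; judged on its own, your proof is correct and uses what are surely the intended ingredients. The forward direction via $\sigma$-invariance of $\Omega_T$ together with the explicit collapse of $I_{11\ldots}$ to $\{0\}$ and of $I_{(4^a+1)(4^{a+1}+1)\ldots}$ to $\{4^a+1\}$ (both points of $D_0$), and the reverse direction via compact nesting plus a case analysis on the first integer in the orbit, fit together correctly. Two small points are worth tightening. First, condition~1 must be read as $i_{n+1}\in[1,4^{\ell(i_n)}+1]\cap\mathbb{Z}$ rather than as a two-element set (the transition matrix of $(\mathbb{R}_{\geq 0},T)$ given in Section~\ref{sec4} confirms this reading, which is the one you adopt). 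Second, Theorem~\ref{expans2} as quoted only asserts $\lambda(I_{i_0\ldots i_n})\to 0$ for sequences already known to satisfy $\widetilde{I}_{i_0\ldots}\neq\emptyset$, which is precisely what you are trying to establish; to avoid this apparent circularity, either invoke Lemma~\ref{Rendszer2}(9) directly (every slope is at least $5$, so the lengths contract geometrically), or observe that if the nested intersection were a nondegenerate interval it would contain points outside the countable set $D$ and the conclusion would follow at once. Your treatment of the crux --- the odd-$m$ case --- is right: since $\ell(4^a+1)=\ell(4^a+2)=a+1$ (and more generally $\ell(m+1)=\ell(m)$ for odd $m$, as $4^{\ell}$ is never odd), condition~1 caps $i_{k+j+1}$ at $4^{a+1}+1$ whichever of the two ambiguous symbols was chosen at step $k+j$, so the tail is forced into the forbidden form and condition~2 is violated.
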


\section{Frobenius--Perron operator, $\ell$-Markov partitions and Markov chains} \label{sec2}

In this section we discuss the matrix representation of the Frobenious-Perron operator for maps with an $\ell$-Markov partition. We also discuss how $\ell$-Markov partitions relate to Markov-chains through this representation. At the end of the chapter we cite relevant results on Markov chains. Most ideas collected in this section are present in literature in similar forms, as they will be cited accordingly. The aim of this chapter is to collect all the relevant information in the form we want to apply it.

\begin{defin}
Given a measure space $(X, \Sigma, \mu)$ and a map $f:\: X \rightarrow X$, the pushforward of $\mu$ is $\mu \circ f^{-1}$.
\end{defin}

\begin{defin}[Assumptions] \label{majdnemdif}
Let our measure space $X$ be some countable union of closed subintervals of the reals and the $\sigma$-algebra be the corresponding Lebesgue $\sigma$-algebra. We assume that $f$ is differentiable everywhere but at countably many points. Given a density $\rho :\: X \rightarrow \overline{\mathbb{R}}_{\geq 0}$ let $\mu(H) = \int_H \rho\, d\lambda$ for all $H \in \mathscr{L}( X )$ meaning that $\mu \ll \lambda$. In that case the $(\mu \circ f^{-1})(H) = \int_{f^{-1}(H)} \rho\, d\lambda$.
\end{defin}

\begin{defin}
Let $D(f)$ be the domain of the function $f$ and $R(f)$ be its range. We suppose that $D(f), R(f) \subseteq \mathbb{R}$. The Frobenius--Perron operator for transformation $f:\: D(f) \rightarrow R(f)$ maps the space of measurable functions $\mathbb{R} \rightarrow \overline{\mathbb{R}}_{\geq 0}$ which are supported on $D(f)$ into the space of measurable functions $\mathbb{R} \rightarrow \overline{\mathbb{R}}_{\geq 0}$, which are supported on $R(f)$. The operator is denoted by $\mathscr{P}_f$. If $\mu(H) = \int_H \rho d\lambda$, then the function $\mathscr{P}_f(\rho)$ is the density of the pushforward measure: \[
\mu(f^{-1}(H)) = \int_{f^{-1}(H)} \rho\, d\lambda = \int_{H} \mathscr{P}_f(\rho)\, d\lambda .
\]
\end{defin}

Let $D(f)$ be an interval. If $f$ is a diffeomorphism from its domain to its range, then the operator on the range can be represented in a more compact way:
\begin{equation} \label{egyszepPerron}
\mathscr{P}_f(\rho)(x) = \frac{\rho(f^{-1}(x))}{|f'(f^{-1}(x))|}.
\end{equation} 
This representation comes from the formula for change of variables. It is widely used in literature. For example it is used on page 86. of \cite{Boyarsky} and it is stated 
by equation (3.2.7) of \cite{Lasota}.\medskip 

We consider functions, that have an $\ell$-Markov partition, for which $\lambda(H_i) < \infty$ for all $H_i \in \mathscr{H}$. If we restrict such a function to $\interior (H)$ for an $H \in \mathscr{H}$ we get that $f|_{\interior (H)}$ is a diffeomorphism between $\interior (H)$ and $f(\interior (H))$. Let $\mathscr{H} = \{H_i\}_{i\in \mathbb{Z}^+}$. Suppose that $\mu(A) = \int_A \rho d\lambda$ for all $A\in \mathscr{L}(X)$. Then from the $\sigma$-additivity of measures we have, that $\mu(A) = \sum_{i=1}^\infty \mu(A \cap H_i)$, and since $\mu$ is absolutely continuous and $\mu(H_i) = \mu(\interior (H_i))$, thus $\mu(A) = \sum_{i=1}^\infty \mu(A \cap \interior (H_i))$. Thus we have the following:
\begin{equation}
\mu(f^{-1}(A)) = \sum_{i=1}^\infty \int_{f^{-1}(A)\cap \interior (H_i)} \rho\, d\lambda = \sum_{i=1}^\infty \int_{(f|_{\interior (H_i)})^{-1}(A)} \rho \cdot \chi_{\interior (H_i)}\, d\lambda 
\end{equation}
\begin{equation}
\mu(f^{-1}(A)) = \sum_{i=1}^\infty \int_A \mathscr{P}_{f|_{\interior (H_i)}}(\rho \cdot \chi_{\interior (H_i)})\, d\lambda.
\end{equation}

According to Beppo Levi's theorem as stated in Corollary 21.39.b. of \cite{Eric}, we have that:
\begin{equation} \label{aaaqa}
\mu(f^{-1}(A)) = \sum_{i=1}^\infty \int_A \mathscr{P}_{f|_{\interior (H_i)}}(\rho \cdot \chi_{\interior (H_i)}))\, d\lambda = \int_A \sum_{i=1}^\infty \mathscr{P}_{f|_{\interior (H_i)}}(\rho \cdot \chi_{\interior (H_i)}))\, d\lambda.
\end{equation}
We have that $f|_{\interior (H_i)}$ is a diffeomorphism between its domain and its range. This along with \eqref{egyszepPerron} and  \eqref{aaaqa} implies the following:
\begin{equation} \label{szepPerron}
\mathscr{P}_f(\rho)(x) = \sum_{i=1}^\infty \mathscr{P}_{f|_{\interior (H_i)}}(\rho \cdot \chi_{\interior (H_i)})(x) = \sum_{z \in f^{-1}(x)} \frac{\rho(z)}{|f'(z)|}.
\end{equation}

This representation for finite partitions is present in (4.3.4) in \cite{Boyarsky}, however with the use of Beppo Levi's theorem it generalizes in a straightforward way to our countably infinite case, as demonstrated before.\medskip

This representation does not make sense if $f^{-1}(x)$ has points where $f$ is not differentiable. We assumed in Definition \ref{majdnemdif} that $f$ is differentiable at everywhere but countably many points. Since for all $z\in X$, if $z \in f^{-1}(x) \cap f^{-1}(y)$, where $x,y\in X$, then $x=y$. This means that the points, where $f$ is not differentiable can only be included in the preimages of countably many points, thus \eqref{szepPerron} defines the new density almost everywhere, thus its integral will be well-defined.\medskip

According to Proposition 4.2.1, 4.2.3 and 4.2.4 of \cite{Boyarsky} the Frobenious-Perron operator is linear, the image of the density of a probability measure is itself a density of a probability measure and the operator is continuous in the $L_1$ norm. The proofs of these statements work word for word in our case as well.

\begin{defin} \label{Transmatrix}
Let $(X, f)$ be a dynamical system with an $\ell$-Markov partition $\{H_k\}_{k\in \mathbb{Z}^+}= \mathscr{H}$, such that for all $H_k \in \mathscr{H}$ we have that $\lambda(H_k) < \infty$.  We say that $P = (p_{ij})$ is its transition matrix if the following are satisfied:
\begin{enumerate}
\item If $n = |\mathscr{H}|$ then $P$ is an $n\times n$ matrix, where $n\in \mathbb{Z^+}$. If $\mathscr{H}$ is countably infinite, then $P=(p_{ij})_{i,j\in \mathbb{Z^+}}$.
\item For $i,j$ where $\interior (H_j) \subseteq f(H_i)$ it is true that 
\begin{equation} \label{transprob}
p_{ij} = \frac{\lambda(H_j)}{\lambda(H_i) |f'(H_i)|}
\end{equation}
where $f'(H_i)$ is the slope of $f$ on the interior of $H_i$. Otherwise $p_{ij}$ is zero. 
\end{enumerate}
\end{defin}

For most statements we assume that a partition is countably infinite. In all cases the statements and their proofs can trivially be modified for finite partitions.

\begin{defin}
An infinite, non-negative matrix is row stochastic if for any row the sum of the first $n$ elements of it tends to one as $n$ tends to infinity.
\end{defin}

\begin{cor}
The transition matrix is always row-stochastic.
\end{cor}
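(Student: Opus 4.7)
The plan is to fix a row $i$ and compute the sum $\sum_{j=1}^\infty p_{ij}$ directly from the definition, and then observe that because all entries are non-negative, the partial sums monotonically increase to this value, which is what row-stochasticity requires.

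First I would note that $p_{ij}$ is non-zero only for those $j$ with $\interior(H_j) \subseteq f(H_i)$. By condition (3) of Definition \ref{Markovpart}, there exists $\mathscr{A} \subseteq \mathscr{H}$ such that $\interior(f(H_i)) = \interior(\cup \mathscr{A})$, and by condition (1) the interiors of distinct elements of $\mathscr{H}$ are disjoint; so the indices contributing to the sum are exactly those $j$ with $H_j \in \mathscr{A}$. Pulling the common factor out, the row sum becomes
\[
\sum_{j=1}^\infty p_{ij} \;=\; \frac{1}{\lambda(H_i)\,|f'(H_i)|} \sum_{H_j \in \mathscr{A}} \lambda(H_j).
\]

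Next I would evaluate $\sum_{H_j \in \mathscr{A}} \lambda(H_j)$. Since the elements of $\mathscr{A}$ have pairwise disjoint interiors and differ from their interiors only on a set of Lebesgue measure zero, countable additivity gives $\sum_{H_j \in \mathscr{A}} \lambda(H_j) = \lambda(\cup \mathscr{A}) = \lambda(\interior(\cup\mathscr{A})) = \lambda(\interior(f(H_i))) = \lambda(f(H_i))$. Because $f$ is linear on $\interior(H_i)$ with slope $f'(H_i) \neq 0$, the image $f(\interior(H_i))$ is an interval of length $|f'(H_i)|\,\lambda(H_i)$, so $\lambda(f(H_i)) = |f'(H_i)|\,\lambda(H_i)$. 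Substituting, the full row sum equals $1$.

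Finally, since every $p_{ij}$ is non-negative, the partial sums $\sum_{j=1}^n p_{ij}$ form a monotone non-decreasing sequence bounded by $1$ and converging to $\sum_{j=1}^\infty p_{ij} = 1$ by the monotone convergence theorem for series of non-negative terms, independently of the enumeration of columns. This is precisely the definition of row-stochasticity given above. The proof works uniformly for finite and countably infinite partitions, and the only mild technical point to watch is justifying that $\lambda(\cup \mathscr{A}) = \lambda(\interior(\cup\mathscr{A}))$, which follows from $\partial H_j$ being a two-point set for each $j$ and $\mathscr{H}$ being countable; beyond that, the argument is a direct calculation and presents no real obstacle.
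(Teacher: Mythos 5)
Your computation is correct and is exactly the argument the paper leaves implicit: the corollary is stated without proof immediately after Definition \ref{Transmatrix}, and the intended justification is precisely that the nonzero entries of row $i$ correspond to the elements of $\mathscr{A}$ from condition (3) of Definition \ref{Markovpart}, whose lengths sum to $\lambda(f(H_i)) = |f'(H_i)|\lambda(H_i)$, making the row sum equal to $1$. The only points worth being explicit about are the ones you already flag (boundaries are null, and $\interior(H_j)\subseteq f(H_i)$ forces $H_j\in\mathscr{A}$ by disjointness of interiors), so there is nothing to add.
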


\begin{defin}
For an $n \in \mathbb{Z}^+ \cup \{\infty\}$ dimensional row vector $v$, let $D_v$ be the $n\times n$ dimensional diagonal matrix with $v$ in its diagonal. In other words, the $n\times n$ matrix for which $[D_v]_{ij} = 0$ if $i \neq j$ and $[D_v]_{ii}=v_i$.
\end{defin}

\begin{defin}
Suppose that we have an absolutely continuous measure $\mu$ with a possibly infinite density $\rho$ and it is constant $c_i \in \overline{\mathbb{R}}_{\geq 0}$ on the interiors of all $H_i \in \mathscr{H}$. Let us denote the row vector $[c_1,c_2,...]$ by $[\rho]$.
\end{defin}

\begin{theorem} \label{perron}
Suppose that we have an absolutely continuous measure $\mu$ with a possibly infinite density $\rho$ and it is constant $c_i$ on the interiors of all $H_i \in \mathscr{H}$. Furthermore suppose, that for all $H_i \in \mathscr{H}$ we have that $\lambda(H_i) < \infty$. Let us denote the row vector $[\lambda (H_1), \lambda (H_2),...]$ by $\Lambda$ and let us denote $[\rho]$ by $C$.
In this case for all $k\in \mathbb{N}$:
\begin{equation} \label{markovVektor0}
\mathscr{P}_f^k(\rho)(x) = (C D_{\Lambda} P^k D_{\Lambda}^{-1})_j \text{ for all } x \in \interior (H_j).
\end{equation}
\end{theorem}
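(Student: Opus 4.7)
The natural plan is induction on $k$. Write $C^{(k)} := C D_\Lambda P^k D_\Lambda^{-1}$, so the claim is that $\mathscr{P}_f^k(\rho)$ is constant on $\interior(H_j)$ with value $C^{(k)}_j$. The base case $k=0$ is immediate because $D_\Lambda D_\Lambda^{-1}$ is the identity (in the countable case this still makes sense because $\Lambda$ has strictly positive entries by the non-empty interior assumption in Definition \ref{Markovpart}), so $C^{(0)}=C=[\rho]$, matching $\mathscr{P}_f^0(\rho)=\rho$.

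For the inductive step, assume $\mathscr{P}_f^k(\rho)$ takes the constant value $C^{(k)}_i$ on $\interior(H_i)$. Fix $x \in \interior(H_j)$ and apply \eqref{szepPerron}:
\[
\mathscr{P}_f^{k+1}(\rho)(x) \;=\; \sum_{z \in f^{-1}(x)} \frac{\mathscr{P}_f^k(\rho)(z)}{|f'(z)|}.
\]
Here I need to argue that this sum decomposes cleanly over the partition. For each $H_i \in \mathscr{H}$ with $\interior(H_j) \subseteq f(H_i)$, part 3 of Lemma \ref{Rendszer2} together with linearity of $f$ on $\interior(H_i)$ yields exactly one preimage $z_i \in \interior(H_i)$; for all other $H_i$ there is no preimage in $\interior(H_i)$. (Preimages that land in the null set $D_0$ contribute only a null set of $x$'s, so \eqref{szepPerron} is valid almost everywhere in $\interior(H_j)$, which is what the density needs.) Therefore
\[
\mathscr{P}_f^{k+1}(\rho)(x) \;=\; \sum_{i:\,\interior(H_j)\subseteq f(H_i)} \frac{C^{(k)}_i}{|f'(H_i)|},
\]
which is independent of $x \in \interior(H_j)$, so constancy is preserved.

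Now I substitute the Markov weights. Rearranging \eqref{transprob} gives
\[
\frac{1}{|f'(H_i)|} \;=\; \frac{\lambda(H_i)\, p_{ij}}{\lambda(H_j)}
\]
whenever $\interior(H_j)\subseteq f(H_i)$, and both sides are zero otherwise (by the ``otherwise'' clause of Definition \ref{Transmatrix}). Hence
\[
\mathscr{P}_f^{k+1}(\rho)(x) \;=\; \frac{1}{\lambda(H_j)} \sum_{i=1}^{\infty} C^{(k)}_i\, \lambda(H_i)\, p_{ij} \;=\; \bigl(C^{(k)} D_\Lambda P D_\Lambda^{-1}\bigr)_j \;=\; C^{(k+1)}_j,
\]
where the last equality uses associativity of the infinite matrix product; since all factors are non-negative the summation order and grouping are justified by Tonelli / Beppo Levi, just as for \eqref{aaaqa}.

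The main technical worry is exactly the one that motivates the use of Beppo Levi in \eqref{aaaqa}: because the partition is countable and the entries of $C^{(k)}$ may equal $+\infty$, I need every infinite sum (both the preimage sum and the matrix product) to be interpreted as a sum in $[0,\infty]$ with the convention $0\cdot\infty=0$, and I need to be explicit that $D_\Lambda^{-1}$ is well-defined because $\lambda(H_i)>0$ for every $i$. Apart from that bookkeeping, the rest is a direct combination of \eqref{szepPerron}, Lemma \ref{Rendszer2}, and the definition of $P$.
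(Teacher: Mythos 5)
Your proof is correct and follows essentially the same route as the paper's: a one-step computation via \eqref{szepPerron} and \eqref{transprob}, followed by induction on $k$, with Beppo Levi/Tonelli justifying the countable nonnegative sums. The only organizational difference is that the paper first verifies $k=1$ for the indicator densities $\rho_n$ and then extends by linearity, whereas you sum over preimages for a general piecewise-constant density directly; the computational content is identical.
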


\begin{rem}
If we assume that the partition is finite and that $\cup \mathscr{H}$ is an interval, then this statement is equivalent to Theorem 9.2.1 of \cite{Boyarsky}. According to that notation let $M_f := D_{\Lambda} P D_{\Lambda}^{-1}$, thus $M_f^k = D_{\Lambda} P^k D_{\Lambda}^{-1}$.
\end{rem}

\begin{proof}
First let us assume that $\rho = 1$ on $\interior (H_i)$ for some $i$ and zero otherwise. Let us start with $k=1$. Since we have linearity on all $H_n$, we have that $f^{-1}(x) \cap H_n$ has at most one element, for all $H_n \in \mathscr{H}$. As a consequence of this the left hand side of \eqref{markovVektor0} for all $x \in \interior (H_j)$ equals:\[
\mathscr{P}_f(\rho)(x) = \sum_{z \in f^{-1}(x)} \frac{c_i \chi_{\interior (H_i)}(z)}{|f'(H_i)|} =
\begin{cases}
		0 & \text{if } p_{ij}=0\\
		\frac{c_i}{|f'(H_i)|} & \text{otherwise.}
	\end{cases}
\]
The right hand side from \eqref{transprob} is as follows: \[
\frac{c_i \lambda(H_i) p_{ij}}{\lambda(H_j)} =  \begin{cases}
		0 & \text{if } p_{ij}=0\\
		\frac{c_i}{|f'(H_i)|} & \text{otherwise.}
	\end{cases}
\]

Now we can prove the statement for $k=1$ by noting that both sides of \eqref{markovVektor0} are linear in terms of $C$. Let us consider the densities $\{\rho_n \}_{n\in \mathbb{Z}^+}$. Their definition is as follows:
\[
\rho_n(x) = 
\begin{cases}
		1 & \text{if } x \in \interior (H_n)\\
		0 & \text{otherwise.}
	\end{cases}
\]

Let us denote $[\rho_n]$ by $C_n$, so all coordinates of $C_n$ are zero except for the $n$th which is one. Any density $\rho$ and vector $C$ we are considering can be written in the following way:\[
\rho = \sum_{n=1}^\infty \alpha_n \rho_n, \quad C = \sum_{n=1}^\infty \alpha_n C_n \text{ where } \alpha_n \in \overline{\mathbb{R}}_{\geq 0}.
\]

Note that since all summands are non-negative these sums always exist in $\overline{\mathbb{R}}$. From linearity of the formulae we have the following:
\begin{equation} \label{k1}
\Big[ \mathscr{P}_f(\rho) \Big] = \sum_{n=1}^\infty \alpha_n \Big[\mathscr{P}_f(\rho_n) \Big] = \sum_{n=1}^\infty \alpha_n C_n M_f = C M_f.
\end{equation}

This proves the statement for $k=1$. It is also clear that for a density $\rho$ satisfying the assumptions of the theorem the density $\mathscr{P}_f(\rho)$ also satisfies them. Suppose that for all $k < n$ the statement holds, then for $n$:
$$\Big[ \mathscr{P}^n_f(\rho) \Big] = \Big[ \mathscr{P}^{n-1}_f(\mathscr{P}_f(\rho)) \Big] = \Big[ \mathscr{P}_f(\rho) \Big]M_f^{n-1}= (C M_f)M_f^{n-1} = C M_f^n.$$
Thus the statement follows.
\end{proof}

This basically relates a dynamical system with an $\ell$-Markov partition to a Markov chain, hence the name of this property. This is extremely useful in finding absolutely continuous invariant measures of a system. The idea of using piecewise linear Markov maps to represent Markov-chains is present for example in \cite{Oohama}.

\begin{cor}
A row-eigenvector $v$ of $P$ associated with the eigenvalue 1 defines (almost everywhere) an invariant absolutely continuous measure $\mu$ of $f$ with density $\rho$ in the following way:\[
\rho(x) = \frac{v_i}{\lambda(H_i)} \text{ for all } x\in H_i.
\]
\end{cor}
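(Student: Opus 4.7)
The plan is to reduce the claim to a direct matrix computation using Theorem \ref{perron}. First I would observe that the density $\rho$ defined by $\rho(x) = v_i / \lambda(H_i)$ on $H_i$ is well-defined up to a set of Lebesgue measure zero (the boundaries of the $H_i$) and is constant on each $\interior(H_i)$, so the assumptions of Theorem \ref{perron} are satisfied. Writing $C = [\rho]$ in the notation of that theorem, we have the identity $C = v D_\Lambda^{-1}$, since the $i$th entry of $v D_\Lambda^{-1}$ is $v_i / \lambda(H_i)$.

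Next I would apply the $k=1$ case of Theorem \ref{perron} to compute
\begin{equation*}
[\mathscr{P}_f(\rho)] = C M_f = C D_\Lambda P D_\Lambda^{-1} = v D_\Lambda^{-1} D_\Lambda P D_\Lambda^{-1} = v P D_\Lambda^{-1}.
\end{equation*}
Since $v$ is assumed to be a row-eigenvector of $P$ with eigenvalue $1$, we have $vP = v$, and therefore
\begin{equation*}
[\mathscr{P}_f(\rho)] = v D_\Lambda^{-1} = C = [\rho].
\end{equation*}
This shows $\mathscr{P}_f(\rho)(x) = \rho(x)$ for every $x$ in $\interior(H_j)$ for every $j$, and hence $\mathscr{P}_f(\rho) = \rho$ almost everywhere on $X = \bigcup \mathscr{H}$, since the boundaries form a null set.

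Finally, I would translate this back into the measure-theoretic statement: by the defining property of the Frobenius--Perron operator, $\mathscr{P}_f(\rho) = \rho$ almost everywhere is equivalent to $\mu(f^{-1}(H)) = \int_H \rho \, d\lambda = \mu(H)$ for every $H \in \mathscr{L}(X)$, i.e.\ $\mu$ is $f$-invariant. Absolute continuity of $\mu$ is immediate from the construction, since $\mu$ is defined by integrating a density against $\lambda$. There is essentially no obstacle here beyond confirming that the manipulations with $D_\Lambda^{-1}$ are valid, which they are because $\lambda(H_i) < \infty$ for every $i$ by the assumption of Theorem \ref{perron} (and the entries of $\Lambda$ are strictly positive since each $H_i$ has non-empty interior). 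The one mild subtlety to keep in mind is the ``almost everywhere'' in the statement: the density $\rho$ need not equal its claimed form on the null set $D$ of boundary points, but invariance of the resulting measure is unaffected.
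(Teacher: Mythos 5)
Your proof is correct and follows exactly the route the paper intends: the corollary is stated as an immediate consequence of Theorem \ref{perron} (the $k=1$ case with $C = vD_\Lambda^{-1}$), and your computation $CM_f = vPD_\Lambda^{-1} = vD_\Lambda^{-1} = C$ is precisely that argument. The only implicit point worth noting is that $v$ must have non-negative entries for $\rho$ to be a density in the sense used by the paper, which the corollary tacitly assumes.
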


The transition matrix defines a Markov chain with countable states. The elements of the matrix are called the transition probabilities. About the convergence of these values \cite{Chung} writes the following:

\begin{defin}
The following are parameters of the Markov chain defined by $P$:
\begin{enumerate}
\item $p_{ij}^{(k)}=(P^k)_{i,j}$ the probability of stepping on $j$ after $k$ steps if we are in $i$.
\item $f_{ij}^{(n)} = \sum_{s \in S} p_{is_1}p_{s_{n-1}j}\prod_{k=2}^{n-1}p_{s_{k-1}s_k}$ where $s = (s_1, ..., s_{n-1})$ and $S$ is the set of $n-1$ long positive integer sequences not containing $j$. This is the probability of starting from $i$ and taking a path arriving at $j$ for the first time after exactly $n$ steps.
\item $f_{ij}^* = \sum_{n=1}^{\infty} f_{ij}^{(n)}$ the probability of eventually getting to $j$ if we start from $i$.
\item $m_i = \sum_{n=1}^{\infty} nf_{ii}^{(n)}$ the average return time to $i$ if we start from it. For $i$ transient or null-recurrent state this is defined to be $\infty$.
\item $d_i = \gcd\{n: p_{ii}^{(n)}\neq 0\}$ the period of state $i$, where $\gcd$ stands for greatest common divisor.
\item $f_{ij}^*(r) = \sum_{n=1}^{\infty} f_{ij}^{(nd_i+r)}$. This is the probability of eventually getting to $j$ if we start from $i$ given that we only consider every $(nd_i+r)$th step.
\end{enumerate}
\end{defin}

\begin{theorem}[According to Theorem 4 on page 33 of \cite{Chung}] \label{Villo}
For all $r \in \{1,...d_j\}$ \[ \lim_{n\rightarrow \infty} p_{ij}^{(nd_j+r)}=f_{ij}^*(r) \frac{d_j}{m_j}.
\]
If $m_j = \infty$, then the limit is zero.
\end{theorem}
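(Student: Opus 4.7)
The plan is to reduce to the diagonal case $i=j$ via the discrete first-passage decomposition and then invoke the classical renewal theorem. First I would prove the identity
\begin{equation*}
p_{ij}^{(n)} = \sum_{k=1}^{n} f_{ij}^{(k)}\, p_{jj}^{(n-k)}
\end{equation*}
by partitioning paths from $i$ to $j$ of length $n$ according to the first time $k$ at which $j$ is visited: the initial segment contributes $f_{ij}^{(k)}$ and, by the strong Markov property (or directly by rearranging products of transition probabilities), the remainder contributes $p_{jj}^{(n-k)}$.

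The heart of the argument is the diagonal case. Since $d_j = \gcd\{n:p_{jj}^{(n)}>0\}$ and an elementary argument shows this gcd equals $\gcd\{n:f_{jj}^{(n)}>0\}$, both $p_{jj}^{(n)}$ and $f_{jj}^{(n)}$ vanish unless $d_j\mid n$. Setting $q_m := p_{jj}^{(m d_j)}$ and $g_m := f_{jj}^{(m d_j)}$, the renewal identity becomes $q_m = \sum_{k=1}^{m} g_k\, q_{m-k}$ with $q_0 = 1$, and $\{g_m\}$ is an aperiodic sub-probability sequence with mean $\sum_m m\, g_m = m_j/d_j$. The Erdős--Feller--Pollard renewal theorem then gives
\begin{equation*}
\lim_{m\to\infty} q_m = \Big(\sum_m m\, g_m\Big)^{-1} = \frac{d_j}{m_j},
\end{equation*}
with the convention that the right-hand side is $0$ whenever $m_j = \infty$ (covering both the transient and the null-recurrent cases).

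For general $i$ and residue $r \in \{1, \dots, d_j\}$, substitute $n \mapsto nd_j + r$ into the first-passage decomposition. Because $p_{jj}^{(nd_j + r - k)}$ vanishes unless $k \equiv r \pmod{d_j}$, only the terms with $k = s d_j + r$, $0 \le s \le n$, survive, giving
\begin{equation*}
p_{ij}^{(nd_j + r)} = \sum_{s=0}^{n} f_{ij}^{(s d_j + r)}\, p_{jj}^{((n-s)d_j)}.
\end{equation*}
Letting $n \to \infty$, for each fixed $s$ the factor $p_{jj}^{((n-s)d_j)}$ tends to $d_j/m_j$ by the diagonal case, while $f_{ij}^{(s d_j + r)}$ is bounded above by a summable dominator since $\sum_s f_{ij}^{(s d_j + r)} = f_{ij}^*(r) \le 1$. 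Dominated convergence on the counting measure then yields
\begin{equation*}
\lim_{n\to\infty} p_{ij}^{(nd_j + r)} = \frac{d_j}{m_j} \sum_{s=0}^{\infty} f_{ij}^{(s d_j + r)} = f_{ij}^*(r)\, \frac{d_j}{m_j}.
\end{equation*}

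The main obstacle is the renewal theorem itself: proving $q_m \to d_j/m_j$ for an aperiodic renewal sequence is nontrivial and classically requires either a subtle Tauberian argument or a coupling construction. I would simply cite the Erdős--Feller--Pollard theorem (as developed in detail in the referenced Chung volume) rather than reproduce its proof. A secondary subtlety is justifying the exchange of limit and summation in the $i \ne j$ step; this is routine once one observes that $p_{jj}^{(k d_j)} \le 1$ uniformly and $f_{ij}^*(r) \le 1$, so the hypotheses of dominated convergence apply without incident.
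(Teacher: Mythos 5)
The paper offers no proof of this statement---it is quoted as a known result from Chung---and your argument is precisely the classical one given there: reduce to the diagonal case via the first-passage decomposition $p_{ij}^{(n)}=\sum_{k=1}^{n}f_{ij}^{(k)}p_{jj}^{(n-k)}$, apply the Erd\H{o}s--Feller--Pollard renewal theorem to the rescaled sequences $q_m=p_{jj}^{(md_j)}$, $g_m=f_{jj}^{(md_j)}$, and pass the limit through the sum by dominated convergence. The proof is correct (the transient case being covered by the convention $m_j=\infty$ together with $\sum_n p_{jj}^{(n)}<\infty$); the only discrepancy is that your sum for $f_{ij}^*(r)$ starts at $s=0$ while the paper's definition starts at $n=1$, which is an off-by-one slip in the paper's transcription of Chung rather than in your argument.
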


\begin{theorem}[According to Theorem 1.7.7 of \cite{Norris_1997}]\label{pozrec_cited_theo}
Let $P$ be the transition matrix of an irreducible Markov chain. The following are equivalent:
\begin{enumerate}
\item Every state is positive recurrent.
\item Some state $i$ is positive recurrent.
\item There exists a stochastic row vector $\Pi$ which is a stationary distribution. That is $\Pi = \Pi P$. We also have that $m_i = \frac{1}{\Pi_i}$
\end{enumerate}
\end{theorem}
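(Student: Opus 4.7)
The plan is to prove the three-way equivalence in the cyclic order $(1)\Rightarrow(2)\Rightarrow(3)\Rightarrow(1)$. The implication $(1)\Rightarrow(2)$ is immediate since the chain has at least one state. For $(2)\Rightarrow(3)$, the classical approach is to build a stationary measure out of the excursion from the positive recurrent state $i$. Namely, define
\[
\gamma_j \;=\; \mathbb{E}_i\!\left[\sum_{n=0}^{T_i-1}\mathbf{1}_{\{X_n=j\}}\right],
\]
where $T_i$ is the first return time to $i$. First I would verify that $\gamma_i=1$, that $\sum_j \gamma_j = m_i<\infty$ (this is where positive recurrence is used), and that $\gamma P=\gamma$. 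The invariance check is the usual one: condition on the state visited at time $n-1$ and on $\{T_i\ge n\}$, rearrange, and use that under $\mathbb{P}_i$ the walk returns to $i$ almost surely (which follows from recurrence together with irreducibility, giving $f_{ji}^*=1$ for all $j$). Normalising gives $\Pi_j=\gamma_j/m_i$, a stochastic stationary distribution with $\Pi_i=1/m_i$.

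For $(3)\Rightarrow(1)$, fix any state $j$. Since $P$ is irreducible and $\Pi=\Pi P$, iterating gives $\Pi_j=\sum_i\Pi_i\,p_{ij}^{(k)}$ for every $k\ge 1$, and standard arguments force $\Pi_i>0$ for all $i$. Now I would Cesàro-average over $k$ and invoke Theorem \ref{Villo}: along each residue class $r\in\{1,\dots,d_j\}$ one has $p_{ij}^{(nd_j+r)}\to f_{ij}^*(r)\,d_j/m_j$, and since irreducibility yields $f_{ij}^*=\sum_r f_{ij}^*(r)=1$, the Cesàro limit of $p_{ij}^{(k)}$ equals $1/m_j$ for every fixed $i$. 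Passing this limit through the sum on $i$ requires care because the sum over $i$ is infinite; I would justify the exchange by dominated convergence using the probability measure $\Pi$ as a dominating measure for the bounded integrands $p_{ij}^{(k)}\in[0,1]$. The conclusion is $\Pi_j=1/m_j$, and since $\Pi_j>0$ we get $m_j<\infty$; since $j$ was arbitrary, every state is positive recurrent. The identity $m_i=1/\Pi_i$ in clause (3) is simultaneously produced by this step, and uniqueness of $\Pi$ is not needed to establish the equivalence itself.

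The main obstacle is the interchange of limit and infinite sum in the $(3)\Rightarrow(1)$ step; in the finite state case it is automatic, but in the countable setting one really has to lean on the fact that $\Pi$ is a probability measure so that bounded convergence applies. A secondary technicality is the verification that $\gamma P=\gamma$ in $(2)\Rightarrow(3)$, which is a careful bookkeeping of excursions and uses $f_{ji}^*=1$, itself a consequence of recurrence combined with the irreducibility hypothesis.
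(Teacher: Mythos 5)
The paper supplies no proof of this statement; it is imported directly as Theorem 1.7.7 of Norris, so there is no internal argument to compare against and I will compare yours with the standard one. Your $(1)\Rightarrow(2)$ and $(2)\Rightarrow(3)$ steps coincide with the textbook proof: the excursion measure $\gamma_j=\mathbb{E}_i\bigl[\sum_{n=0}^{T_i-1}\mathbf{1}_{\{X_n=j\}}\bigr]$ is invariant (the shift of the summation window uses $\mathbb{P}_i(T_i<\infty)=1$, i.e.\ recurrence of $i$), has total mass $m_i<\infty$ by positive recurrence, and normalises to a stationary distribution with $\Pi_i=1/m_i$. Your $(3)\Rightarrow(1)$ step is genuinely different: Norris compares an arbitrary invariant measure normalised at a state $k$ with the excursion measure from $k$ to get $m_k\le 1/\Pi_k<\infty$, whereas you Cesàro-average the identity $\Pi_j=\sum_i\Pi_i\,p_{ij}^{(k)}$ and feed in the limit theorem for $p_{ij}^{(n)}$ (Theorem \ref{Villo}), passing the limit through the infinite sum by bounded convergence against the probability measure $\Pi$. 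This route has the merit of using only tools already quoted in the paper, at the price of leaning on the harder limit theorem.

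One claim in that step needs repair: you assert that irreducibility yields $f_{ij}^*=1$. That is false for transient chains (biased simple random walk on $\mathbb{Z}$ is irreducible with $f_{00}^*<1$), and at the point where you invoke it you have not yet established recurrence, so as written the step is circular. The fix is painless and stays inside your own argument: Theorem \ref{Villo} gives the Cesàro limit of $p_{ij}^{(k)}$ as $f_{ij}^*/m_j$ in general and as $0$ whenever $m_j=\infty$; bounded convergence then forces $\Pi_j=0$ in the case $m_j=\infty$, contradicting $\Pi_j>0$. Hence $m_j<\infty$, i.e.\ $j$ is positive recurrent, and only now does recurrence together with irreducibility deliver $f_{ij}^*=1$, which upgrades the same computation to the identity $\Pi_j=1/m_j$. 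With that reordering the argument is complete.
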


\begin{cor} \label{kovetkeztetes}
If $\Pi$ is a finite or countably infinite dimensional row vector, such that $\Pi_i = \lim_{k \rightarrow \infty} p_{ii}^{(k)}$ then if the Markov chain is positive recurrent, then $\Pi$ is stochastic.
\end{cor}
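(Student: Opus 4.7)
The strategy is to combine Theorem \ref{Villo} with Theorem \ref{pozrec_cited_theo}: I would show that each coordinate $\Pi_i$ must equal $1/m_i$, and then recognize this coordinate description as the stochastic stationary distribution furnished by the latter theorem. Thus the whole argument amounts to identifying $\Pi$ with a vector that is already known to be stochastic.

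First I would observe that the assumed existence of $\lim_{k\to\infty} p_{ii}^{(k)}$, together with positive recurrence, forces every period $d_i$ to equal $1$. Indeed, if some $d_i \geq 2$ then $p_{ii}^{(k)} = 0$ for all $k$ with $d_i \nmid k$, while along the subsequence $k = n d_i$ Theorem \ref{Villo} (with $j = i$ and $r = 0$, using $m_i < \infty$ from positive recurrence and $f_{ii}^*(0) = f_{ii}^* = 1$ from recurrence) gives the strictly positive limit $d_i/m_i$. The sequence $(p_{ii}^{(k)})_k$ would then have at least two distinct accumulation points, contradicting convergence. Hence $d_i = 1$ for every state $i$, and reapplying Theorem \ref{Villo} yields $\Pi_i = 1/m_i$.

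Finally, positive recurrence allows me to invoke Theorem \ref{pozrec_cited_theo}, which supplies a stochastic stationary row vector $\tilde\Pi$ with $\tilde\Pi_i = 1/m_i$. Since $\Pi$ and $\tilde\Pi$ agree in every coordinate, $\Pi = \tilde\Pi$, and therefore $\Pi$ is stochastic.

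The principal subtlety is the aperiodicity step: reading the hypothesis literally (convergence of $p_{ii}^{(k)}$ as $k$ ranges over all positive integers) is what rules out $d_i \geq 2$, and this is really the only non-mechanical point in the argument. Once aperiodicity is established, the remainder is a direct identification of $\Pi$ with the stationary distribution from Theorem \ref{pozrec_cited_theo}. One implicit convention worth recording is that Theorem \ref{pozrec_cited_theo} is stated for irreducible chains, so the application should be read in that setting.
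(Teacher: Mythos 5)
Your proposal is correct and follows exactly the route the paper takes: the paper's own proof is the one-line observation that the claim ``follows in a straightforward way from Theorem~\ref{Villo} and Theorem~\ref{pozrec_cited_theo}'', and your identification $\Pi_i = 1/m_i$ via Theorem~\ref{Villo} (using $f_{ii}^*=1$) matched against the stochastic stationary distribution of Theorem~\ref{pozrec_cited_theo} is precisely the intended argument. Your additional care about aperiodicity (note only that Theorem~\ref{Villo} indexes $r\in\{1,\dots,d_j\}$, so your ``$r=0$'' case should formally be read as $r=d_i$) supplies detail the paper leaves implicit.
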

\begin{proof}
This follows in a straightforward way from Theorem \ref{Villo} and Theorem \ref{pozrec_cited_theo}.
\end{proof}

\begin{cor} \label{Hatar}
If the matrix $P$ defines an aperiodic irreducible Markov chain, then the pointwise limit of $P^{k}$ exists as $k$ tends to infinity. If additionally $f^*_{ii} = 1$ for all $i$ then all the columns of $\lim_{k\rightarrow \infty} P^k$ are constant, so there is a row vector $w$ such that for all stochastic row vectors $v$ of sufficient dimension $\lim_{k\rightarrow \infty} vP^k = w$ coordinatewise. 
\end{cor}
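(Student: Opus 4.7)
The plan is to deduce both halves directly from Theorem \ref{Villo}, together with the standard fact that an irreducible recurrent Markov chain satisfies $f_{ij}^{*}=1$ for every ordered pair of states.

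Since the chain is aperiodic, every $d_j=1$, so Theorem \ref{Villo} (with the only allowed value $r=1$) specialises to
\[
\lim_{k\to\infty} p_{ij}^{(k)} = \frac{f_{ij}^{*}}{m_j},
\]
with the convention $1/\infty = 0$ when $m_j=\infty$; this already yields existence of the pointwise limit of $P^{k}$ entrywise, proving the first claim. For the strengthened hypothesis, $f_{ii}^{*}=1$ says each state is recurrent. A short strong-Markov argument combined with irreducibility then upgrades this to $f_{ij}^{*}=1$ for all pairs $(i,j)$: by irreducibility pick $n$ with $p_{ij}^{(n)}>0$, and note that after each of the infinitely many a.s.\ returns to $i$ there is a fixed positive chance of reaching $j$ within the next $n$ steps, so the probability of ever reaching $j$ is one. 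Plugging this back in, $\lim_{k\to\infty} p_{ij}^{(k)} = 1/m_j$, which does not depend on the starting row $i$; all columns of $\lim_{k\to\infty}P^{k}$ are therefore constant, and we set $w_j := 1/m_j$.

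It remains to verify coordinatewise convergence $vP^{k}\to w$ for an arbitrary stochastic row vector $v$ of matching dimension. Writing
\[
(vP^{k})_j \;=\; \sum_{i} v_i\, p_{ij}^{(k)},
\]
we have $0 \leq v_i p_{ij}^{(k)} \leq v_i$ and $\sum_i v_i = 1 < \infty$, so the summands are dominated (in $i$) by an integrable envelope for counting measure. Dominated convergence then permits interchanging limit and sum:
\[
\lim_{k\to\infty}(vP^{k})_j \;=\; \sum_{i} v_i \lim_{k\to\infty} p_{ij}^{(k)} \;=\; w_j \sum_{i} v_i \;=\; w_j.
\]
The only real subtlety is this interchange; everything else is bookkeeping around Theorem \ref{Villo}. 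The domination is what makes it work cleanly even in the null-recurrent case (where $w\equiv 0$) and in the transient case implicit in the first half of the statement.
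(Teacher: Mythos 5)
Your proposal is correct and follows essentially the same route as the paper's proof: aperiodicity plus Theorem \ref{Villo} for existence of the entrywise limit, then recurrence plus irreducibility to get $f_{ij}^*=1$ for all pairs, making the limit $1/m_j$ independent of the row. The only difference is that you explicitly justify the interchange $\lim_k \sum_i v_i p_{ij}^{(k)} = \sum_i v_i \lim_k p_{ij}^{(k)}$ by dominated convergence, a step the paper leaves implicit; this is a welcome extra detail, not a change of method.
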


\begin{proof}
If $P$ is aperiodic, then $d_j = 1$ for all $j$, hence by Theorem \ref{Villo} $\lim_{n\rightarrow \infty} p_{ij}^{(n)}$ exists for all $i,j$. The value of $d_j$ and $m_j$ only depend on the value of $j$ so they are constant throughout a column. With irreducibility we have that $p_{ij}^{(n)}\neq 0$ for all $i,j$ for some $n$. Since $f^*_{ii} = 1$ we have that we return infinitely many times from $i$ to $i$ with probability one. From this $f_{ij}^* =1$ for all $i, j$. This makes $\lim_{n\rightarrow \infty} p_{ij}^{(n)}$ independent of $i$. By this for all $v$ stochastic vectors $\lim_{k\rightarrow \infty} (vP^k)_j = \lim_{k\rightarrow \infty} p_{1j}^{(k)}$.
\end{proof}

\begin{cor} \label{eloszlas}
Suppose that $P$ satisfies all conditions in Corollary \ref{Hatar}. Let $\pi :\: \mathscr{L}(X) \rightarrow \overline{\mathbb{R}}_{\geq 0} $ be the absolutely continuous measure with density $\tau$ where:\[
\tau(x) = \frac{\lim_{k\rightarrow \infty} p_{ii}^{(k)}}{\lambda(H_i)} \text{ for all } x \in \interior (H_i).
\]
If $\mu:\: \mathscr{L}(X)\rightarrow \overline{\mathbb{R}}_{\geq 0} $ is an absolutely continuous measure with density $\rho$ where $\mu(X) < \infty$ and $\rho$ is constant on $\interior (H_i)$ for all $i$, then: 
\begin{enumerate}
\item If the associated Markov chain is not necessarily positive recurrent, then for all $A \in \mathscr{L}(X)$ such that there exists $N \in \mathbb{N}$, such that $\lambda (A \setminus \cup_{i=1}^N H_i ) = 0$ we have that $\lim_{k\rightarrow \infty}\mu(f^{-k}(A)) = \mu(X) \pi(A).$
\item If the associated Markov chain is positive recurrent, then $\mathscr{P}_f^k (\rho) \to \mu(X)\tau$ in $L^1$ as $k \to \infty$. Thus for all $A \in \mathscr{L}(X)$ we have that $\lim_{k\rightarrow \infty}\mu(f^{-k}(A)) = \mu(X) \pi(A).$
\end{enumerate}
\end{cor}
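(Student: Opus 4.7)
The plan is to exploit Theorem \ref{perron} to write $\mathscr{P}_f^k(\rho)$ in closed form and then read off its convergence from Corollary \ref{Hatar}, proving Part 1 by a finite-sum reduction and upgrading to $L^1$ convergence in Part 2 via Scheff\'e's lemma. Concretely, Theorem \ref{perron} gives that $\mathscr{P}_f^k(\rho)$ is constant on $\interior(H_j)$ with value
\[
\mathscr{P}_f^k(\rho)|_{\interior(H_j)} = \bigl(CD_\Lambda P^k D_\Lambda^{-1}\bigr)_j = \frac{1}{\lambda(H_j)} \sum_{i=1}^{\infty} c_i\, \lambda(H_i)\, p_{ij}^{(k)}.
\]

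First I would establish pointwise convergence $\mathscr{P}_f^k(\rho) \to \mu(X)\tau$ on $\bigcup_j \interior(H_j)$. By Corollary \ref{Hatar} we have $p_{ij}^{(k)} \to w_j$ as $k \to \infty$, where $w_j := \lim_{k} p_{jj}^{(k)}$ is independent of $i$, and $0 \leq p_{ij}^{(k)} \leq 1$. Since $\sum_i c_i \lambda(H_i) = \mu(X) < \infty$, dominated convergence on the counting measure (with dominant $c_i\lambda(H_i)$) gives $\sum_i c_i \lambda(H_i) p_{ij}^{(k)} \to w_j\,\mu(X)$, so the displayed quantity tends to $w_j\mu(X)/\lambda(H_j) = \mu(X)\tau(x)$. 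For Part 1, if $A$ is contained in $\bigcup_{i=1}^N H_i$ up to a null set, then
\[
\mu(f^{-k}(A)) = \int_A \mathscr{P}_f^k(\rho)\, d\lambda = \sum_{j=1}^N \lambda(A \cap H_j) \cdot \mathscr{P}_f^k(\rho)|_{\interior(H_j)}
\]
is a finite sum, and term-by-term passage to the limit yields $\mu(X)\int_A \tau\,d\lambda = \mu(X)\pi(A)$.

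For Part 2 the extra input is positive recurrence: Corollary \ref{kovetkeztetes} then tells us that $(w_j)_j$ is a stochastic vector, so $\int_X \mu(X)\tau\, d\lambda = \mu(X)\sum_j w_j = \mu(X)$. On the other hand, the Frobenius--Perron operator preserves the $L^1$ norm of a non-negative density, so $\|\mathscr{P}_f^k(\rho)\|_{L^1} = \mu(X)$ for every $k$. Having non-negative integrable functions converging pointwise a.e.\ and all sharing the common total integral $\mu(X)$, Scheff\'e's lemma furnishes $\|\mathscr{P}_f^k(\rho) - \mu(X)\tau\|_{L^1} \to 0$, whence for every $A \in \mathscr{L}(X)$
\[
|\mu(f^{-k}(A)) - \mu(X)\pi(A)| \leq \|\mathscr{P}_f^k(\rho) - \mu(X)\tau\|_{L^1} \longrightarrow 0.
\]

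The main obstacle is the jump from pointwise convergence in Part 1 to full $L^1$ convergence in Part 2: this truly requires $\sum_j w_j = 1$, i.e.\ positive recurrence, because otherwise mass can leak out along the tail of the partition and the pointwise limit would have strictly smaller total mass than $\mu$, so neither Scheff\'e's lemma nor convergence on unbounded $A$ could be recovered.
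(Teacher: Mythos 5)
Your proposal is correct and follows essentially the same route as the paper: Theorem \ref{perron} plus Corollary \ref{Hatar} give convergence of $\mathscr{P}_f^k(\rho)$ on each $\interior(H_j)$, Part 1 is a finite-sum reduction over $\cup_{i=1}^N H_i$, and Part 2 combines stochasticity of the limit vector (Corollary \ref{kovetkeztetes}) with conservation of $\int \mathscr{P}_f^k(\rho)\,d\lambda$ to rule out mass escaping along the tail. The only cosmetic difference is that you invoke Scheff\'e's lemma by name where the paper carries out the corresponding $\varepsilon/4$ tail estimate by hand, and you make explicit the dominated-convergence interchange that the paper delegates to Corollary \ref{Hatar}.
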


\begin{proof}
Let the vector of constants of $\rho$ be as defined in Theorem \ref{perron} be $C = [c_1, c_2...]$. By Theorem \ref{perron} we have that $$\mathscr{P}_f^k(\rho)(x) = (C M_f^k)_i \text{ for all } x \in \interior (H_i).$$
Let $r = \sum_{i=1}^\infty c_i \lambda(H_i) = \mu(X)$. Since we have assumed that $\mu(X)<\infty$ we have that $r<\infty$. Let $q = \frac{1}{r} (C D_\Lambda)$, thus $q$ is stochastic. By Corollary \ref{Hatar} we have the following:\[
\lim_{k \rightarrow \infty} ((C D_\Lambda)P^k)_i = r \cdot \lim_{k \rightarrow \infty} (qP^k)_i = r \cdot \lim_{k \rightarrow \infty} p_{ii}^{(k)}.
\]
This implies that on all $\interior(H_i)$ the sequence $\mathscr{P}_f^k(\rho)$ converges uniformly to $\mu(X)\tau$. From this point we treat the two cases separately.\medskip

For the first case we have assumed, that there exists $N$ such that $\lambda ( A \setminus \cup_{i=1}^N H_i) = 0$. Let us take $G = \cup_{i=1}^N H_i$. On $G$ the sequence $\mathscr{P}_f^k(\rho)$ converges uniformly to $\mu(X)\tau$, and $\lambda(G)<\infty$ thus $\mu(f^{-k}(A)) = \mu(f^{-k}(A \cap G)) \rightarrow \mu(X)\pi(A \cap G) = \mu(X)\pi(A)$. This proves the first statement.\medskip

In the positive recurrent case according to Corollary \ref{kovetkeztetes} we have the following. If $\Pi$ is a finite or countably infinite dimensional row vector, such that $\Pi_i = \lim_{k \rightarrow \infty} p_{ii}^{(k)}$, then $\Pi$ is stochastic, thus it is a stationary distribution of the Markov chain.\medskip

Given $\varepsilon > 0$ we give an $n_0$, such that for all $n > n_0$ we have that $\int_\mathbb{R} |\mathscr{P}_f^n(\rho) - \mu(X)\tau| d\lambda < \varepsilon$. Without loss of generality we may assume that $\mu(X) = 1$. Let us pick $N \in \mathbb{Z}^+$ such that $\sum_{i=1}^N \pi (H_i) = \sum_{i=1}^N \Pi_i > 1 - \frac{\varepsilon}{4}$. Let us pick $n_0$ such that for all $n>n_0$ we have $\sum_{i=1}^N \int_{H_i} |\mathscr{P}_f^n(\rho) - \tau| d\lambda < \frac{\varepsilon}{4}$. There exists such an $n_0$ as on any finite union of states $\mathscr{P}_f^n(\rho)$ converges uniformly to $\tau$ and any finite union of states has finite measure. From this for any $n>n_0$ we have that $\sum_{i=1}^N \int_{H_i} \mathscr{P}_f^n(\rho) d\lambda > 1- \frac{2\varepsilon}{4}$. Since $\int_{\mathbb{R}} \mathscr{P}_f^n(\rho) d\lambda= \int_{\mathbb{R}} \rho d\lambda$ we can conclude that $\sum_{i=N+1}^\infty \int_{H_i} | \mathscr{P}_f^n(\rho)| d\lambda < \frac{2\varepsilon}{4}$ for all $n > n_0$. Similarly we have that  $\sum_{i=N+1}^\infty \int_{H_i} | \tau | d\lambda < \frac{\varepsilon}{4}$. As a consequence of this $\sum_{i=N+1}^\infty \int_{H_i} | \mathscr{P}_f^n(\rho) - \tau | d\lambda < \frac{3\varepsilon}{4}$. As a consequence of this $\int_{\mathbb{R}} | \mathscr{P}_f^n(\rho) - \tau | d\lambda < \varepsilon$ for all $n > n_0$. This also implies that for all $A \in \mathscr{L}(X)$ we have that $\mu(f^{-k}(A)) \to \mu(X)\pi(A)$ as $k \to \infty$. This proves the second statement.

\end{proof}

\begin{cor} \label{stoch}
For the measure $\pi$ the measure of the whole space is $0 \leq \pi(X) \leq 1$.
\end{cor}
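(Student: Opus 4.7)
The plan is to apply the first case of Corollary \ref{eloszlas} to a conveniently chosen probability measure, and then pass to the limit through an exhausting sequence of finite unions. The lower bound $\pi(X) \geq 0$ is immediate from the fact that $\pi$ is a (non-negative) measure, so only $\pi(X) \leq 1$ requires work.

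First I would pick any density $\rho$ that is constant on each $\interior(H_i)$ and produces a probability measure. The simplest choice is $\rho = \chi_{\interior(H_1)}/\lambda(H_1)$, giving a probability measure $\mu$ with $\mu(X) = 1$ (here I am using that $\lambda(H_1) < \infty$, which is part of the standing hypothesis of Definition \ref{Transmatrix}). This $\rho$ is constant on each $\interior(H_i)$, so it satisfies the hypotheses of Corollary \ref{eloszlas}.

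Next, for each $N \in \mathbb{Z}^+$ set $A_N = \bigcup_{i=1}^N H_i$. Then $A_N$ trivially satisfies $\lambda(A_N \setminus \bigcup_{i=1}^N H_i) = 0$, so part 1 of Corollary \ref{eloszlas} applies and yields
\[
\lim_{k\to\infty} \mu(f^{-k}(A_N)) \;=\; \mu(X)\,\pi(A_N) \;=\; \pi(A_N).
\]
Since $\mu$ is a probability measure, $\mu(f^{-k}(A_N)) \leq 1$ for every $k$, and hence $\pi(A_N) \leq 1$ for every $N$.

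Finally, the sets $A_N$ form an increasing sequence with $\bigcup_N A_N = X$ (because $\mathscr{H}$ is a cover of $X$), so by continuity of $\pi$ from below we obtain $\pi(A_N) \to \pi(X)$, which gives $\pi(X) \leq 1$. There is no substantive obstacle; the only point that must be checked cleanly is that the chosen $\rho$ genuinely fits the hypotheses of Corollary \ref{eloszlas} (non-negative, piecewise constant on the $\interior(H_i)$, and producing a finite total measure), which the choice above guarantees.
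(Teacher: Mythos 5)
Your argument is correct: the measure $\mu$ with density $\chi_{\interior(H_1)}/\lambda(H_1)$ satisfies the hypotheses of Corollary \ref{eloszlas} (piecewise constant density, $\mu(X)=1<\infty$), the sets $A_N=\bigcup_{i=1}^N H_i$ fall under its first case, the bound $\mu(f^{-k}(A_N))\leq\mu(X)=1$ passes to the limit to give $\pi(A_N)\leq 1$, and continuity from below finishes the job. The paper offers no written proof, and the intended one is more direct and runs through the Markov chain rather than the dynamics: $\pi(H_i)=\lim_k p_{ii}^{(k)}=\Pi_i$ by the definition of $\tau$, and one either invokes Fatou's lemma on the row sums of $P^k$ (each row sums to $1$, so $\sum_i \Pi_i \leq \liminf_k \sum_i p_{1i}^{(k)} = 1$, using that the columns of the limit are constant by Corollary \ref{Hatar}), or notes the dichotomy that in the positive recurrent case $\Pi$ is stochastic by Corollary \ref{kovetkeztetes} so $\pi(X)=1$, while otherwise $m_i=\infty$ for every state and Theorem \ref{Villo} forces every $\Pi_i=0$, so $\pi(X)=0$. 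The direct route is shorter and reveals that $\pi(X)\in\{0,1\}$ under the standing irreducibility assumption; your route is a valid uniform argument that trades this extra information for only needing the already-proved convergence statement as a black box.
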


\section{Convergence and uniqueness of invariant measures for expansive $\ell$-Markov partitions} \label{sec3}

In this section we restrict the class of systems considered so that we can meaningfully discuss the absolutely continuous invariant measures. These will be systems with an expansive $\ell$-Markov partition. We prove that under certain conditions the pushforwards of all absolutely continuous probability measures converge to the same invariant measure. One of the corollaries of our results is that all of the finite absolutely continuous invariant measures of these systems are piecewise constant on the elements of the expansive $\ell$-Markov partition. For finite partitions this result is present in Theorem 9.4.2 of \cite{Boyarsky} with a different proof. Our results are unique in that we focus on the convergence of the pushforwards of all absolutely continuous measures, thus we prove stronger statements than just the characterization of the absolutely continuous invariant measures. Generally when discussing absolutely continuous invariant measures only $\sigma$-finite ones are considered however we consider any measure which has a $\rho: X \rightarrow \overline{\mathbb{R}}_{\geq 0}$ density. The convergence of the Frobenious-Perron operator for Markov maps is thoroughly studied in \cite{Bugiel1991, Bugiel1998, Bugiel2021} with assumptions differing from our investigation. At the end of the chapter we discuss how these theorems relate to our result.\medskip

We also refer to Theorem 4.9. of \cite{sarigPF}. In contrast to the cited theorem our results treat the convergence of pushforwards of any absolutely continuous invariant measures. In our case the cited theorem is only applicable to measures which have a density function $\chi_{H_{i_0...i_n}}$ for some positive integer sequence $i_0, ..., i_n$.
\subsection{Convergence theorem}\label{convergence_theo}

\begin{theorem} \label{nagyhatar}
Suppose that we have a dynamical system $(X, f)$ which has an expansive $\ell$-Markov partition $\mathscr{H}$, defining an aperiodic irreducible Markov chain which satisfies the conditions of Corollary \ref{Hatar}. Let $\pi :\: \mathscr{L}(X) \rightarrow \overline{\mathbb{R}}_{\geq 0} $ be the absolutely continuous stationary measure with density $\tau$ defined in Corollary \ref{eloszlas}. Let $\mu:\: \mathscr{L}(X) \rightarrow \overline{\mathbb{R}}_{\geq 0}$ be an absolutely continuous measure with $\mu(X) < \infty$. 
\begin{enumerate}
\item Let $A$ be a set in $\mathscr{L}(X)$ such that there exists $N \in \mathbb{N}$, such that $\lambda (A \setminus \cup_{i=1}^N H_i ) = 0$. In this case $\lim_{n\rightarrow \infty}\mu(f^{-n}(A)) =\mu(X)\pi(A)$.
\item If the Markov chain defined by the system is positive recurrent, then for all $A \in \mathscr{L}(X)$ we have that $\lim_{n\rightarrow \infty}\mu(f^{-n}(A)) =\mu(X)\pi(A)$. 
\end{enumerate}
\end{theorem}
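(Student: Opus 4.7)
The plan is to reduce both statements to Corollary \ref{eloszlas} via an $L^1$ approximation argument, exploiting expansivity of the partition together with the $L^1$ contraction property of $\mathscr{P}_f$. Since Corollary \ref{eloszlas} already handles densities that are constant on each $\interior(H_i)$, I would approximate the density $\rho$ of $\mu$ by such a ``piecewise constant on $\mathscr{H}$'' density and then control the error uniformly in $k$ as $k \to \infty$.

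Fix $\varepsilon > 0$. I would first approximate $\rho$ in $L^1$ by a simple function $\rho_1 = \sum_i c_i \chi_{B_i}$ with each $B_i$ a bounded interval; this is the standard density of step functions in $L^1(\mathbb{R})$. Next, for each $B_i$ I would invoke expansivity (Definition \ref{expansdef}) to replace $B_i$ by a finite union of cylinder sets $H_{j_0\dots j_m}$ with arbitrarily small $\lambda$-error, and then use Lemma \ref{Rendszer2}(7) to refine every cylinder that appears to a common depth $n$, without changing the underlying sets modulo null sets. The outcome is a density $\rho' = \sum_k d_k \chi_{H_{j^{(k)}_0 \dots j^{(k)}_n}}$ supported on finitely many pairwise essentially disjoint depth-$n$ cylinders, with $\|\rho - \rho'\|_1 < \varepsilon$. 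The crucial point is that $\mathscr{P}_f^n$ flattens depth-$n$ cylinders onto the partition: since $f^n$ is a linear homeomorphism from $\interior(H_{i_0\dots i_n})$ onto $\interior(H_{i_n})$, iterating Lemma \ref{Rendszer2}(9) yields
\[
\mathscr{P}_f^n\bigl(\chi_{H_{i_0 \dots i_n}}\bigr) = \frac{\lambda(H_{i_0 \dots i_n})}{\lambda(H_{i_n})}\, \chi_{H_{i_n}},
\]
so $\mathscr{P}_f^n(\rho')$ is constant on each $\interior(H_i)$ and hence is the density of a finite absolutely continuous measure $\mu'$ to which Corollary \ref{eloszlas} applies.

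Writing $k = n + m$ and inserting $\mu'(f^{-m}(A)) = \int_A \mathscr{P}_f^m(\mathscr{P}_f^n(\rho'))\, d\lambda$, the triangle inequality gives
\[
\bigl|\mu(f^{-k}(A)) - \mu(X)\pi(A)\bigr| \leq \bigl\|\mathscr{P}_f^n(\rho - \rho')\bigr\|_1 + \bigl|\mu'(f^{-m}(A)) - \mu'(X)\pi(A)\bigr| + \bigl|\mu'(X) - \mu(X)\bigr|\pi(A).
\]
The first term is at most $\|\rho - \rho'\|_1 < \varepsilon$, because $|\mathscr{P}_f g| \leq \mathscr{P}_f|g|$ makes $\mathscr{P}_f$ an $L^1$-contraction; the third term is at most $\varepsilon$ since $\mu'(X) = \|\rho'\|_1$, $\mu(X) = \|\rho\|_1$, and $\pi(A) \leq 1$ by Corollary \ref{stoch}; and the middle term tends to $0$ as $m \to \infty$ by Corollary \ref{eloszlas} --- in case (1) because $A$ lies in a finite union of partition elements, and in case (2) because positive recurrence upgrades the conclusion to $L^1$ convergence $\mathscr{P}_f^m(\mathscr{P}_f^n(\rho')) \to \mu'(X)\tau$, which gives convergence of integrals over arbitrary measurable $A$. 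Taking $m \to \infty$ and then $\varepsilon \to 0$ concludes both statements.

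The main obstacle I expect is the first step: producing the cylinder-set approximation at a common depth. Expansivity is stated only for bounded intervals, so it must be leveraged in two layers --- first approximating $\rho$ by a simple function on bounded intervals, then replacing each interval by a finite union of cylinders and refining to a common depth via Lemma \ref{Rendszer2}(7). A secondary technical point is keeping the masses well-controlled so that both $|\mu'(X) - \mu(X)|$ and $\|\rho - \rho'\|_1$ are simultaneously bounded by $\varepsilon$; these follow from the contraction and mass-preservation properties of $\mathscr{P}_f$ on non-negative functions.
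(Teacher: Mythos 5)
Your proposal is correct and follows essentially the same route as the paper's proof: approximate the density by a simple function, use expansivity to replace the level sets by finite unions of cylinders, observe that finitely many applications of $\mathscr{P}_f$ flatten the cylinder indicators onto the partition elements (the paper's Lemma \ref{push}), apply Corollary \ref{eloszlas} to the resulting piecewise-constant density, and close with a triangle inequality using the $L^1$-contraction/mass-preservation of $\mathscr{P}_f$. The only cosmetic difference is that you refine all cylinders to a common depth $n$, whereas the paper simply iterates one level at a time up to the maximal label length.
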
 

\begin{proof}
Let the density of $\mu$ be $\gamma$, such that $\mu(H)=\int_H \gamma d\lambda$. Let us approximate $\gamma$ by a simple function $\phi$ in the following way: \[
\phi = \sum_{k=1}^n c_k \chi_{E_k}, \quad \bigg| \int_H \phi \, d\lambda - \int_H \gamma \, d\lambda \bigg| < \varepsilon, \quad \forall H \in \mathscr{L}( X ).
\]

Here $E_k$ are measurable subsets of $X$ and $\chi_{E_k}$ is the characteristic function of $E_k$. Now we approximate the set $E_k$ by the union of finitely many closed intervals $R$ such that $\lambda(E_k \Delta R) < \delta$. 

\begin{lemma} \label{interaprox}
For all $\delta > 0$ and $E_k$ there is a union of finitely many closed intervals $R$ such that $\lambda(E_k \Delta R) < \delta$.
\end{lemma}
\begin{proof}
From the definition of the Lebesgue outer measure we can pick an $\{R_i\}_{i\in \mathbb{Z^+}}$ cover of $E_k$ with countably many closed intervals for which $E_k \subseteq \bigcup_{i\in \mathbb{Z^+}} R_i$ and $\sum_{i\in \mathbb{Z^+}} \lambda(R_i) < \lambda(E_k) + \frac{\delta}{2}$. In this case there is a finite union $R = \cup \{R_{i_k}\}_{k = 1}^N$ for which $R \Delta \bigcup_{i\in \mathbb{Z^+}} R_i < \frac{\delta}{2}$, which implies that $\lambda(E_k \Delta R) < \delta$.
\end{proof}

By Lemma \ref{interaprox} $\gamma$ can be approximated arbitrarily well by a function which is the linear combination of characteristic functions of finitely many closed intervals. Since $\mathscr{H}$ is expansive we can do it with intervals labelled by finite sequences as well.\medskip

Let us pick such an approximation $\omega$ of $\gamma$. Let $\Omega(H) = \int_H \omega\,d\lambda$. In this case the measure $\Omega$ is an approximation of $\mu$. For all $\varepsilon > 0$ we can pick $\omega$ such that  $| \Omega(H) - \mu(H) | < \varepsilon$ for all $H \in \mathscr{L}(X)$.

\begin{lemma} \label{push}
If $\mu:\: \mathscr{L}(X) \rightarrow \overline{\mathbb{R}}_{\geq 0}$ is such that $\mu(A) = \lambda(A \cap H_{i_0...i_n})$ for a fixed sequence of positive integers $(i_0, ..., i_n)$, then $\mu(f^{-1}(A)) = \frac{\lambda(A \cap H_{i_1...i_n})}{|f'(H_{i_0})|} $.
\end{lemma}

\begin{proof}
The density $\rho$ of this measure $\mu$ is $\chi_{H_{i_0...i_n}}$. We need to calculate $\mathscr{P}_f(\rho)$. By \eqref{szepPerron} we have the following: \[
\mathscr{P}_f(\rho)(x) = \sum_{z \in f^{-1}(x)} \frac{\chi_{H_{i_0...i_n}}(z)}{|f'(z)|}.
\]
If $\rho(z) \neq 0$, then $z \in H_{i_0}$. Also, if $f^{-1}(x) \cap H_{i_0...i_n} \neq \emptyset$, then $x \in H_{i_1...i_n}$. Since $f$ is injective on $\interior (H_{i_0})$, for almost every $x$ we have that $f^{-1}(x) \cap H_{i_0...i_n}$ has at most one element. These imply the following:\[
\mathscr{P}_f(\rho)(x) = \frac{\chi_{H_{i_1...i_n}}(x)}{|f'(H_{i_0})|}.
\]
\end{proof}

We have that $\omega$ is a linear combination of characteristic functions of finitely many labelled sets. This implies that for some $M \in \mathbb{N}$, for some $\{\eta_k\}_{k=1}^M \in \mathbb{R}_{\geq 0}^M$ and for some $\{(i^{(k)}_0,...,i^{(k)}_{n_k})\}_{k=1}^M$ positive integer sequences of finite length we can represent $\omega$ in the following way:\[
\omega = \sum_{k=1}^M \eta_k \, \chi_{H_{i^{(k)}_0...i^{(k)}_{n_k}}}.
\]

Assuming that $n_k \geq 1$ for all $k$, from Lemma \ref{push} we have that:
\begin{equation} \label{fuj}
\mathscr{P}_f(\omega) = \sum_{k=1}^M \eta_k  \frac{\chi_{H_{i^{(k)}_1...i^{(k)}_{n_k}}}}{|f'(H_{i^{(k)}_0})|}.
\end{equation}

Let $N = \max \{n_k\}_{k=1}^M$. From \eqref{fuj} we have that the pushforward of $\omega$ is still the linear combination of characteristic functions of finitely many labelled sets, however the maximal length of these labels is $N-1$. From the definition of the $\ell$-Markov partition the assumption that $n_k \geq 1$ for all $k$ did not violate generality since the pushforward of the characteristic function of an element of $\mathscr{H}$ is still the linear combination of characteristic functions of finitely many elements of $\mathscr{H}$. By these remarks, for $n>N$ we have that $\mathscr{P}_f^{n}(\omega)$ is a linear combination of characteristic functions of finitely many elements of $\mathscr{H}$. From now on we treat the two cases separately.\medskip

For the first case let us take any $A \in \mathscr{L}(X)$ for which there exists $N \in \mathbb{N}$, such that $\lambda (A \setminus \cup_{i=1}^N H_i ) = 0$. For the second case let us take an arbitrary $A \in \mathscr{L}(X)$. By the first and second cases of Corollary \ref{eloszlas} respectively it is implied that for all such $A$ there is a $K \in \mathbb{N}$, such that if $n>K$ then $|\Omega(f^{-n}(A)) - \Omega(X) \pi(A)| < \varepsilon$. Hence $|\mu(f^{-n}(A)) - \Omega(X) \pi(A)| < 2\varepsilon$. Then $|\mu(X)\pi(A) - \Omega(X) \pi(A)| < \varepsilon \pi(A) \leq \varepsilon$. From these $|\mu(f^{-n}(A)) - \mu(X) \pi(A)| < 3\varepsilon$.
\end{proof}

\subsection{Positive recurrent case}\label{pos_recurrent}

If the Markov chain is irreducible and aperiodic, and has at least one positive recurrent state then all states are positive recurrent. This implies that $\pi(H)=0$ implies $\lambda(H)=0$. Otherwise, if all states are null recurrent or transient, then $\pi(H)=0$ for all $H \in \mathscr{L}(X)$. From now on it is assumed that there is a given dynamical system $(X,f)$ with the properties stated in Theorem \ref{nagyhatar} and that all states of the Markov chain defined by its partition are positive recurrent.

\begin{cor} \label{mix}
A dynamical system which satisfies all conditions of Case 2 of Theorem \ref{nagyhatar} with the corresponding measure $\pi$ is mixing.
\end{cor}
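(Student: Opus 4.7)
The plan is to reduce mixing to the already-proved Case 2 of Theorem~\ref{nagyhatar} by an appropriate choice of absolutely continuous measure. Mixing with respect to $\pi$ is the assertion that for every pair $A, B \in \mathscr{L}(X)$,
\[
\lim_{n \to \infty} \pi(A \cap f^{-n}(B)) = \pi(A)\pi(B).
\]
Before chasing this I would record the two standing facts that make the statement meaningful: positive recurrence together with Corollary~\ref{kovetkeztetes} yields a stochastic stationary vector $\Pi$ for $P$, which forces $\pi(X) = 1$, and via Theorem~\ref{perron} the identity $\Pi = \Pi P$ translates into $\mathscr{P}_f(\tau) = \tau$ almost everywhere, so $\pi$ is genuinely an $f$-invariant probability measure.

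The key trick is then to absorb the set $A$ into the density. For fixed $A \in \mathscr{L}(X)$, define
\[
\mu_A(E) := \pi(A \cap E) = \int_E \chi_A \cdot \tau \, d\lambda.
\]
Then $\mu_A$ is absolutely continuous with density $\chi_A \cdot \tau$, and its total mass is $\mu_A(X) = \pi(A) \leq 1 < \infty$. By construction $\mu_A(f^{-n}(B)) = \pi(A \cap f^{-n}(B))$. The structural hypotheses of Case~2 of Theorem~\ref{nagyhatar} on the system $(X,f)$ (expansive $\ell$-Markov partition, aperiodic irreducible positive recurrent Markov chain) are inherited unchanged, so applying that case with $\mu_A$ in place of $\mu$ and $B$ in place of $A$ gives
\[
\mu_A(f^{-n}(B)) \longrightarrow \mu_A(X)\, \pi(B) = \pi(A)\pi(B),
\]
which is exactly the mixing conclusion.

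The only point that requires any checking is that $\mu_A$ is really admissible as input to Theorem~\ref{nagyhatar}, i.e.\ an absolutely continuous measure of finite total mass. Finiteness of $\pi(A)$ is precisely what the positive recurrence hypothesis buys us; I do not foresee any genuine obstacle beyond this bookkeeping, since the corollary is essentially a one-line specialization of Case~2 of the convergence theorem to densities of the form $\chi_A \cdot \tau$.
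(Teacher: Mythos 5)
Your proof is correct and follows essentially the same route as the paper: restrict $\pi$ to one of the two sets to obtain a finite absolutely continuous measure, then apply Case 2 of Theorem~\ref{nagyhatar} to that measure and read off the mixing identity. The extra remarks you include (that $\pi(X)=1$ and that $\pi$ is invariant) are harmless additional bookkeeping that the paper leaves implicit.
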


\begin{proof}
Define the measure $\mu(H) = \pi(H \cap B)$ for all $H \in \mathscr{L}(X)$ and some $B \in \mathscr{L}(X)$. From Theorem \ref{nagyhatar}:\[
\lim_{k \rightarrow \infty} \mu(f^{-k}(A)) = \mu(X)\pi(A)
\]
for all $A \in \mathscr{L}(X)$. This can be rewritten as:\[
\lim_{k \rightarrow \infty} \pi(f^{-k}(A) \cap B) = \pi(B)\pi(A).
\]
This is the definition of mixing.
\end{proof}

\begin{lemma} \label{nulltarto}
If $\lambda(H) = 0$, then $\lambda(f^{-1}(H)) = 0$ and $\lambda(f(H)) = 0$. 
\end{lemma}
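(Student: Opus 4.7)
The plan is to exploit the fact that on the interior of each element $H_i$ of the $\ell$-Markov partition, $f$ is linear with a nonzero slope $f'(H_i)$, hence bi-Lipschitz, so it preserves null sets in both directions. The only technicality is dealing with the countable set of boundary points, which is itself null.

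First I would argue that $|f'(H_i)| \neq 0$ for every $H_i \in \mathscr{H}$. By Definition \ref{Markovpart}(3), for each $H_i$ there is a non-empty $\mathscr{A} \subseteq \mathscr{H}$ with $\interior(f(H_i)) = \interior(\cup \mathscr{A})$, which has non-empty interior; since $f$ is linear on $\interior(H_i)$, its slope cannot vanish.

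Next, for $\lambda(f(H)) = 0$: decompose $H = \bigcup_{i} (H \cap H_i)$. On $\interior(H_i)$ the map $f$ is affine with slope $f'(H_i)$, so for any measurable $E \subseteq \interior(H_i)$ we have $\lambda(f(E)) = |f'(H_i)| \cdot \lambda(E)$. Since $\lambda(H \cap \interior(H_i)) \leq \lambda(H) = 0$, each $\lambda(f(H \cap \interior(H_i))) = 0$. The remaining part of $H$ lies in $\bigcup_i \partial H_i$, which is countable and therefore null, and its image under $f$ is at most countable, hence null. A countable union of null sets is null, so $\lambda(f(H)) = 0$.

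For $\lambda(f^{-1}(H)) = 0$: write $f^{-1}(H) = \bigcup_{i} \bigl( f^{-1}(H) \cap \interior(H_i) \bigr) \cup \bigl( f^{-1}(H) \cap D_0 \bigr)$, where $D_0 = \bigcup_i \partial H_i$ is countable and therefore null. On each $\interior(H_i)$, $f$ is an affine homeomorphism with slope $f'(H_i)$, so the restricted inverse is Lipschitz with constant $1/|f'(H_i)|$, giving
\[
\lambda\bigl(f^{-1}(H) \cap \interior(H_i)\bigr) = \frac{1}{|f'(H_i)|}\, \lambda\bigl(H \cap f(\interior(H_i))\bigr) \leq \frac{\lambda(H)}{|f'(H_i)|} = 0.
\]
Again a countable union of null sets is null, so $\lambda(f^{-1}(H)) = 0$.

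There is no real obstacle; the only subtlety is keeping track of the boundary set $D_0$ to justify that the piecewise linearity statement covers all but a null portion of the space. Everything else is just the elementary fact that an affine map with nonzero slope scales Lebesgue measure by $|f'|$.
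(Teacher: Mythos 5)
Your proof is correct and is essentially the argument the paper has in mind: the paper's own proof just asserts the result follows from linearity with nonzero slope on each of the countably many interiors plus the countability of the complement of their union, and you have written out exactly that decomposition in detail. The extra care you take to justify $|f'(H_i)| \neq 0$ via condition 3 of Definition \ref{Markovpart} is a welcome addition.
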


\begin{proof}
This follows straightforward from that the function is linear with a non-zero slope on every $\interior (H_i) \in \mathscr{H}$, where $\mathscr{H}$ is countable, and that $X \setminus \cup \{\interior (H_i): H_i \in \mathscr{H}\}$ is a countable set. 
\end{proof}

\begin{theorem} \label{invarhalmaz}
If there exists an $E \in \mathscr{L}(X)$, such that $\lambda( f(E) \setminus E ) = 0$, then either $E$ or its complement $X\setminus E = E^c$ has Lebesgue measure zero.
\end{theorem}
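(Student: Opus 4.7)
The proof will combine three ingredients: the mixing property of Corollary \ref{mix}, the mutual equivalence of $\lambda$ and $\pi$ in the positive recurrent case noted at the start of this subsection, and an iterated version of the forward invariance hypothesis.

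First I plan to upgrade the given $\lambda(f(E)\setminus E)=0$ to the assertion that $\lambda(E\cap f^{-n}(E^c))=0$ for every $n\geq 0$. I would prove $\lambda(f^n(E)\setminus E)=0$ for all $n\geq 1$ by induction: assuming it for $n$, Lemma \ref{nulltarto} gives $\lambda(f(f^n(E)\setminus E))=0$, and
\[
f^{n+1}(E) = f\bigl(f^n(E)\cap E\bigr) \cup f\bigl(f^n(E)\setminus E\bigr) \subseteq f(E) \cup (\text{null set}) \subseteq E \cup (\text{null set}),
\]
so $\lambda(f^{n+1}(E)\setminus E)=0$. Now, given $x\in E$ with $f^n(x)\in E^c$, take the least $k\leq n$ such that $f^k(x)\notin E$; by minimality $f^{k-1}(x)\in E$ and $f(f^{k-1}(x))\in f(E)\setminus E$, so $f^{k-1}(x)$ lies in the Lebesgue-null set $N:=E\cap f^{-1}(f(E)\setminus E)$. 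Hence $x\in f^{-(k-1)}(N)$, which is null by iterating Lemma \ref{nulltarto}; taking the union over $k\in\{1,\dots,n\}$ yields $\lambda(E\cap f^{-n}(E^c))=0$.

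With this in hand, I would invoke Corollary \ref{mix} with $A=E^c$ and $B=E$, which gives $\pi(f^{-k}(E^c)\cap E)\to \pi(E^c)\pi(E)$ as $k\to\infty$. Each term on the left vanishes, since $\lambda(f^{-k}(E^c)\cap E)=0$ by the previous paragraph and $\pi\ll\lambda$, so $\pi(E)\pi(E^c)=0$. The mutual equivalence of null sets for $\pi$ and $\lambda$ recorded at the start of this subsection then converts this into $\lambda(E)=0$ or $\lambda(E^c)=0$.

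The main technical hurdle will be the inductive step upgrading one-step forward invariance: I must be careful with the accumulating null sets when tracking how membership in $E$ propagates under $f^n$. Once that is in place, the conclusion is a one-line application of Corollary \ref{mix} combined with mutual absolute continuity.
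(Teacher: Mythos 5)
Your proposal is correct and follows essentially the same route as the paper: both arguments reduce to showing $\lambda(E\cap f^{-n}(E^c))=0$ for every $n$ via Lemma \ref{nulltarto} and the hypothesis $\lambda(f(E)\setminus E)=0$, and then contradict the convergence $\pi(f^{-n}(E^c)\cap E)\to\pi(E)\pi(E^c)$ (you cite Corollary \ref{mix}, the paper applies Theorem \ref{nagyhatar} to $\mu(H)=\pi(H\cap E)$, which is the same computation). The only differences are cosmetic: your preliminary induction on $\lambda(f^n(E)\setminus E)$ is not actually needed, and your first-exit-time covering of $E\cap f^{-n}(E^c)$ by the sets $f^{-(k-1)}(N)$ replaces the paper's direct induction on $\lambda(f^{-n}(E^c)\cap E)$.
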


\begin{proof}
Proceeding towards a contradiction let us assume that such a set $E$ exists with $\lambda(E)\neq 0 \neq \lambda(E^c)$. We define a measure $\mu(H) = \pi(H \cap E)$. Then we have that $\mu$ is absolutely continuous and Corollary \ref{stoch} implies that $0 < \mu(X) = \pi(E) < \infty$. Then by Theorem \ref{nagyhatar} $\lim_{n\rightarrow \infty}\mu(f^{-n}(E^c)) = \pi(E^c)\cdot \mu(X) = \pi(E^c)\cdot \pi(E) > 0$.\medskip

Furthermore $\lambda( f(E) \setminus E ) = \lambda( f(E) \cap E^c ) = 0$. We have that $f(f^{-1}(E^c) \cap E) \subseteq E^c \cap f(E)$. This implies, that $\lambda(f(f^{-1}(E^c) \cap E)) = 0$. From Lemma \ref{nulltarto} it follows that $\lambda (f^{-1}(E^c) \cap E) = 0$.\medskip

Let us suppose that $\lambda(f^{-n}(E^c) \cap E) = 0$. We show that $\lambda(f^{-n-1}(E^c) \cap E) = 0$. We have that $f^{-n-1}(E^c) = f^{-1}(f^{-n}(E^c) \cap E^c) \cup f^{-1}(f^{-n}(E^c) \cap E)$. We can deduce that $\lambda(f^{-n-1}(E^c) \cap E) \leq \lambda(f^{-1}(f^{-n}(E^c) \cap E^c) \cap E) + \lambda(f^{-1}(f^{-n}(E^c) \cap E) \cap E) \leq \lambda(f^{-1}(E^c) \cap E) = 0$.\medskip

This by induction proves that for all positive $n$ we have $\lambda(f^{-n}(E^c) \cap E) = 0$, which implies that $\mu(f^{-n}(E^c)) = \pi(f^{-n}(E^c) \cap E) = 0$. This contradicts that $\lim_{n\rightarrow \infty}\mu(f^{-n}(E^c)) >0$. 
\end{proof}

In the following theorem we discuss invariant measures which are absolutely continuous with respect ot $\lambda$. According to the most general form of the Radon-Nikdym theorem, these measures have a density $\rho: X \rightarrow \overline{\mathbb{R}}_{\geq 0}$. These measures are not necessarily $\sigma$-finite, and $\sigma$-finiteness is usually assumed when discussing absolutely continuous invariant measures, however having proven Theorem \ref{invarhalmaz} we can describe the non $\sigma$-finite cases as well.

\begin{theorem}
Suppose that $\mu$ is an invariant measure on $\mathscr{L}(X)$ and there exists a density $\rho: X \rightarrow \overline{\mathbb{R}}_{\geq 0}$, such that $\mu(A) = \int_A \rho d\lambda$ for all $A \in \mathscr{L}(X)$. If $\mu(X) = \infty$ and if $\lambda \ll \pi$, then for all $A \in \mathscr{L}(X),\, \lambda(A)\neq 0$ it is true that  $\mu(A)=\infty$.
\end{theorem}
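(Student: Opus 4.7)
The plan is to argue by contradiction: assume there exists $A_0 \in \mathscr{L}(X)$ with $\lambda(A_0) > 0$ and $\mu(A_0) < \infty$, and derive that $\rho$ is simultaneously finite and infinite $\lambda$-a.e.

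First I would establish that $\rho < \infty$ $\lambda$-a.e. Let $W := \{\rho = \infty\}$. The invariance $\mathscr{P}_f \rho = \rho$ combined with formula \eqref{szepPerron} forces $\lambda(f(W) \setminus W) = 0$: if $y = f(z) \in f(W) \setminus W$ then $\rho(z) = \infty$ while $\rho(y) < \infty$, so $\sum_{z' \in f^{-1}(y)} \rho(z')/|f'(z')|$ contains an infinite summand and cannot equal the finite $\rho(y)$, placing $y$ in the $\lambda$-null exception set where $\mathscr{P}_f \rho \neq \rho$. Theorem \ref{invarhalmaz} then yields $\lambda(W) = 0$ or $\lambda(W^c) = 0$; the latter would give $\mu(A_0) \geq \infty \cdot \lambda(A_0) = \infty$, contradicting $\mu(A_0) < \infty$, so $\lambda(W) = 0$.

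The crucial step is the pointwise lower bound: for every $A \in \mathscr{L}(X)$ with $\lambda(A) > 0$ and $\mu(A) < \infty$,
\[
\rho \geq \mu(A)\,\tau \qquad \lambda\text{-a.e.}
\]
I would prove this by passing to the finite absolutely continuous restriction $\nu(B) := \mu(B \cap A)$, which has $\nu(X) = \mu(A) < \infty$. Theorem \ref{nagyhatar} in the positive recurrent case applies to $\nu$ and gives $\nu(f^{-n}(B)) \to \mu(A)\pi(B)$ for every $B \in \mathscr{L}(X)$. On the other hand $\nu \leq \mu$ and $\mu$ is $f$-invariant, so $\nu(f^{-n}(B)) \leq \mu(f^{-n}(B)) = \mu(B)$; letting $n \to \infty$ yields $\mu(B) \geq \mu(A)\pi(B)$ for every $B$, which at the density level is the claimed inequality. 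I expect this coupling of the pushforward-convergence theorem for a cleverly restricted finite measure with the $f$-invariance of the majorant $\mu$ to be the main conceptual move in the proof.

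To close the argument I would produce sets of arbitrarily large but finite $\mu$-mass. Set
\[
A_n := \{0 < \rho \leq n\} \cap \bigcup_{i=1}^{n} H_i.
\]
Then $\mu(A_n) \leq n \sum_{i=1}^n \lambda(H_i) < \infty$, and by the first step the sets $A_n$ increase $\lambda$-a.e. to $\{\rho > 0\}$, on which $\int \rho \, d\lambda = \mu(X) = \infty$, so $\mu(A_n) \uparrow \infty$ by monotone convergence. Applying the previous step to each $A_n$ and passing to the supremum in $n$ gives $\rho(x) = \infty$ whenever $\tau(x) > 0$. Finally, the hypothesis $\lambda \ll \pi$ ensures $\lambda(\{\tau = 0\}) = 0$, since $\{\tau = 0\}$ is $\pi$-null, so $\rho = \infty$ $\lambda$-a.e., contradicting the first step.
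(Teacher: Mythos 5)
Your proof is correct, and its two pillars are exactly those of the paper's argument: Theorem \ref{invarhalmaz} applied to the set where the density is infinite, and the restriction trick $\nu(B)=\mu(B\cap A)$ combined with Case 2 of Theorem \ref{nagyhatar} and invariance to obtain $\mu(B)\geq \mu(A)\pi(B)$ (the paper's version is $\mu(H)\geq N\pi(H)$ for sets $E$ with $N<\mu(E)<\infty$, and your sets $A_n$ reproduce the paper's lemma producing such sets). The one genuine divergence is how Theorem \ref{invarhalmaz} is brought to bear on $W=\{\rho=\infty\}$: you prove $\lambda(f(W)\setminus W)=0$ directly from the transfer-operator formula \eqref{szepPerron} (an infinite summand forces $\mathscr{P}_f\rho=\infty$ off a null set), so the theorem applies in its stated form, whereas the paper argues by contradiction from its contrapositive, extracting $E\subset f(O)\setminus O$ with $0<\lambda(E)$ and $\mu(E)<\infty$ and showing $\mu(f^{-1}(E))=\infty$, violating invariance. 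Your route is slightly cleaner in that it isolates the forward-invariance of $W$ as an explicit fact (modulo the countable set of non-differentiability points, which you should mention discarding); the paper's route avoids any appeal to the pointwise formula for $\mathscr{P}_f$ on the infinite-density set. Both are complete proofs of the statement.
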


\begin{proof}
We start by stating a lemma.
\begin{lemma}
Let $\gamma$ be the density of $\mu$ and $O = \{x\in X: \gamma(x)=\infty\}$. If $\lambda(O) = 0$ then for all $N$ there exists a measurable set $E\subset X$, such that $\infty > \mu(E) > N$.
\end{lemma}

\begin{proof}
Let us first consider the sequence $\int_{[-n,n]\cap X} \gamma\, d\lambda$. If $n$ goes to infinity the limit must be infinity. If there is an $N$ such that there is no such $n$ for which $\infty > \int_{[-n,n]\cap X} \gamma\, d\lambda > N$, then there exists an $N_0$ for which $\int_{[-N_0,N_0]\cap X} \gamma\, d\lambda = \infty$. Then let $A_k = \{x \in [-N_0,N_0]\cap X:\: \gamma(x)<k\}$. $\lim_{k\rightarrow \infty} \mu(A_k) = \infty$ but $\mu(A_k) < 2N_0k$, which implies that for all $N$ there must be a $K$ such that $\infty > \mu(A_K) > N$.
\end{proof}

If $\mu(X)=\infty$ and for all $N$ there exists a measurable set
$E\subset X$, such that $\infty > \mu(E) > N$, then let $\mu'(H) = \mu(H \cap E)$. Now $\mu(H) = \mu'(H) + \mu(H \cap E^c)$ and $\mu'(X) < \infty$. Since $\mu$ is an invariant measure, for all $H \in \mathscr{L}(X)$ we have that $\lim_{n\rightarrow \infty} \mu(f^{-n}(H)) = \mu(H)$. Also $\mu(H) = \lim_{n\rightarrow \infty} \mu(f^{-n}(H)) \geq \lim_{n\rightarrow \infty} \mu'(f^{-n}(H)) > N\cdot \pi(H)$. In this case the statement is proven. This case also implies the statement if $\mu$ is $\sigma$-finite.\medskip

If there exists an $N$ such that we cannot find a set with a finite $\mu$ measure greater than $N$, then by the previous lemma $\lambda(O) > 0$ for $O = \{x\in X: \gamma(x)=\infty\}$. If $\lambda(O^c) = 0$ then the proof is finished. If $O$ and $O^c$ have non-zero measure, then \ref{invarhalmaz} implies that $\lambda(f(O) \cap O^c) \neq 0$ and $\lambda (f(O^c) \cap O) \neq 0$. Hence, there exists $E \subset f(O) \setminus O$ such that $\lambda(E) \neq 0$ and $\mu(E) < \infty$. Moreover $E$ is a subset of $f(O)$. Since $\lambda(E) \neq 0$ and $E \subseteq f(f^{-1}(E) \cap O)$ then $\lambda (f^{-1}(E)\cap O) \neq 0$ must be true, thus $\mu(f^{-1}(E))=\infty$. This contradicts that $\mu$ is invariant.
\end{proof}

\subsection{Relation to previous results}\label{prev_results}
As stated previously the uniqueness of this approach is in that it describes even the non-$\sigma$-finite absolutely continuous invariant measures of piecewise linear Markov maps, and that in the finite case it shows the convergence of the pushforwards of any absolutely continuous measure. For the latter a very general result is stated in \cite{Bugiel1998}, concerning not only piecewise linear Markov maps. In this subsection an example of a piecewise linear Markov map is given, for which Theorem 3.1 of \cite{Bugiel1998} does not apply, however Theorem \ref{nagyhatar} does.

\begin{exam} \label{savior}
Take the random walk on $\mathbb{Z}^+$ where from $n \geq 3$ we step on $n+1, n, n-1$ or $n-2$ with the probability of each being $\nicefrac{1}{4}$, and from one or two we step onto an element of $\{1,2,3,4\}$ with the probability of each being $\nicefrac{1}{4}$. Take a function which defines a transition matrix according to Definition \ref{Transmatrix} defining this Markov chain.
\begin{equation}
f(x) =
   \begin{cases} 
      4(x-\lfloor x \rfloor) & \text{if } 0 \leq x < 3, \\
      4x - 3\lfloor x \rfloor -2 & \text{if } 3 \leq x.
   \end{cases}
\end{equation}
For a plot of this function see figure \ref{kek}. The set of integer intervals $\{I_k \}$ form an expansive $\ell$-Markov partition of the domain of this function and Theorem \ref{nagyhatar} applies to $(\mathbb{R}_{\geq 0}, f)$ in a straightforward way.
\end{exam}
\begin{figure}[h]
\includegraphics[width= 5cm]{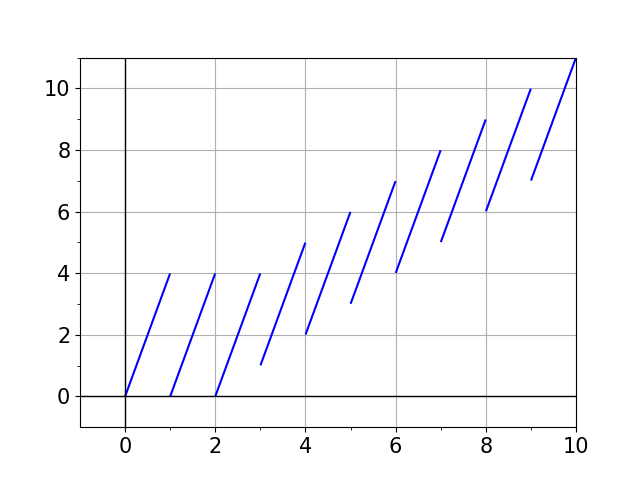}
\centering
\caption{Plot of $f(x)$ from example \ref{savior}.}
\label{kek}
\end{figure}

Let $k(r) = (k_0,k_1,...k_{r-1})$ an $r$ long multi-index, we denote $I_{k_0k_1...k_{r-1}}$ as $I_{k(r)}$. Let the set of all  $r$ long multi-indeces, such that the indeces are positive integers be $K^r$. We define a number of objects used in the statement of Theorem 3.1 of \cite{Bugiel1998} and explain what they are in our special case.
\begin{equation}
I_{k(r)}= I_{k_0} \cap f^{-1}(I_{k_1}) \cap ... \cap f^{-r-1}(I_{k_{r-1}})
\end{equation}
\begin{equation}
f_{k(r)} := (f^r)|_{I_{k(r)}}, \quad J_{k(r)} := f_{k(r)}(I_{k(r)}) = f(I_{k_{r-1}}).
\end{equation}
\begin{equation}
m_{k(r)}(A) := \lambda((f_{k(r)}^{-1})(A)).
\end{equation}
\begin{equation}
\sigma_{k(r)} :=
\begin{cases}
      \frac{dm_{k(r)}}{d\lambda} & \text{on } J_{k(r)}, \\
      0 & \text{otherwise }
\end{cases},\quad
\tilde{\sigma}_{k(r)} := \frac{\sigma_{k(r)}}{\lambda(I_{k(r)})}.
\end{equation}
In our special case where the function is linear on all elements of the partition we have the following:
\begin{equation}
m_{k(r)}(A) = \frac{\lambda(A \cap J_{k(r)})}{|(f^r)'(I_{k(r)})|} = \frac{\lambda(I_{k(r)})}{\lambda(J_{k(r)})} \lambda(A \cap J_{k(r)}), \quad \tilde{\sigma}_{k(r)} = \frac{1}{\lambda(J_{k(r)})} \chi_{J_{k(r)}}.
\end{equation}
The last two expressions needed are the following:
\begin{equation}
g_{k(r)} := \sum_{s(r)} \tilde{\sigma}_{s(r)} \int_{I_{s(r)}} \tilde{\sigma}_{k(r)} d\lambda, \quad u_{r}(x) := \inf \{g_{k(r)}(x) : I_{k(r)} \neq \emptyset\}.
\end{equation}
Where the sum and supremum is taken on all $r$-long multi-indices. In our special case we have the following:
\begin{equation} \label{t8}
g_{k(r)}(x) = \sum_{s(r)} \frac{\chi_{J_{s(r)}}(x) \lambda(I_{s(r)} \cap J_{k(r)})}{\lambda(J_{s(r)}) \lambda(J_{k(r)})}.
\end{equation}
Any element of this sum is supported on $J_{s(r)}$. Only those elements are non-zero, for which $\lambda (I_{s(r)} \cap J_{k(r)}) \neq 0$. Since $J_{k(r)} = f(I_{k_{r-1}})$, from the definition of an $\ell$-Markov partition there is a subset $S \subset K^r$, such that $J_{k(r)} = \bigcup_{s(r) \in S} I_{s(r)}$. Thus $\bigcup_{s(r) \in S} J_{s(r)} = f^r(J_{k(r)})$. It follows, that $g$ in \eqref{t8} is a function supported on $f^r(J_{k(r)}) = f^{r+1}(I_{k_{r-1}})$. In the case of $f$ in Example \ref{savior} we have that for all $x$ and for all $r$ there exists $k(r)$ such that $x \notin f^{r+1}(I_{k_{r-1}})$. Namely we choose $I_{k(r)} \neq \emptyset$ such that $k_{r-1} > \lceil x \rceil + 2r + 4$ applies. Thus for all $r$ and $x$ we have $u_r(x) = 0$. One of the conditions of Theorem 3.1 in \cite{Bugiel1998} is that there exists $r>1$ such that $||u_r||_{L^1} > 0$. This does not apply in this case.

\section{Applications to our system} \label{sec4}
In this section, we compute the unique absolutely continuous invariant measure of $(\mathbb{R}_{\geq 0}, T)$, the existence of which is implied by \ref{nagyhatar}. We also show how is the solution of the original Schweitzer problem implied by our results in Corollary \ref{solution}.

\begin{theorem}
The closed integer intervals form an expansive $\ell$-Markov partition of $(\mathbb{R}_{\geq 0}, T)$.
\end{theorem}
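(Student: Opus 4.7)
The plan is to verify the four conditions of Definition \ref{Markovpart} for $\mathscr{H} = \{I_n\}_{n \in \mathbb{Z}^+}$ where $I_n = [n-1, n]$, and then invoke the remark following Theorem \ref{expans2} to conclude expansivity. Conditions (1), (2), and (4) are immediate: the interiors $\interior(I_k) = (k-1, k)$ are pairwise disjoint, the union is $\bigcup_{n \geq 1} [n-1, n] = \mathbb{R}_{\geq 0}$, and linearity of $T$ on $\interior(I_k)$ is part of the problem's definition.

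For condition (3), I would argue that $T(I_k)$ is always a finite union of partition elements. Since $T$ is linear on $I_k$, its image is the closed interval with endpoints $T(k-1)$ and $T(k)$. The defining formula gives integer values at integers: $T(n) = 0$ if $n$ is even, and $T(n) = 4^{\ell(n)} + 1$ if $n$ is odd. Because $k-1$ and $k$ have opposite parities, exactly one of $T(k-1), T(k)$ equals $0$ and the other equals some integer $N_k \in \{4^\ell + 1 : \ell \in \mathbb{Z}^+\}$. Therefore $T(I_k) = [0, N_k] = \bigcup_{j=1}^{N_k} I_j$, so $\interior(T(I_k)) = \interior\bigl(\bigcup_{j=1}^{N_k} I_j\bigr)$, as required.

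For expansivity, I would apply the remark after Theorem \ref{expans2}. Each partition element has $\lambda(I_k) = 1$, so measures are uniformly bounded. Since $T$ sends the unit-length interval $I_k$ linearly onto $[0, N_k]$ with $N_k = 4^\ell + 1 \geq 5$ for some $\ell \geq 1$, the slope satisfies $|T'(I_k)| = N_k \geq 5$. Choosing $\varepsilon = 1$ (or indeed any value up to $4$) gives $|T'(I_k)| > 1 + \varepsilon$ uniformly, and the remark immediately yields that $\mathscr{H}$ is expansive.

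There is no substantial obstacle here; the only care needed is in condition (3), where one must verify in both parity cases that the image interval has $0$ as one endpoint and a positive integer as the other, thereby aligning with the partition. Everything else is either bookkeeping or a direct appeal to the cited remark.
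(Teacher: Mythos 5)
Your proof is correct. The paper disposes of this theorem in one line by appealing to Lemma \ref{Rendszer} (the ordering properties of the labelled intervals $I_{i_0\ldots i_n k}$ imported from the companion paper), whereas you verify the four conditions of Definition \ref{Markovpart} directly and then get expansivity from the remark following Theorem \ref{expans2}, using $\lambda(I_k)=1$ and the uniform slope bound $|T'(I_k)| = 4^{\ell}+1 \geq 5$. The mathematical substance is the same in both routes --- the key observation is that $T$ maps each unit interval $I_k$ linearly onto $[0,\,4^{\ell}+1]$, with the even endpoint sent to $0$ and the odd endpoint to $4^{\ell}+1$, so the image is exactly a union of partition elements and the map is uniformly expanding --- but your version is self-contained where the paper's is essentially a citation, and your explicit parity check in condition (3) is exactly the point that needs care.
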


\begin{proof}
The properties established in Lemma \ref{Rendszer} satisfy all conditions needed.
\end{proof}

\begin{theorem}
Consider the closed integer intervals as the $\ell$-Markov partition of our system. Then the transition matrix $P = (p_{ij})$ of $(\mathbb{R}_{\geq 0}, T)$ is the following:
\begin{equation}
p_{ij} =
\begin{cases}
		\frac{1}{4^{ \ell (i) }+1} & \text{if } j \in [1, 4^{\ell(i)}+1]\\
		0 & \text{otherwise. } 
	\end{cases}
\end{equation}
\end{theorem}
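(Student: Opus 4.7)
The plan is to apply Definition \ref{Transmatrix} directly, exploiting the fact that the $\ell$-Markov partition consists of the unit intervals $I_i = [i-1, i]$, so $\lambda(I_i) = 1$ for every $i \in \mathbb{Z}^+$. With these lengths cancelling, \eqref{transprob} collapses to $p_{ij} = 1/|T'(I_i)|$ whenever $\interior(I_j) \subseteq T(I_i)$, and $p_{ij} = 0$ otherwise. The proof therefore reduces to two ingredients: computing the slope magnitude of $T$ on each $I_i$, and identifying the image $T(I_i)$ as a union of integer intervals.

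For the slope I would use that $T$ is linear on $I_i$ with boundary values $T(i-1)$ and $T(i)$, exactly one of which is zero since $i-1$ and $i$ have opposite parities. If $i$ is odd then $T(i-1) = 0$ and $T(i) = 4^{\ell(i)} + 1$, giving $|T'(I_i)| = 4^{\ell(i)} + 1$ at once. If $i$ is even then $T(i) = 0$ and $T(i-1) = 4^{\ell(i-1)} + 1$; to turn this into a uniform formula I would check that $\ell(i-1) = \ell(i)$ whenever $i$ is even. This is the one delicate step: the function $\ell$ jumps in value only when its argument crosses a power of $4$, i.e.\ between $4^{k}$ and $4^{k} + 1$ for $k \geq 1$, and since $4^k$ is even for $k \geq 1$, every such jump occurs across a pair (even, odd) rather than (odd, even). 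The base values $\ell(1) = \ell(2) = 1$ cover the only remaining case. Hence $|T'(I_i)| = 4^{\ell(i)} + 1$ for every $i$.

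For the image, the linearity of $T$ on $I_i$ together with the endpoint values $\{0,\, 4^{\ell(i)}+1\}$ gives $T(I_i) = [0, 4^{\ell(i)}+1] = \bigcup_{j=1}^{4^{\ell(i)}+1} I_j$. Therefore $\interior(I_j) \subseteq T(I_i)$ precisely for $j \in \{1, \dots, 4^{\ell(i)}+1\}$. Substituting these two facts into the simplified version of \eqref{transprob} produces $p_{ij} = 1/(4^{\ell(i)}+1)$ on this range of indices and $p_{ij} = 0$ outside, which is exactly the claim. I expect the parity argument establishing $\ell(i-1) = \ell(i)$ for even $i$ to be the only step that requires any care; everything else is a direct substitution.
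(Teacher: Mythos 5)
Your proof is correct and is exactly the routine verification that the paper leaves implicit (the theorem is stated without proof there): one computes $|T'(I_i)|=4^{\ell(i)}+1$ and $T(I_i)=[0,4^{\ell(i)}+1]$ and substitutes into Definition \ref{Transmatrix}, using $\lambda(I_j)=1$. Your parity check that $\ell(i-1)=\ell(i)$ for even $i$ (since $\ell$ only jumps between $4^k$ and $4^k+1$, an even--odd pair) is the one nontrivial point, and you handle it correctly, including the boundary case $\ell(1)=\ell(2)=1$.
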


This Markov chain is irreducible and aperiodic. By finding a stationary distribution with positive coordinates, we will also show that all states are positive recurrent.

\begin{defin}
We define the $n$th step of the function. It is a closed interval. It is the union of closed integer intervals, the $T$ image of which are identical. Explicitly:\[
L_1 = [0, 4] \quad
L_n = [4^{n-1}, 4^n].
\]
Let $\mathcal{L}_n$ be the set of indices of those integer intervals $I_k$ for which $I_k \subset L_n$. 

\end{defin}

Let us construct the transition matrix $S$ of a new Markov chain the states of which will represent the "steps" of the function $T$. It is interesting to study it a bit more generally. Let us introduce a parameter $E\in \mathbb{Z}_{\geq 2}$. The matrix $S$ will describe our system if $E=4$. For $k>1$, let: 
\begin{equation} \label{Lambda}
\Lambda(1)=E \text{ and } \Lambda(k)=(E-1)E^{k-1}.
\end{equation}
If $E=4$ we have that $\Lambda(n) = \lambda(L_n)$. We define $S=(s_{ij})$ by:
\begin{equation} \label{Smatrix}
s_{ij} =
\begin{cases}
		\frac{\Lambda(j)}{E^i + 1} & \text{if } j \leq i,\\
		\frac{1}{E^{i}+1} & \text{if } j = i+1,\\
		0 & \text{otherwise.} 
	\end{cases}
\end{equation}

The reader might wonder why this chain is of particular interest to us. Suppose that we have a measure $\mu$ with finite density $\rho$, which is constant on every integer interval. Since the image of all integer intervals in a step is identical, to calculate the density of the pushforward of $\mu$, we only need to know $\mu(L_k)$ for every $k$. Let the constants of $\rho$ be stored in a  countably infinite dimensional row vector $v = (v_1, v_2, ...)$, as in Theorem \ref{perron}. From Theorem \ref{perron} we know that the constants of the pushforward of $\mu$ are in the vector $vP$. Let us define the vector $w^{(v)} = (w^{(v)}_1, w^{(v)}_2, ...)$ such that $w^{(v)}_k = \sum_{i \in \mathcal{L}_k}v_i = \mu(L_k)$. Hence we have that if $E=4$, then $\mu(T^{-1}(L_k)) = (w^{(v)}S)_k$, thus $w^{(v)}S = w^{(vP)}$. If for an infinite dimensional row vector $p$ we have $pP = p$, meaning that $p$ is a stationary distribution of $P$, then $w^{(p)}S = w^{(p)}$, so $w^{(p)}$ is a stationary distribution of $S$.

\begin{theorem} \label{Qstac}
The only possible $q = (q_1, q_2, ...)$ stochastic, countably infinite dimensional row vector, for which $q = qS$ is given by:\[
q_1 =\frac{1}{2}, \quad q_{k+1}=\frac{E}{E^k+1}q_{k}, \quad k\in \mathbb{Z}^+.
\]
This is equivalent to saying that it is the only stationary distribution of the Markov chain defined by $S$.
\end{theorem}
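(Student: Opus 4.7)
The plan is to exploit a probability-flux argument across the cut $\{1,\dots,j\} \mid \{j+1,j+2,\dots\}$, which sidesteps the infinite sums appearing directly in the stationarity equations. For a stochastic $q$ with $q = qS$, the net steady-state flow across this cut vanishes, giving
\[
q_j\, s_{j,j+1} \;=\; \sum_{i \geq j+1}\sum_{k=1}^{j} q_i\, s_{ik} \qquad (j \geq 1).
\]
The left side equals $q_j/(E^j+1)$. On the right, $i \geq j+1 > k$ forces $s_{ik} = \Lambda(k)/(E^i+1)$, so the double sum factors as $\bigl(\sum_{k=1}^j \Lambda(k)\bigr) R_{j+1}$, where I write $R_j := \sum_{i \geq j} q_i/(E^i+1)$. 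From \eqref{Lambda}, $\sum_{k=1}^{j}\Lambda(k) = E + (E-1)\sum_{k=2}^{j}E^{k-1} = E^j$, so the balance reduces to
\[
\frac{q_j}{E^j+1} \;=\; E^j\, R_{j+1} \qquad (j \geq 1).
\]

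Combining this with the telescoping identity $R_j = q_j/(E^j+1) + R_{j+1}$ and simplifying gives $R_j = q_j/E^j$ for every $j \geq 1$. (The $j=1$ instance $R_1 = q_1/E$ also follows directly from the first stationarity equation $q_1 = E\,R_1$.) Substituting $R_{j+1} = q_{j+1}/E^{j+1}$ back into the balance equation immediately produces the desired recurrence $q_{j+1} = E\,q_j/(E^j+1)$. To pin down $q_1$, rewrite the recurrence as $q_i/(E^i+1) = q_{i+1}/E$, which converts $R_1$ into a telescoping tail sum:
\[
\frac{q_1}{E} \;=\; R_1 \;=\; \sum_{i \geq 1}\frac{q_i}{E^i+1} \;=\; \frac{1}{E}\sum_{i \geq 2} q_i \;=\; \frac{1-q_1}{E},
\]
using the stochasticity $\sum_i q_i = 1$. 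This forces $q_1 = 1-q_1$, hence $q_1 = 1/2$, and together with the recurrence the vector $q$ is uniquely determined.

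The main obstacle I expect is not the algebra itself but spotting the flux-balance shortcut: expanding $q_j = \sum_i q_i\, s_{ij}$ naively forces one to wrangle an infinite $\Lambda(j)$-tail in every equation, and it is the identity $\sum_{k=1}^j \Lambda(k) = E^j$ coupled with the cut-based bookkeeping that makes everything collapse to a clean two-term recurrence.
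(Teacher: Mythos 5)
Your uniqueness argument is correct and takes a genuinely different route from the paper. The paper first inverts the stationarity equations into the three-term recurrence $q_{k+1}=\frac{E^k+E+1}{E^k+1}q_k-\frac{E}{E^{k-1}+1}q_{k-1}$, obtains the base case $q_2=\frac{E}{E+1}q_1$ by summing that recurrence, telescoping, and letting $n\to\infty$ (using absolute convergence to kill the boundary term), and then runs an induction; you instead go straight to the probability-flux balance across the cut $\{1,\dots,j\}\mid\{j+1,\dots\}$, where the special structure of $S$ (only one superdiagonal entry) collapses the outflow to the single term $q_j s_{j,j+1}$, and the identity $\sum_{k=1}^j\Lambda(k)=E^j$ collapses the inflow to $E^jR_{j+1}$. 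This buys you a uniform two-term relation for every $j$ at once, with no separate base case and no limit argument. Two points deserve attention. First, the cut identity is not free for an infinite chain: you should justify it by summing the stationarity equations $q_k=\sum_i q_i s_{ik}$ over $k\le j$, applying Tonelli, and using row-stochasticity of $S$ (which itself rests on $\sum_{k\le i}\Lambda(k)=E^i$) together with finiteness of $\sum_{k\le j}q_k$ to cancel; this is standard but should be stated. Second, and this is the one genuine gap: you only prove that any stationary $q$ must equal the displayed vector, not that the displayed vector is stationary. The theorem's phrasing (``the only stationary distribution'') includes existence, and the paper devotes the last part of its proof to verifying $qS=q$ coordinatewise; this direction is also used downstream (e.g.\ in Lemma~\ref{qazq}, where $q'=q\Rightarrow q'S=q'$ is invoked). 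In your framework the fix is short: since the cut conditions for all $j$ are jointly equivalent to the full system $q=qS$, it suffices to check that the vector defined by the recurrence satisfies $\sum_k q_k=1$ and $R_{j+1}=q_j/(E^j(E^j+1))$ for all $j$, which follows from the tail identity $\sum_{i>j}q_i=q_j/E^{j-1}$ proved by the same telescoping you already use for $j=1$.
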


\begin{proof}
Let us pick any stochastic infinite dimensional row vector $q = (q_1, q_2, ...)$. Let $qS^n = q^{(n)} = (q^{(n)}_1, q^{(n)}_2,...)$. We have the following recursion for the first coordinate:
\begin{equation}\label{Recur0}
q_1^{(n)} = \sum_{i=1}^\infty \frac{\Lambda(1)}{E^i + 1}q_i^{(n-1)}.
\end{equation}
For $k \geq 2$ we have the following:
\begin{equation} \label{Recur1}
q_k^{(n)}=\frac{1}{E^{k-1}+1}q_{k-1}^{(n-1)}+\sum_{i=k}^{\infty} \frac{\Lambda(k)}{E^i+1}q_i^{(n-1)}.
\end{equation}
We can deduce from \eqref{Recur0} and \eqref{Recur1} the following formulae for the coordinates of $q_k^{(n)}$:
\begin{equation}\label{Recur02}
q_2^{(n)} = \frac{\Lambda(2)}{\Lambda(1)}\Big(q_1^{(n)} - \frac{\Lambda(1)}{E+1}q_1^{(n-1)} \Big) + \frac{1}{E+1}q_1^{(n-1)},
\end{equation}
\begin{equation} \label{Recur2}
q_{k+1}^{(n)}=\frac{\Lambda(k+1)}{\Lambda(k)}\Big( q_{k}^{(n)}-\frac{1}{E^{k-1}+1}q_{k-1}^{(n-1)}-\frac{\Lambda(k)}{E^k+1}q_{k}^{(n-1)} \Big)+\frac{1}{E^k+1}q_k^{(n-1)} \text{ for } k\geq 2.
\end{equation}
For a stationary distribution $q_k^{(n)}=q_k^{(m)}$, so we can omit the upper index in the recursion. If $k>1$, then by \eqref{Lambda} and \eqref{Recur2} for a stationary distribution:
\begin{equation} \label{Recur3}
q_{k+1}=\frac{E^k+E+1}{E^k+1}q_k-\frac{E}{E^{k-1}+1}q_{k-1}.
\end{equation}
Let us sum \eqref{Recur3} from $k=2$ to $k=n-1$. We get the following:
\[
\sum_{k=3}^n q_k = \sum_{k=3}^{n-1} q_k + \frac{E}{E^{n-1}}q_{n-1}+q_2-\frac{E}{E+1}q_1.
\]
Let $n$ go to infinity. Since the vector is stochastic its coordinates form an absolutely convergent series:
\[
\sum_{k=3}^{\infty} q_k = \sum_{k=3}^{\infty} q_k +q_2-\frac{E}{E+1}q_1.
\]
\[
\frac{E}{E+1}q_1 =  q_2. 
\]

From here the recursion in the proposition is satisfied for $k=1$. Let us suppose that it is satisfied for $k=n-1$, thus $\frac{E}{E^{n-1} + 1}q_{n-1} = q_n$. We substitute this into \eqref{Recur3}. We obtain the following:
\begin{equation} \label{itt}
q_{n+1} = \frac{E^n+E+1}{E^n+1}q_n-\frac{E}{E^{n-1}+1}q_{n-1} = \frac{E^n+E+1}{E^n+1}q_n- q_{n} = \frac{E}{E^{n} + 1}q_{n}.
\end{equation}
Hence the recursion is satisfied for $k=n$, thus the recursion follows by induction for all $k$. We only need to find the value of $q_1$. For this we sum the elements of the distribution in terms of $q_1$.\medskip

Let $q_{k+1}=a_{k+1}q_1$. Since by \eqref{itt} we have proven the recursion of $q_k$, it follows that:
\[
a_1 = 1,\quad a_{k+1}= \frac{E}{E^k+1} a_k=\frac{E^k}{\prod_{i=1}^{k}E^i+1}.
\]

Next we show by induction that $\sum_{k=1}^n a_k=2-\frac{a_n}{E^{n-1}}$. For $n=1$ the statement is $1 = 2 - 1$. If the statement is true for $n \in \mathbb{Z}^+$, then: 
$$\sum_{k=1}^{n+1}a_k = 2-\frac{a_n}{E^{n-1}} + a_{n+1} = 2+ \Big(1 - \frac{E^n + 1}{E^n}\Big)a_{n+1}.$$
Thus it is true for $n+1$. This implies that $\sum_{k=1}^\infty a_k = 2$. Since $\sum_{k=1}^\infty q_k = 1$ and $q_k = a_kq_1$ we have that the value of $q_1$ must be $\nicefrac{1}{2}$ independent of $E$.\medskip

The only remaining statement to verify is that the found $q = (q_0,q_1,...)$ truly satisfies $qS=q$. We return to the notation that $q^{(n)} = q^{(n-1)}S$. We show coordinatewise that if $q^{(n-1)} = q$ then $q^{(n)} = q$.\medskip

For the first coordinate by \eqref{Recur0}, we have that: 
\begin{equation}
q_1^{(n)} = \sum_{i=1}^{\infty} \frac{\Lambda(1)}{E^{i}+1} q_i = \sum_{i=1}^{\infty} \frac{E}{E^{i}+1} a_i q_1 =q_1 \sum_{i=1}^{\infty} a_{i+1} = q_1(2-a_1) = q_1.
\end{equation}

For the second coordinate, by \eqref{Recur02} we have the following:
\begin{equation}
q_2^{(n)} = \frac{(E-1)E}{E}\Big(q_1^{(n)} - \frac{E}{E+1}q_1^{(n-1)} \Big) + \frac{1}{E+1}q_1^{(n-1)} = \frac{E}{E+1}q_1^{(n-1)} = q_2^{(n-1)}.
\end{equation}

For the remaining coordinates let us assume that for all $i \leq k$ for some $2\leq k$ we have that $q_i^{(n)}=q_i^{(n-1)} = q_i$. We prove that in this case $q_{k+1}^{(n)}=q_{k+1}^{(n-1)} = q_{k+1}$. By these assumptions from \eqref{Recur2} we arrive to a formula which is almost identical to \eqref{Recur3}. We get the following:
\begin{equation}
q_{k+1}^{(n)}=\frac{E^k+E+1}{E^k+1}q_k^{(n-1)}-\frac{E}{E^{k-1}+1}q_{k-1}^{(n-1)}.
\end{equation}
It has already been established that $q$ satisfies this recursion. Thus $q$ is truly the only stationary distribution of this Markov chain.
\end{proof}

From here we can find the only possible stationary distribution of $P$ as well. Let us generalize it as well.

\begin{defin}
Let $E\in \mathbb{Z}_{\geq 2}$. Let us define the function $\ell:\: \mathbb{R}_{\geq 0} \times \mathbb{Z}_{\geq 2} \rightarrow \mathbb{N}$ as follows:\[
\ell(x,E) = \inf\{n\in \mathbb{Z}^+:\: x \leq E^n \}.
\]
If $E=4$, then the original definition of $\ell$ still applies. From now on, by $\ell(x)$ we mean $\ell(x,4)$.  Let us accordingly generalize the matrix $P = (p_{ij})$.
\begin{equation} \label{transmatrix}
p_{ij} =
\begin{cases}
		\frac{1}{E^{ \ell (i, E) }+1} & \text{if } j \in [1, E^{\ell(i,E)}+1]\\
		0 & \text{otherwise. }
	\end{cases}
\end{equation}
\end{defin}

\begin{defin}
Let $L_1^E = [0,E]$ and $L_n = [E^{n-1}, E^n]$, moreover let $\mathcal{L}_n^E =\{k\in \mathbb{Z}^+: I_k \subseteq L_n\}$.
\end{defin}

These definitions were constructed such that $i \in \mathcal{L}_{\ell(i,E)}^E$ and that $\lambda(L_n^E) = \Lambda(n)$ for any $E \in \mathbb{Z}_{\geq 2}$.

\begin{lemma} \label{c8}
For fixed $i,n \in \mathbb{Z}^+$ the following equation holds:
\begin{equation}
\sum_{j \in \mathcal{L}_n^E} p_{ij} = s_{\ell (i, E)n}.
\end{equation}
\end{lemma}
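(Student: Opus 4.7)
The plan is to unpack both sides and reduce the identity to a counting problem. Fix $i$ and set $\ell := \ell(i,E)$, so by \eqref{transmatrix} the row $(p_{ij})_j$ is the indicator of $\{1,2,\dots,E^\ell+1\}$ scaled by the constant $\frac{1}{E^\ell+1}$. Hence
\[
\sum_{j \in \mathcal{L}_n^E} p_{ij} \;=\; \frac{\bigl|\mathcal{L}_n^E \cap \{1,\dots,E^\ell+1\}\bigr|}{E^\ell+1},
\]
so the entire lemma amounts to computing the cardinality of this intersection and checking it matches $\Lambda(n)$, $1$, or $0$ according to the three cases in the definition \eqref{Smatrix} of $s_{\ell n}$.

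First I would identify $\mathcal{L}_n^E$ explicitly in terms of integer endpoints. Since $I_k = [k-1,k]$, the containment $I_k \subseteq L_n^E$ gives $\mathcal{L}_1^E = \{1,\dots,E\}$ and $\mathcal{L}_n^E = \{E^{n-1}+1,\dots,E^n\}$ for $n \geq 2$. In particular $|\mathcal{L}_n^E| = \Lambda(n)$ by \eqref{Lambda}, which matches the numerator in the first branch of \eqref{Smatrix}.

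Then I would split into four cases according to the relation between $n$ and $\ell$. If $n \leq \ell$ (combining the subcases $n<\ell$ and $n=\ell$), every element of $\mathcal{L}_n^E$ is at most $E^n \leq E^\ell < E^\ell+1$, so $\mathcal{L}_n^E \subseteq \{1,\dots,E^\ell+1\}$ and the intersection has cardinality $\Lambda(n)$, giving $\Lambda(n)/(E^\ell+1) = s_{\ell n}$. If $n = \ell+1$, then $\mathcal{L}_{\ell+1}^E$ starts at $E^\ell+1$ and the only element $\leq E^\ell+1$ is $E^\ell+1$ itself, yielding cardinality $1$ and the value $1/(E^\ell+1) = s_{\ell,\ell+1}$. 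If $n \geq \ell+2$, then the minimum of $\mathcal{L}_n^E$ is $E^{n-1}+1 \geq E^{\ell+1}+1 > E^\ell+1$, so the intersection is empty and the sum is $0 = s_{\ell n}$.

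The proof is essentially bookkeeping once the sets $\mathcal{L}_n^E$ are written down. The only delicate point I expect to have to watch is the $n=1=\ell$ boundary, where $\mathcal{L}_1^E = \{1,\dots,E\}$ has a different form from the $n\geq 2$ family but still has cardinality $\Lambda(1) = E$, so the formula continues to hold uniformly; everything else is a straightforward comparison of integer endpoints.
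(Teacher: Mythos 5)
Your proof is correct and follows essentially the same route as the paper's: both split into the cases $n \leq \ell(i,E)$, $n = \ell(i,E)+1$, and $n > \ell(i,E)+1$ and match the resulting sum against the three branches of \eqref{Smatrix}. Your phrasing via the cardinality of $\mathcal{L}_n^E \cap \{1,\dots,E^{\ell}+1\}$ is just a slightly more explicit bookkeeping of the same computation, and your identification of $\mathcal{L}_n^E$ and the boundary check at $n=\ell=1$ are accurate.
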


\begin{proof}
For this part recall the definition of $S$ by \eqref{Smatrix}. For $n > \ell(i,E) + 1$ we always have that $j > E^{\ell(i,E)}+1$, thus $\sum_{j \in \mathcal{L}_n^E} p_{ij} = 0 = s_{\ell (i, E)n}$. For $n = \ell(i,E) + 1$ we only get a non zero transition probability $p_{ij}$ if $j = E^{\ell(i,E)}+1$, thus $\sum_{j \in \mathcal{L}_n^E} p_{ij} = \frac{1}{E^{\ell(i,E)}+1} = s_{\ell (i, E)n}$. For $n \leq \ell(i,E)$ we have that for all $j \in \mathcal{L}_n^E$ the value of $p_{ij}$ is the same, thus $\sum_{j \in \mathcal{L}_n^E} p_{ij} = \frac{\Lambda(n)}{E^{\ell(i,E)}+1} = s_{\ell (i, E)n}$.
\end{proof}

\begin{lemma} \label{qazq}
Suppose that for a stochastic countably infinite dimensional row vector $p = (p_1, p_2, ...)$ we have that $p = pP$. If $q_n' = \sum_{i \in \mathcal{L}_n^E} p_i$, then for $q' = (q_1', q'_2...)$ we have that $q' = q$.
\end{lemma}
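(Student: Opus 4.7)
The plan is to verify that $q'$ is a stochastic stationary distribution of $S$, and then invoke the uniqueness result in Theorem \ref{Qstac} to conclude $q' = q$. This is a short argument once the bookkeeping is set up, and the main content is essentially a reindexing of the identity $p = pP$ using the fact that $\{\mathcal{L}_n^E\}_{n \geq 1}$ partitions $\mathbb{Z}^+$.

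First I would check stochasticity. Since the sets $\mathcal{L}_n^E$ partition $\mathbb{Z}^+$, one has
\[
\sum_{n=1}^\infty q'_n \;=\; \sum_{n=1}^\infty \sum_{i \in \mathcal{L}_n^E} p_i \;=\; \sum_{i=1}^\infty p_i \;=\; 1,
\]
so $q'$ is a stochastic countably infinite row vector.

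Next I would show that $q' = q' S$. For any fixed $n$, using that every $i \in \mathbb{Z}^+$ lies in exactly one $\mathcal{L}_m^E$ (namely $m = \ell(i,E)$), Lemma \ref{c8} gives
\[
(q' S)_n \;=\; \sum_{m=1}^\infty q'_m s_{mn} \;=\; \sum_{m=1}^\infty \sum_{i \in \mathcal{L}_m^E} p_i\, s_{\ell(i,E)\, n} \;=\; \sum_{i=1}^\infty p_i \sum_{j \in \mathcal{L}_n^E} p_{ij}.
\]
Interchanging the two (non-negative) sums and then using $p = pP$ coordinatewise,
\[
(q' S)_n \;=\; \sum_{j \in \mathcal{L}_n^E} \sum_{i=1}^\infty p_i\, p_{ij} \;=\; \sum_{j \in \mathcal{L}_n^E} (pP)_j \;=\; \sum_{j \in \mathcal{L}_n^E} p_j \;=\; q'_n.
\]
Hence $q' S = q'$.

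Finally, since $q'$ is a stochastic stationary distribution of the Markov chain defined by $S$, Theorem \ref{Qstac} asserts that such a distribution is unique, so $q' = q$. There is no real obstacle here; the only subtle point is the interchange of sums, which is justified by Tonelli because every term is non-negative, and the use of Lemma \ref{c8} to convert the aggregated one-step probability on $P$ into a single entry of $S$.
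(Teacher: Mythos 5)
Your proof is correct and follows essentially the same route as the paper: show $q'$ is stochastic, verify $q'S = q'$ via Lemma \ref{c8} and an interchange of non-negative sums, and conclude by the uniqueness in Theorem \ref{Qstac}. The only difference is cosmetic --- you expand $(q'S)_n$ and recover $q'_n$, whereas the paper starts from $q'_n = \sum_{i\in\mathcal{L}_n^E}(pP)_i$ and regroups to reach $\sum_k q'_k s_{kn}$; the underlying identity is the same.
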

\begin{proof}
We have that $q'$ is a stochastic countably infinite dimensional row vector, thus $q' = q$ is equivalent to $q'S = q'$ by Theorem \ref{Qstac}. We show that $q'$ is a stationary distribution of $S$. Since $p$ is a stationary distribution of $P$ we have the following:
\begin{equation} \label{ineqout}
p_j = \sum_{i=1}^\infty p_i p_{ij}.
\end{equation}
Thus for all $n \in \mathbb{Z}^+$:
\begin{equation} \label{3sum}
\sum_{i \in \mathcal{L}_n^E} p_i = \sum_{i \in \mathcal{L}_n^E} \sum_{j=1}^\infty p_j p_{ji} = \sum_{i \in \mathcal{L}_n^E} \sum_{k=1}^\infty \sum_{j \in \mathcal{L}_k^E}p_j p_{ji}.
\end{equation}
If $a,b \in \mathcal{L}_k^E$ for some $k \in \mathbb{Z}^+$, then $p_{ai} = p_{bi}$ for all $i$. From this and from the definition of $q'$ we obtain the following from \eqref{3sum}:
\begin{equation}
 q'_n = \sum_{i \in \mathcal{L}_n^E} p_i = \sum_{i \in \mathcal{L}_n^E} \sum_{k=1}^\infty p_{E^ki} \sum_{j \in \mathcal{L}_k^E}p_j = \sum_{i \in \mathcal{L}_n^E} \sum_{k=1}^\infty p_{E^ki} q'_k.
\end{equation}
Since all sums are absolutely convergent we can flip the order of them. By Lemma \ref{c8}:
\begin{equation}
q'_n = \sum_{k=1}^\infty q'_k \sum_{i \in \mathcal{L}_n^E} p_{E^ki} = \sum_{k=1}^\infty q'_k s_{kn}. 
\end{equation}
By this $q' = q'S$, thus $q=q'$.
\end{proof}

\begin{theorem}
The only possible stochastic infinite dimensional row vector $p = (p_1, p_2, ...)$, for which $p = pP$ is given by:\[
p_k = 
\begin{cases}
 \frac{1}{2E} & \text{if } k \in \{1,...,E+1\}\\
 \frac{1}{E^{n}}q_{n} & \text{otherwise, if } n>1 \text{ and } k \in \{E^{n-1}+2,..., E^n+1\}. 
\end{cases}
\]
This is equivalent to saying that it is the only stationary distribution of the Markov chain defined by $P$.
\end{theorem}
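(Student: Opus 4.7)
The plan is to leverage two facts already available: (i) by Lemma \ref{qazq}, every stationary distribution $p$ of $P$ must satisfy $\sum_{i\in\mathcal{L}_k^E} p_i = q_k$ for the $q_k$ of Theorem \ref{Qstac}; and (ii) by \eqref{transmatrix}, the entry $p_{ij}$ depends on $i$ only through $\ell(i,E)$, equalling $\frac{1}{E^k+1}$ on all of $\mathcal{L}_k^E$ when $j \leq E^k+1$ and vanishing otherwise. Combining these collapses the infinite system $p=pP$ to a single scalar series in $j$, and a short telescoping argument then yields the claimed formula.

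More precisely, writing $n(j)$ for the smallest positive integer with $E^{n(j)}+1\geq j$, I would fix $j$ and group the stationarity equation by $k = \ell(i,E)$ to obtain
\begin{equation*}
p_j = \sum_{k=1}^\infty \sum_{i\in\mathcal{L}_k^E} p_i\, p_{ij} = \sum_{k\geq n(j)} \bigg(\sum_{i\in\mathcal{L}_k^E} p_i\bigg) \frac{1}{E^k+1} = \sum_{k\geq n(j)} \frac{q_k}{E^k+1}.
\end{equation*}
Unpacking $n(j)$ gives $n(j) = 1$ on $\{1,\dots,E+1\}$ and $n(j) = n$ on $\{E^{n-1}+2,\dots,E^n+1\}$ for $n \geq 2$, so $p_j$ is already forced to be constant on exactly the blocks appearing in the conclusion.

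Next, the recursion of Theorem \ref{Qstac} can be rewritten as $\frac{q_k}{E^k+1} = \frac{q_{k+1}}{E}$, so the tail above telescopes to $\frac{1}{E}\bigl(1 - \sum_{k=1}^{n(j)} q_k\bigr)$. The identity $\sum_{k=1}^n q_k = 1 - \frac{q_n}{E^{n-1}}$ is already established inside the proof of Theorem \ref{Qstac} (by induction on the partial sums of $a_k = q_k/q_1$), so this expression simplifies to $\frac{q_n}{E^n}$; for $n=1$ it gives $\frac{q_1}{E} = \frac{1}{2E}$, and for $n\geq 2$ it matches the stated formula verbatim. Uniqueness is then complete.

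For existence it suffices to verify that the proposed $p$ is stochastic. The block $\{1,\dots,E+1\}$ contributes $\frac{E+1}{2E}$, while the block $\{E^{n-1}+2,\dots,E^n+1\}$ of cardinality $\Lambda(n) = (E-1)E^{n-1}$ contributes $\frac{E-1}{E} q_n$ for each $n\geq 2$; summing over $n\geq 2$ and using $\sum_{n\geq 2} q_n = 1 - q_1 = \frac{1}{2}$ gives total $\frac{E+1}{2E} + \frac{E-1}{E}\cdot\frac{1}{2} = 1$. The main bookkeeping subtlety, and the only point requiring care, is that the blocks on which $p_j$ is constant are shifted by one relative to the $\ell$-blocks $\mathcal{L}_n^E$; with the correct indexing this is the only non-routine step, and everything else reduces to arithmetic already available from the proof of Theorem \ref{Qstac}.
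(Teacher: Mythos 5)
Your uniqueness argument is correct and in fact somewhat cleaner than the paper's: grouping the stationarity equation by $\ell(i,E)$, invoking Lemma \ref{qazq} for the block sums, and telescoping via $q_k/(E^k+1)=q_{k+1}/E$ together with $\sum_{k=1}^n q_k = 1-q_n/E^{n-1}$ yields $p_j = q_{n(j)}/E^{n(j)}$ in one pass, whereas the paper first uses the symmetry $p_{ia}=p_{ib}$ to get constancy on blocks and then solves separately for the boundary coordinate $p'=p_{E^{n-1}+1}$ via the difference $p'-p$. Your block bookkeeping ($n(j)=n$ exactly on $\{E^{n-1}+2,\dots,E^n+1\}$) and the stochasticity computation are both right.

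The gap is in the existence half: it is not true that ``it suffices to verify that the proposed $p$ is stochastic.'' Your uniqueness derivation only shows that \emph{if} a stochastic fixed vector of $P$ exists, it must be this one; it does not show that this candidate actually satisfies $p=pP$. You cannot get existence from positive recurrence, because the paper establishes positive recurrence \emph{from} this stationary distribution, and you cannot apply Lemma \ref{qazq} to the candidate, since that lemma assumes $p=pP$ as a hypothesis. What is missing (and what the paper's proof supplies in its final part) is a direct verification that $(pP)_j=p_j$. In your framework this is short: check by hand that the $\mathcal{L}$-block sums of the candidate equal $q_n$, namely $\sum_{i\in\mathcal{L}_n^E}p_i = \frac{q_{n-1}}{E^{n-1}} + (E^n-E^{n-1}-1)\frac{q_n}{E^n} = q_n$ using $q_{n-1}=\frac{E^{n-1}+1}{E}q_n$ (note this grouping differs from the one in your stochasticity check precisely because of the one-index shift you flag), and then your own regrouping identity gives $(pP)_j=\sum_{k\ge n(j)}q_k/(E^k+1)=q_{n(j)}/E^{n(j)}=p_j$. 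With that one computation added, the proof is complete.
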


\begin{proof}
We suppose, that $p$ is a stationary distribution, thus \eqref{ineqout} is true. We can deduce by \eqref{transmatrix} that if $a,b \in \{1,...,E+1\}$ or $a,b \in \{E^{k-1}+2, E^k+1\}$ for some $k \in \mathbb{Z}^+$, then $p_a = p_b$ must be true since $p_{ia}=p_{ib}$ for all $i$ in this case. By Lemma \ref{qazq} we know that:
\begin{equation} \label{a8}
q_n = \sum_{k=E^{n-1}+1}^{E^n} p_k = \sum_{k \in \mathcal{L}_n^E} p_k.
\end{equation}
By this $p_1 =...=p_{E+1} = \frac{1}{2E}$.\medskip

Now let us consider a step, other than the first one. Let $p'=p_k$ where $k=E^{n-1}+1$ for some $n \in \mathbb{Z}^+$. Let $p = p_{k+1} = ... = p_{E^{n}}$. From \eqref{transmatrix} we have that for $k=E^{n-1}+1$ and any $i \in \mathbb{Z}^+$ the transition probabilities $p_{i(k+1)} = p_{ik}$ if and only if $i \notin \mathcal{L}_{n-1}^E$. If $i \in \mathcal{L}_{n-1}^E$, then $p_{ik} = \frac{1}{E^{n-1}+1}$ and $p_{i(k+1)}=0$. By this, and by applying \eqref{ineqout} we obtain:
\begin{equation}
p' - p = \sum_{i=1}^\infty p_i p_{ik} - \sum_{i=1}^\infty p_i p_{i(k+1)} = \sum_{i \in \mathcal{L}_{n-1}^E} \frac{p_{i}}{E^{n-1}+1} \text{.}
\end{equation}
Hence by \eqref{a8}:
\begin{equation} \label{prtty1}
p'=p+\frac{1}{E^{n-1}+1}q_{n-1}.
\end{equation}
By \eqref{prtty1}, by \eqref{a8} and by Theorem \ref{Qstac} we obtain:
\begin{equation}
q_n=\frac{E}{E^{n-1}+1}q_{n-1}=(E-1)E^{n-1}p + \frac{1}{E^{n-1}+1}q_{n-1}.
\end{equation}
By simple algebra we get $p = \frac{1}{E^{n-1}(E^{n-1}+1)}q_{n-1} = \frac{1}{E^n}q_n$ and $p'=\frac{1}{E^{n-1}}q_{n-1}$. If $n > 2$, then by Theorem \ref{Qstac} $p' = \frac{1}{E^{n-2}(E^{n-2}+1)}q_{n-2} = \frac{1}{E^{n-1}}q_{n-1}$, if $n=2$, then $p'=\frac{1}{2E}$. Now we have proven that the stationary distribution of the Markov chain has to be the one we have described in our statement. We also verify that it is really a stationary distribution.\medskip

Let us again use the notation that $p^{(n-1)} = p$ and $p^{(n)} = p^{(n-1)}P$. First assume that $k \in \{1,...,E+1\}$. From \eqref{transmatrix} and \eqref{Recur0} we have the following:
$$p_k^{(n)} = \sum_{i=1}^\infty \frac{1}{E^{\ell(i)}+1} p_i^{(n-1)} = \sum_{i=1}^\infty \frac{1}{E^{i}+1}q_i = \frac{1}{E} q_1 = \frac{1}{2E}.$$
For a given $n > 1$ by \eqref{transmatrix} and \eqref{Recur1} we have for all $k \in \{E^{n-1}+2,...,E^n + 1\}$ the following:
\begin{multline}
p_k^{(n)} = \sum_{i = E^{n-1}+1}^\infty \frac{1}{E^{\ell(i)}+1} p_i^{(n-1)} = \sum_{i=n}^\infty \frac{1}{E^i + 1} q_i = \frac{1}{\Lambda(n)} \Big( q_n - \frac{1}{E^{n-1}+1} q_{n-1}\Big) \\
= \frac{1}{(E-1)E^{n-1}} \Big( \frac{E}{E^{n-1} + 1} - \frac{1}{E^{n-1} + 1}\Big)q_{n-1} = \frac{1}{E^{n-1}(E^{n-1}+1)}q_{n-1} = p_k^{(n-1)}.
\end{multline}
This proves the whole statement.
\end{proof}

According to Theorem \ref{nagyhatar} this stationary distribution, for $E=4$ gives an invariant measure $\pi:\:\mathscr{L}(\mathbb{R}_{\geq0}) \rightarrow \mathbb{R}_{\geq 0}$ for the system $(\mathbb{R}_{\geq 0}, T)$. For this measure if $\mu$ is an absolutely continuous measure such that $\mu(\mathbb{R}_{\geq 0})<\infty$, then $\lim_{n\rightarrow \infty}\mu(f^{-n}(H)) =\mu(\mathbb{R}_{\geq 0})\pi(H)$ for all $H \in \mathscr{L}( \mathbb{R}_{\geq 0} )$. This also means that the system is mixing from Corollary \ref{mix}. Since $\pi$ is such that $\pi(H) = 0$ implies $\lambda(H) = 0$ for measurable subsets of the domain, from theorems of Section \ref{sec3} the only absolutely continuous invariant measures of the system are those which are constant multiples of $\pi$, or if we consider non-$\sigma$-finite measures, infinite for all non-null sets.

\begin{cor} \label{solution}
The orbit of almost every point is dense in $\mathbb{R}_{\geq 0}$, thus for almost every $x \in \mathbb{R}_{\geq 0}$ we have that $\liminf_{k\rightarrow \infty}(f^k(x)) = 0$ and $\limsup_{k\rightarrow \infty}(f^k(x)) = \infty$.
\end{cor}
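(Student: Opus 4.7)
The plan is to upgrade the mixing property of $(\mathbb{R}_{\geq 0}, T, \pi)$ proved in Corollary \ref{mix} to orbit density almost surely, via ergodicity and the fact that the explicit stationary distribution computed above has strictly positive coordinates.

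First I would check that $\pi$ and $\lambda$ share the same null sets on $\mathbb{R}_{\geq 0}$. From the closed formula for $p_k$ obtained in the previous theorem, $p_k > 0$ for every $k$, and $0 < \lambda(I_k) < \infty$, so the density $\tau(x) = p_k/\lambda(I_k)$ of $\pi$ is strictly positive and finite on every integer interval $I_k$. Hence $\pi(H) = 0$ if and only if $\lambda(H) = 0$ for every $H \in \mathscr{L}(\mathbb{R}_{\geq 0})$; in particular $\pi(U) > 0$ for every non-empty open $U \subseteq \mathbb{R}_{\geq 0}$, so the support of $\pi$ is the whole half-line.

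Next, since mixing implies ergodicity, I would fix a countable basis $\{U_m\}_{m\geq 1}$ of the topology of $\mathbb{R}_{\geq 0}$ (for instance open intervals with rational endpoints intersected with $\mathbb{R}_{\geq 0}$) and apply Birkhoff's ergodic theorem to $\chi_{U_m}$ in the probability-preserving system $(\mathbb{R}_{\geq 0}, T, \pi)$. For each $m$ there is a $\pi$-null, hence $\lambda$-null, set $N_m$ outside of which
\[
\lim_{n\to\infty}\frac{1}{n}\sum_{k=0}^{n-1}\chi_{U_m}(T^k(x)) = \pi(U_m) > 0,
\]
so in particular the orbit of $x$ visits $U_m$ for infinitely many $k$. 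Setting $N := \bigcup_m N_m$, the set $N$ is still a Lebesgue null set, and every $x \in \mathbb{R}_{\geq 0}\setminus N$ has an orbit meeting every element of the basis, hence an orbit dense in $\mathbb{R}_{\geq 0}$.

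Finally, for any such $x$ the density of the orbit in $\mathbb{R}_{\geq 0}$ immediately gives $\liminf_{k\to\infty} T^k(x) = 0$ (the orbit enters every neighbourhood of $0$) and $\limsup_{k\to\infty} T^k(x) = \infty$ (the orbit enters every interval $(M,\infty)$). I do not expect any serious obstacle here; the only items to be careful about are the mutual absolute continuity of $\pi$ and $\lambda$, which follows from positivity and finiteness of the explicit $p_k$, and the standard transfer from $\pi$-a.e.\ to $\lambda$-a.e.\ statements, which is free once the two measures are known to share null sets.
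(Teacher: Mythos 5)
Your argument is correct, but it takes a genuinely different route from the paper's. The paper stays entirely inside the mixing property it has just established: for a fixed integer interval $I_n$ and a set $A$ of positive measure it uses mixing to find a time at which a definite proportion of $A$ lands in $I_n$, removes those points, and iterates; the measures of the leftover sets $A_k$ decay geometrically, so the set of points of $A$ whose orbit never meets $I_n$ is null. You instead pass from mixing to ergodicity and invoke Birkhoff's ergodic theorem on the indicators of a countable basis of open sets, then take a countable union of null sets. Both arguments are sound. Your route leans on a larger external theorem (Birkhoff), but it delivers the full density statement directly and uniformly: each nonempty basic open set $U_m$ has $\pi(U_m)>0$ because the explicit stationary density is strictly positive and finite on every integer interval, and is therefore visited infinitely often by almost every orbit. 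The paper's exhaustion argument is more self-contained, but as written it is only carried out for the integer intervals $I_n$; to obtain genuine density (and in particular $\liminf_{k}T^k(x)=0$ rather than merely $\liminf_k T^k(x)\le 1$) one must rerun it on a countable family of finer sets, which your basis argument handles explicitly. Your preliminary observation that $\pi$ and $\lambda$ share null sets, coming from the positivity and finiteness of the $p_k$, is exactly the point the paper also relies on to transfer the $\pi$-a.e.\ conclusion to a Lebesgue-a.e.\ one.
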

\begin{proof}
The absolutely continuous measure $\pi$ satisfies all conditions needed for Theorem \ref{mix}, thus the system is mixing. Thus for any $A \in \mathscr{L}(\mathbb{R}_{\geq 0})$ for all $I_n$ there exists an $m \in \mathbb{N}$ such that $\pi (f^{-m}(I_n) \cap A) \geq \nicefrac{1}{2}(\pi(A)\pi(I_n))$. Let $A_0	 = A,\: A_k = A_{k-1} \setminus f^{-m_k}(I_n)$ where: $$m_k = \min \{m \in \mathbb{N}: \pi (f^{-m}(I_n) \cap A_{k-1}) > \frac{1}{2}(\pi(A_{k-1})\pi(I_n))\}.$$
Since $\pi(\mathbb{R}_{\geq 0})=1$ we have that $\pi(A_k) < \frac{\pi(I_n)}{2}\pi(A_{k-1}) < \pi(A_{k-1})$, thus $\lim_{k \rightarrow \infty}(\pi(A_k))=0$. Let $H \subset A$ be the set of those points whose orbit never intersects $I_n$. We have that $H \subseteq \bigcap_{k=0}^\infty A_k$ thus $\pi(H) = 0$ implying that $\lambda(H) = 0$.
\end{proof}

\addcontentsline{toc}{section}{Acknowledgements}
\section*{Acknowledgements}
I would like to express my deepest gratitude to my advisor Zoltán Buczolich, for proposing these intriguing problems and providing guidance throughout the preparation of this paper. I would also like to thank Henk Bruin for his insights on the paper and for suggesting relevant literature.

\bibliographystyle{unsrt}
\addcontentsline{toc}{section}{References}
\bibliography{hmm}

\end{document}